\undefined\usepackage{chngcntr}\fi
\newcommand{\mR}{\mathbb{R}}
\newcommand{\mE}{\mathbb{E}}
\newcommand{\mL}{\mathcal{L}}
\newcommand{\mF}{\mathcal{F}}
\def\K{\mathcal{K}}
\def\P{{\mathcal P}}
\def\L{{\mathcal L}}
\def\0{{\boldsymbol 0}}
\def\N{{\mathcal N}}
\def\F{{\mathcal F}}
\def\A{{\mathcal A}}
\def\B{{\mathcal B}}
\def\C{{\mathcal C}}
\newcommand{\1}{\boldsymbol{1}}
\def\bv{{\boldsymbol{v}}}
\def\bs{{\boldsymbol{s}}}
\def\b1{{\boldsymbol{1}}}
\def\bx{{\boldsymbol{x}}}
\def\bu{{\boldsymbol{u}}}
\def\bd{{\boldsymbol{d}}}
\def\blambda{{\boldsymbol{\lambda}}}
\def\barH{\bar{H}}
\def\bartau{\bar{\tau}}
\def\bs{{\boldsymbol{s}}}
\def \b0{{\boldsymbol{0}}}
\def\barg{{\bar{g}}}
\def\barf{{\bar{f}}}
\def\barH{{\bar{H}}}
\def\bnabla{{\bar{\nabla}}}
\def\bargamma{{\bar{\gamma}}}
\def\barblambda{{\bar{\boldsymbol{\lambda}}}}
\def\barmu{{\bar{\mu}}}
\def\bartau{{\bar{\tau}}}
\def\barepsilon{{\bar{\epsilon}}}
\def\barL{{\bar{\mL}}}
\def\hatmu{{\hat{\mu}}}
\def\barK{{\bar{K}}}
\def\barbzeta{\bar{\boldsymbol{\zeta}}}
\def\bzeta{\boldsymbol{\zeta}}
\newcommandx{\unsure}[2][1=]{\todo[linecolor=red,backgroundcolor=red!25,bordercolor=red,#1]{#2}}
\newcommandx{\change}[2][1=]{\todo[linecolor=blue,backgroundcolor=blue!25,bordercolor=blue,#1]{#2}}
\newcommandx{\info}[2][1=]{\todo[linecolor=OliveGreen,backgroundcolor=OliveGreen!25,bordercolor=OliveGreen,#1]{#2}}
\newcommandx{\improvement}[2][1=]{\todo[linecolor=Plum,backgroundcolor=Plum!25,bordercolor=Plum,#1]{#2}}
\algrenewcommand{\algorithmiccomment}[1]{\hfill$\blacktriangleright$ #1}
\algnewcommand{\LongComment}[1]{\hfill$\triangleright$ #1}
\begin{document}

\title{Trust-Region Sequential Quadratic Programming for Stochastic Optimization with Random Models}

\author[1]{Yuchen Fang}
\author[2]{Sen Na}
\author[3]{Michael W. Mahoney}
\author[4]{Mladen Kolar}

\affil[1]{Department of Mathematics, University of California, Berkeley}
\affil[2]{School of Industrial and Systems Engineering, Georgia Institute of Technology}
\affil[3]{Department of Statistics, University of California, Berkeley}
\affil[4]{Data Sciences and Operations Department, University of Southern California}

\date{}

\maketitle

\begin{abstract}
	
In this work, we consider solving optimization problems with a stochastic objective and deterministic equality constraints. We propose a Trust-Region Sequential Quadratic Programming method to find both \textit{first- and second-order} stationary points. Our method utilizes a random model to represent the objective function, which is constructed from stochastic observations of the objective and is designed to satisfy proper adaptive accuracy conditions with a high but fixed probability. To converge to first-order stationary points, our method computes a gradient step in each iteration defined by minimizing a quadratic approximation of the objective subject to a (relaxed) linear approximation of the problem constraints and a trust-region constraint.
To converge to second-order stationary points, our method additionally computes an eigen step to explore the negative curvature of the reduced Hessian matrix, as well as a second-order correction step to address the potential Maratos effect, which arises due to the~\mbox{nonlinearity}~of~the~problem constraints.~Such an effect may impede the method from moving away from saddle points.~Both~\mbox{gradient}~and~eigen~step~\mbox{computations}~\mbox{leverage}~a~novel~\textit{parameter-free}~decomposition of the step and the trust-region radius, accounting for the proportions among~the~feasibility residual, optimality residual, and negative curvature. We establish \textit{global almost sure} first- and second-order convergence guarantees for our method, and present computational \mbox{results}~on~CUTEst 
problems, regression problems, and saddle-point problems to demonstrate its superiority over~existing line-search-based stochastic methods.

\end{abstract}

\section{Introduction}\label{sec:1}

We consider constrained stochastic optimization problems of the form:
\begin{equation}\label{Intro_StoProb}
\min_{\bx\in\mR^d}\;f(\bx)=\mE_\P[F(\bx;\xi)],\quad\;\;\;\text{s.t.}\quad c(\bx)=\b0,
\end{equation}
where $f:\mR^d\rightarrow \mR$ is a stochastic objective, $F(\cdot;\xi):\mR^d\rightarrow \mR$ is its realization, $c:\mR^d\rightarrow\mR^m$ are deterministic equality constraints, $\xi$ is a random variable following the distribution $\P$,~and~the~expectation is taken over the randomness of $\xi$. 
Throughout the paper, we assume that the objective~value~$f(\bx)$, together with its gradient $\nabla f(\bx)$ and Hessian $\nabla^2 f(\bx)$, cannot be exactly evaluated, but can be~estimated based on the samples $\{\xi_i\}$. Constrained stochastic problems are ubiquitous in various~scientific and engineering fields, including optimal control \citep{Betts2010Practical}, reinforcement learning \citep{Achiam2017Constrained}, portfolio optimization \citep{Cakmak2005Portfolio}, supply chain network design \citep{Santoso2005stochastic}, and physics-informed neural networks \citep{Cuomo2022Scientific}.

Numerous methods have been proposed to solve constrained \textit{deterministic} optimization problems, including penalty methods, augmented Lagrangian methods, and sequential quadratic programming (SQP) methods. While each type of method exhibits promising performance under favorable~settings, SQP methods have undoubtedly been very successful for solving both small- and large-scale~problems, particularly when the problems suffer significant nonlinearity \citep{Bertsekas1982Constrained, Boggs1995Sequential, Nocedal2006Numerical}. For problems with a \textit{stochastic} objective, several Stochastic~SQP~(SSQP) methods have also been developed recently \citep{Berahas2021Sequential, Berahas2023Stochastic, Berahas2023Accelerating, Berahas2023Sequential, Curtis2023Almost, Curtis2023Stochastic, Curtis2023Sequential, Curtis2024Stochastic, Qiu2023sequential, Na2022adaptive, Na2022Statistical, Na2023Inequality, Fang2024Fully}. We defer a detailed literature review to Section \ref{sec:1.1}.~However, existing SSQP~methods~primarily focus on first-order convergence, where the KKT residual is shown to converge to zero. This indicates that the methods may converge to saddle points or local maxima, which violate the goal of \textit{minimizing} the objective and are less meaningful for many problems. For example, in the context of deep~learning, converging to first-order stationary points can result in high generalization errors \citep{Dauphin2014Identifying, Choromanska2015Loss}.$\;\quad\quad$

In this paper, we address the above concern by designing the \textit{first} SSQP method with second-order convergence guarantees. We term our method Trust-Region Sequential Quadratic Programming for STochastic Optimization with Random Models (TR-SQP-STORM), as a \mbox{generalization}~of~the~STORM method in \cite{Chen2017Stochastic} to \textit{constrained} problems with \textit{second-order} guarantees. Our method~has the following promising features.

\begin{enumerate}[label=\textbf{(\alph*)},topsep=0pt, wide]
\setlength\itemsep{0.0em}
\item TR-SQP-STORM employs random models to represent the objective $f$ and its gradient and Hessian, which are constructed from stochastic estimates of those quantities. The random models enforce the estimates to satisfy proper \textit{adaptive} accuracy conditions with a high but fixed probability in each iteration. Moreover, the random models do not presume any parametric \mbox{distribution}~for~the estimates and allow for biased estimates, thereby accommodating various problem~settings and sampling mechanisms. With random model framework, our method adaptively~updates the \mbox{trust-region}~radius based on the ratio between predicted and actual model reductions, in a manner similar to deterministic trust-region methods.~As such, our method does not input any prespecified~trust-region~radius~(or stepsize) sequences that significantly affect algorithm performance \citep[see, e.g.,][]{Berahas2021Sequential, Berahas2023Stochastic, Berahas2023Accelerating, Berahas2023Sequential, Curtis2024Stochastic, Fang2024Fully}.

\item TR-SQP-STORM performs a trial step of two types, either a gradient step or an eigen~step. The gradient step reduces the KKT residual, while the eigen step increases the negative curvature of the reduced Lagrangian Hessian --- essentially, moving away from saddle points or local maxima. Our step computation requires overcoming an \textit{infeasibility issue}, which arises from the potential contradiction between the linearized problem constraints and the trust-region~\mbox{constraint}.~To~\mbox{resolve}~this, we relax the constraint linearization with a \textit{parameter-free decomposition technique} for the step and trust-region radius, which is designed according to the proportions among the feasibility~\mbox{residual},~\mbox{optimality} residual, and negative curvature. The decomposition balances the goals of reducing the KKT residual (i.e., feasibility + optimality) and increasing the negative curvature, and enjoys a nice \textit{scale-invariant property}.

\item TR-SQP-STORM additionally computes a second-order correction (SOC) step to resolve the \textit{(second-order) Maratos effect}.~As noted in \cite{Byrd1987Trust}, the iterates for constrained problems can fail to move away from saddle points, regardless of the trial step length. This problematic issue often arises when the constraints have significant curvatures that counteract the curvature~of~the objective. Our computation of SOC steps and the criteria of their activation~are~designed to~accommodate the inherent randomness in estimation, ensuring effectiveness for stochastic problems.

\end{enumerate}

For the above method design, we establish global almost sure first- and second-order convergence guarantees. In particular, given that the merit function parameter stabilizes and the failure probability in random models is below a certain threshold, the iteration sequence will converge almost surely to first-order stationary points, with a subsequence converging to second-order \mbox{stationary}~points. This result corroborates the findings of \cite{Chen2017Stochastic} on first-order convergence and \cite{Blanchet2019Convergence} on second-order convergence of trust-region methods designed for \textit{unconstrained}~stochastic optimization. In the context of \textit{constrained} stochastic \mbox{optimization},~we~\mbox{contribute} to \mbox{existing}~\mbox{literature} in the following four aspects.
First, TR-SQP-STORM is the first stochastic method to achieve~second-order convergence. Second, the merit parameter in our analysis only requires to be stabilized, ensured by a boundedness condition. This substantially relaxes the conditions of existing SSQP methods that demand not only stabilized but also sufficiently large merit parameters. Extreme merit parameters rely on additional model assumptions. For example, \cite{Berahas2021Sequential, Berahas2023Stochastic, Berahas2023Accelerating, Curtis2024Stochastic} imposed symmetric assumptions on the noise distribution.
Third, due to~the~trust-region constraint, our SQP subproblems remain well-defined even with indefinite Lagrangian Hessian~approximations. In contrast, most existing SSQP methods are line-search-based, necessitating positive~definite Hessian approximations typically obtained with cumbersome computational costs (e.g., matrix factorization).
Fourth, compared to random models in \cite{Na2022adaptive, Na2023Inequality}, our design~is~significantly simplified, making implementation much easier (e.g., comparing \cite[(17), (22)]{Na2022adaptive} with \eqref{def:Ak}--\eqref{def:Ck}). 
We implement TR-SQP-STORM on
problems in the CUTEst set and on regression problems to demonstrate its superior performance over line-search-based methods in practice. 
We also investigate its capability to escape saddle points in a saddle-point problem, a feature not shared by other~existing methods.

\subsection{Literature review}\label{sec:1.1}

Stochastic SQP methods have been a focal point of operations research in recent years, with a series of papers reporting on algorithm designs and analyses.~Within this line of literature, two~primary~setups for estimating objective models are commonly discussed.

The first setup is the fully stochastic setup, where a single sample is accessed at each step. 
\cite{Berahas2021Sequential} designed the first SSQP method under this setup, utilizing the $\ell_1$ merit function and a prespecified sequence $\{\beta_k\}$ to determine suitable stepsizes. Subsequently, \mbox{several}~works have expanded on this method to relax various problem conditions. For example, \cite{Berahas2023Stochastic} introduced a method to handle rank-deficient constraint Jacobians; \cite{Berahas2023Accelerating} accelerated SSQP by applying the SVRG technique; \cite{Curtis2023Stochastic} proposed an interior-point method to solve bound-constrained problems; \cite{Curtis2023Sequential} incorporated deterministic inequality constraints into the algorithm design; and \cite{Curtis2024Stochastic} inexactly solved the SQP subproblems.~The methods developed above are all line-search-based,~where~the~search~\mbox{direction}~and~\mbox{stepsize}~are~\mbox{computed}~\mbox{separately}. 
As a complement, \cite{Fang2024Fully} designed the first \textit{trust-region} SSQP method to \mbox{compute}~the~search direction and stepsize jointly. That trust-region method does not rely on positive definite Hessian approximations to make subproblems well-posed, which is critical for exhibiting \mbox{promising}~\mbox{performance} when solving nonlinear problems.~In addition, (non-)asymptotic properties of SSQP methods~and~iteration complexities
have~also been established. See \cite{Curtis2023Almost, Curtis2023Worst, Na2022Statistical, Kuang2023Online, Lu2024Variance} and references therein.~Existing literature has shown~global~almost sure convergence of~SSQP iterates to first-order stationary points. 
In~line~with~this~\mbox{series}~of~works, our paper designs~a~\mbox{trust-region}~SSQP~scheme with second-order convergence guarantees. Unlike methods in the fully-stochastic setup, our method adaptively selects the batch size based on the iteration progress and updates the trust-region~radius~according to the ratio between predicted and actual model reductions, similar to deterministic~methods. This scheme does not input any sequence $\{\beta_k\}$ to prespecify the radius or stepsize, which significantly affects the efficacy of fully stochastic methods in practice.

The second setup is the random model setup, where a batch of samples is accessed at each~step.~The random models constructed from samples aim to enforce certain estimation accuracy conditions with fixed probability. \cite{Na2022adaptive} designed the first SSQP method under this setup, where random models are employed to compute an augmented Lagrangian merit function and \mbox{perform} a stochastic line search for the stepsize selection. \cite{Na2023Inequality} further introduced an \mbox{active-set}~\mbox{strategy}~to accommodate inequality constraints and \cite{Qiu2023sequential} enhanced it to a robust~SSQP design.
Moreover, \cite{Berahas2022Adaptive} introduced a norm test condition for the batch size~\mbox{selection}~that was later generalized to projection-based and augmented Lagrangian methods with complexity~analysis \citep{Beiser2023Adaptive, Bollapragada2023adaptive, Berahas2023Sequential}. Recently, \cite{Berahas2024Modified} considered a finite-sum problem and designed a modified line-search-based SQP~to~unify~the~global and local convergence guarantees as an alternative of performing a correction step.

Following the aforementioned literature, this paper designs a trust-region SSQP method within the random model framework for constrained stochastic optimization. Our development refines existing trust-region methods for unconstrained stochastic optimization \citep{Conn2009Introduction, Conn2009Global, Bandeira2012Computation, Bandeira2014Convergence, Chen2017Stochastic, Blanchet2019Convergence}. 
In particular, due to the potential contradiction between the linearized problem constraints and the trust-region constraint, we~propose~a parameter-free decomposition technique to address the infeasibility issue when computing the trial step.~We~also streamline the construction of random models based on \cite{Chen2017Stochastic, Blanchet2019Convergence}. Our models only require accuracy conditions at iterates, unlike some models in those references that require accuracy conditions over all points within the trust region, a more stringent requirement. Furthermore, we introduce a novel reliability parameter to improve an accuracy condition of objective value estimation (see \cite[Assumption 6]{Blanchet2019Convergence} and \eqref{def:reliable_est} for comparison). This parameter, without an upper limit, enhances the algorithm's adaptivity and may reduce per-iteration batch~size.

We would also like to mention the literature that studies problems where the objective function is \textit{deterministic} but evaluated with bounded noise. \cite{Sun2023trust, Lou2024Noise, Oztoprak2023Constrained} designed robust methods for these (unconstrained) problems and showed that the iterates would visit a neighborhood of (first-order) stationary points~\mbox{infinitely}~many~times.~Their~\mbox{algorithm}~design and analysis differ significantly from ours due to the distinction between deterministic~and~stochastic optimization. In their setting, the upper bound of the noise is an input of the method and affects the radius of the convergence neighborhood; that is, the upper bound is assumed to be known in advance. Our algorithm design does not require knowledge of the upper bound of the noise.

\subsection{Notation}

We use $\|\cdot\|$ to denote the $\ell_2$ norm for vectors and the operator norm for matrices. $I$ denotes the identity matrix and $\b0$ denotes the zero vector/matrix, whose dimensions are clear from the context. For the constraints $c(\bx): \mR^d\rightarrow\mR^m$, we let $G(\bx) \coloneqq\nabla c(\bx) \in\mR^{m\times d}$ denote its Jacobian matrix and let $c^i(\bx)$ denote the $i$-th constraint for $1\leq i\leq m$ (the subscript indexes the iteration).~Define $P(\bx)=I-G(\bx)^T[G(\bx)G(\bx)^T]^{-1}G(\bx)$ to be the \mbox{projection}~\mbox{matrix}~to~the~null~space~of~$G(\bx)$.~Then,~we~let~$Z(\bx)\in\mR^{d\times (d-m)}$ form the bases of $\text{ker}(G(\bx))$ such that $Z(\bx)^TZ(\bx)=I$ and $Z(\bx)Z(\bx)^T=P(\bx)$. Throughout the paper, we use an overline to denote a stochastic estimate of a quantity. For example,~$\barg(\bx)$ denotes an estimate of $g(\bx) \coloneqq \nabla f(\bx)$.

\subsection{Structure of the paper}

We introduce the computation of gradient steps, eigen steps, and SOC steps in Section \ref{sec:2}. We propose TR-SQP-STORM in Section \ref{sec:3} and establish first- and second-order convergence guarantees in Section \ref{sec:4}.~Numerical experiments are presented in Section \ref{sec:5}, and conclusions are presented~in~Section~\ref{sec:6}.$\quad$

\section{Preliminaries}\label{sec:2}

Let $\L(\bx,\blambda)=f(\bx)+\blambda^Tc(\bx)$ be the Lagrange function of Problem \eqref{Intro_StoProb} with $\blambda\in\mR^m$ representing the Lagrangian multipliers associated with the constraints $c(\bx)$.~Under certain constraint qualifications, finding a second-order stationary point of Problem \eqref{Intro_StoProb} is equivalent to finding a pair $(\bx^*,\blambda^*)$~such~that
\begin{align}\label{1st_order_point}
\nabla \L(\bx^*,\blambda^*)=\begin{pmatrix}
\nabla_\bx \L(\bx^*,\blambda^*)\\
\nabla_{\blambda} \L(\bx^*,\blambda^*)
\end{pmatrix}=
\begin{pmatrix}
\nabla f(\bx^*)+G(\bx^*)^T\blambda^*\\
c(\bx^*)
\end{pmatrix}=\begin{pmatrix}
\b0\\\b0
\end{pmatrix}\quad\quad\text{and}\quad\quad \tau(\bx^*,\blambda^*)\geq 0,
\end{align}\vskip-0.01cm
\noindent where $\tau(\bx^*,\blambda^*)$ denotes the smallest eigenvalue of the \textit{reduced Lagrangian Hessian} $Z(\bx^*)^T\nabla^2_{\bx}\L(\bx^*,\blambda^*)Z(\bx^*)$. The Lagrangian Hessian (with respect to the primal variable $\bx$) is defined~as $\nabla^2_{\bx}\L(\bx,\blambda)=\nabla^2 f(\bx)+\sum_{i=1}^{m}\blambda^i\nabla^2 c^{i}(\bx)$. A first-order stationary point $(\bx^*,\blambda^*)$ corresponds only to $\nabla \L(\bx^*,\blambda^*)=\b0$.

Throughout the paper, we call $\|\nabla_\bx \L(\bx,\blambda)\|$ the optimality residual, $\|\nabla_\blambda \L(\bx,\blambda)\|$ (i.e.,~$\|c(\bx)\|$)~the feasibility residual, and $\|\nabla \L(\bx,\blambda)\|$ the KKT residual.~Given the $k$-th iterate $(\bx_k,\blambda_k)$, we \mbox{denote}~$g_k= \nabla f(\bx_k)$, $\nabla^2 f_k= \nabla^2 f(\bx_k)$, and their estimates $\barg_k$ and $\bar{\nabla}^2 f_k$. The construction of these estimates via random models is introduced in Section \ref{sec:3}. We denote $c_k,G_k, \{\nabla^2 c_k^i\}_{i=1}^m$ similarly. The \textit{estimated} Lagrangian gradient is defined as $\bar{\nabla} \L_k = (\bar{\nabla}_{\bx}\L_k,c_k)$ with $\bar{\nabla}_{\bx}\L_k=\barg_k+G_k^T\blambda_k$, and the \textit{estimated} Lagrangian Hessian (with respect to $\bx$) is defined as $\bar{\nabla}^2_{\bx}\L_k= \bar{\nabla}^2 f_k+\sum_{i=1}^{m}\blambda_k^i\nabla^2 c^{i}_k$.

Given the iterate $\bx_k$ and the trust-region radius $\Delta_k$ in the $k$-th iteration, we compute a trial step $\Delta\bx_k$ by (approximately) solving the trust-region SQP subproblem:
\begin{align}\label{def:SQPsubproblem}
\min_{\Delta\bx\in\mR^d}&\quad \frac{1}{2}\Delta\bx^T\barH_k\Delta\bx+\barg_k^T\Delta\bx,\quad\quad\text{s.t.}\quad c_k+G_k\Delta\bx=\b0,\;\;\;\|\Delta\bx\|\leq\Delta_k,
\end{align}
where $\barH_k$ approximates the Lagrangian Hessian $\nabla^2_{\bx}\L_k$. The subproblem \eqref{def:SQPsubproblem} performs a quadratic approximation of the nonlinear objective and a linear approximation of the nonlinear constraints~in~\eqref{Intro_StoProb}, together with a trust-region constraint.
When aiming to find the first-order stationary point, we only need $\|\barH_k\|$ to be bounded; while when aiming to find the second-order stationary~point,~we~let~$\barH_k\coloneqq\bar{\nabla}^2_{\bx}\L_k$.~Compared to unconstrained problems, a subtlety is that \eqref{def:SQPsubproblem} will not have a feasible point~if$\quad$
\begin{equation*}
\{\Delta\bx\in\mR^d:c_k+G_k\Delta\bx=\b0\}\cap\{\Delta\bx\in\mR^d:\|\Delta\bx\|\leq\Delta_k\}=\emptyset.
\end{equation*}
This infeasibility issue occurs when the radius $\Delta_k$ is too short. In this work, we introduce a~\textit{parameter-free decomposition technique} for the step and trust-region radius to relax the linearized constraint and resolve the infeasibility issue. The step is decomposed into normal and tangential components, where their lengths are controlled by respective radii that are proportional to the feasibility residual and optimality residual (or negative curvature).~Our decomposition technique does not increase~the~cost~of solving the SQP subproblem.

The trial step $\Delta\bx_k$ can be either a gradient step or an eigen step. Gradient steps aim to reduce the KKT residual to achieve first-order convergence, while eigen steps aim to explore negative curvature~of the reduced Lagrangian Hessian to achieve second-order convergence. For~the~latter~purpose, we~also need to compute a second-order correction (SOC) step to overcome the Maratos effect \citep{Conn2000Trust}. We introduce the computation of gradient steps, eigen steps, and SOC steps in~Sections~\ref{subsec:2.1},~\ref{subsec:2.2}, and \ref{subsec:2.3}, respectively.

\subsection{Gradient steps}\label{subsec:2.1}

Our gradient step computation follows a similar spirit to \cite{Fang2024Fully}.~We decompose~the~trial~step $\Delta\bx_k$ into two orthogonal segments as (recall $Z_k\in \mR^{d\times (d-m)}$ forms the bases of $\text{ker}(G_k)$)
\begin{equation*}
\Delta\bx_k=\bw_k+\bt_k, \quad \quad \text{where}\quad\quad \bw_k\in\text{im}(G_k^T)\; \text{ and }\; \bt_k=Z_k\bu_k\in \ker(G_k)\; \text{ with }\; \bu_k\in\mR^{d-m}.
\end{equation*}
Here, $\bw_k$ is called the normal step and $\bt_k$ is called the tangential step. Suppose $G_k$ has full row~rank, then we define
\begin{equation}\label{eq:Sto_compute_v_k}
\bv_k \coloneqq -G_k^T[G_kG_k^T]^{-1}c_k.
\end{equation}
Without the trust-region constraint $\|\Delta\bx_k\|\leq \Delta_k$, the linearized constraint $c_k+G_k\Delta\bx_k=\b0$ would~imply $\bw_k = \bv_k$ since $G_k\bt_k=\b0$.~However, with the trust-region constraint, we relax~the~\mbox{linearized} constraint to $\bargamma_kc_k+G_k\Delta\bx_k=\b0$ for a scalar $\bargamma_k\in(0,1]$ defined later, which corresponds to~\mbox{shrinking}~$\bv_k$~by 
\begin{equation*}
\bw_k = \bargamma_k\bv_k.
\end{equation*}

To control the lengths of the normal and tangential steps, we define
\begin{equation}\label{nequ:1}
c^{RS}_k\coloneqq \frac{c_k}{\|G_k\|},\quad\quad\quad  \bar{\nabla}_{\bx}\L_k^{RS} \coloneqq \frac{\bar{\nabla}_{\bx}\L_k}{\|\barH_k\|},\quad\quad\quad \bar{\nabla}\L_k^{RS} \coloneqq (\bar{\nabla}_{\bx}\L_k^{RS},c^{RS}_k)
\end{equation}
to be the \textit{rescaled} feasibility, optimality, and KKT residual vectors, respectively; and decompose the trust-region radius $\Delta_k$ as
\begin{equation}\label{eq:breve and tilde_delta_k}
\breve{\Delta}_k=\frac{\|c_k^{RS}\|}{\|\bar{\nabla}\L_k^{RS}\|}\cdot\Delta_k\quad\quad\quad\text{ and  }\quad\quad\quad\tilde{\Delta}_k=\frac{\|\bar{\nabla}_{\bx}\L_k^{RS}\|}{\|\bar{\nabla}\L_k^{RS}\|}\cdot\Delta_k,
\end{equation}
where we implicitly assume $\|\bar{\nabla}\L_k^{RS}\|\neq 0$ (otherwise, $\barg_k$ can be re-estimated).~We use $\breve{\Delta}_k$ to~control~the length of the normal step $\bw_k$ and use $\tilde{\Delta}_k$ to control the length of the tangential step~\mbox{$\bt_k = Z_k\bu_k$}.~In~particular, we let 
\begin{equation}\label{eq:Sto_gamma_k}
\bargamma_k \coloneqq \min\{ \breve{\Delta}_k/\|\bv_k\|,\; 1 \},
\end{equation}
and solve $\bu_k$ through the following subproblem reduced from \eqref{def:SQPsubproblem}:
\begin{equation}\label{eq:Sto_tangential_step}
\min_{\bu\in\mR^{d-m}}\quad m(\bu) \coloneqq\frac{1}{2}(Z_k\bu)^T\barH_k(Z_k\bu)+(\barg_k+\barH_k\bw_k)^TZ_k\bu,\quad\quad
\text{s.t.}\quad\|\bu\|\leq\tilde{\Delta}_k.
\end{equation}
Instead of solving \eqref{eq:Sto_tangential_step} exactly, we only require $\bu_k$ to achieve a fixed fraction $\kappa_{fcd}\in(0,1]$ of the~Cauchy reduction, that is, a reduction in the objective model $m(\cdot)$ achieved by the Cauchy~point~(see \cite{Nocedal2006Numerical}, Lemma 4.3):
\begin{equation}\label{eq:cauchy1}
m(\bu_k)-m(\b0)
\leq-\frac{\kappa_{fcd}}{2}\|Z_k^T(\barg_k+\barH_k\bw_k)\|\min\left\{\tilde{\Delta}_k,\frac{\|Z_k^T(\barg_k+\barH_k\bw_k)\|}{\|Z_k^T\barH_kZ_k\|}\right\}.
\end{equation}
Many approaches can be applied to enforce \eqref{eq:cauchy1}, such as the dogleg method, the two-dimensional~subspace minimization method, and the Steihaug's algorithm. We refer to \cite[Chapter 4]{Nocedal2006Numerical} for more details.

\begin{remark}

The radius decomposition \eqref{eq:breve and tilde_delta_k} is based on the ratios of the \textit{rescaled} feasibility and optimality residuals to the \textit{rescaled} KKT residual defined in \eqref{nequ:1}. The motivation of rescaling is to achieve scale invariance. When the problem objective and/or constraints are scaled by a (positive)~scalar,~the solution $\bx^*$ would not change but the original residuals $\|c_k\|$ and $\|\bnabla_\bx\mL_k\|$~would~be~scaled by that scalar. Thus, using original residuals would make the radius decomposition and further step~computation scale-variant.~In contrast, the \mbox{decomposition}~\eqref{eq:breve and tilde_delta_k} based~on~rescaled \mbox{residuals}~is~\mbox{scale-invariant}.\;\;\;\;

\end{remark}

\begin{remark}

Compared to \cite{Fang2024Fully}, we relax the factor $\kappa_{fcd}$ in condition \eqref{eq:cauchy1} from~$\kappa_{fcd}=1$ to $\kappa_{fcd}\in(0,1]$.~This relaxation allows the model reduction achieved by our subproblem solution~$\bu_k$~to be even less than that achieved by the Cauchy point (corresponding to $\kappa_{fcd}=1$), which can~be~computed easily and efficiently.~The factor $\kappa_{fcd}$ is determined by the approach~used~to~\mbox{compute}~$\bu_k$.~Specifically, $\kappa_{fcd}=1$ if we compute the exact Cauchy point or apply the two-dimensional method~or~the~dogleg method, while $\kappa_{fcd}\leq1$ if we apply the Steihaug’s algorithm with proper termination conditions.\;\;\;\;

\end{remark}

\subsection{Eigen steps}\label{subsec:2.2}

Performing gradient steps may not lead to a second-order stationary point because gradient steps do not keep track of the eigenvalues of the reduced Lagrangian Hessian $Z_k^T\barH_kZ_k$, which should be positive semidefinite near a second-order stationary point.~In this subsection, we introduce~eigen~steps to address negative curvature (i.e., increase the most negative eigenvalue) of the reduced \mbox{Lagrangian}~Hessian. 
Let $\bartau_k$ be the smallest eigenvalue of $Z_k^T\barH_kZ_k$ and let $\bartau_k^+\coloneqq |\min\{\bartau_k,0\}|$. The eigen step~is~taken only when $\bartau_k<0$ (cf. \eqref{eq:cri_step_comput} in Section \ref{sec:3}).

Analogous to the gradient step, the eigen step $\Delta\bx_k$ is decomposed into a normal step and~a~tangential step as $\Delta\bx_k=\bw_k+\bt_k$. To control their lengths, we let $\bartau_k^{RS+}\coloneqq \bartau_k^+/\|\barH_k\|$ be the \textit{rescaled}~negative curvature, and decompose the radius based on the proportions of the (rescaled)~feasibility~residual~and negative curvature as
\begin{equation}\label{eq:breve and tilde_delta_k_2}
\breve{\Delta}_k=\frac{\|c_k^{RS}\|}{\|(c_k^{RS},\bartau_k^{RS+})\|}\cdot\Delta_k\quad\quad\quad\text{ and  }\quad\quad\quad\tilde{\Delta}_k=\frac{\bartau_k^{RS+}}{\|(c_k^{RS},\bartau_k^{RS+})\|}\cdot\Delta_k.
\end{equation}
Again, we use $\breve{\Delta}_k$ to control the length of the normal step $\bw_k$ and use $\tilde{\Delta}_k$ to control the length~of~the tangential step $\bt_k = Z_k\bu_k$.~Specifically, the normal step is computed as $\bw_k=\bargamma_k\bv_k$, where $\bv_k$~is~defined in \eqref{eq:Sto_compute_v_k} and $\bargamma_k$ is defined in \eqref{eq:Sto_gamma_k} but with \eqref{eq:breve and tilde_delta_k_2} used to compute $\breve{\Delta}_k$. The tangential step $\bt_k=Z_k\bu_k$ solves the subproblem \eqref{eq:Sto_tangential_step}, but instead of achieving the Cauchy reduction \eqref{eq:cauchy1}, we require $\bu_k$ to~satisfy
\begin{equation}\label{eq:reduction_eigen_step}
(\barg_k+\barH_k\bw_k)^TZ_k\bu_k\leq 0,\quad\quad \|\bu_k\| \leq \tilde{\Delta}_k,\quad\quad(Z_k\bu_k)^T\barH_k(Z_k\bu_k)\leq-\kappa_{fcd}\cdot\bar{\tau}_k^+\tilde{\Delta}_k^2,
\end{equation}
which implies the curvature reduction:
\begin{equation}\label{nequ:2}
m(\bu_k) - m(\b0) \leq -\frac{\kappa_{fcd}}{2}\bar{\tau}_k^+\tilde{\Delta}_k^2<0.
\end{equation}
Here, we use $\kappa_{fcd}\in(0,1]$ to denote the fraction in both gradient steps and eigen steps for simplicity.

\begin{remark}

We briefly discuss how to compute $\bu_k$ in practice. Let $\barbzeta_k$ be an \textit{approximation} of the eigenvector of $Z_k^T\barH_kZ_k$ corresponding to the eigenvalue $\bartau_k$, and let $\barbzeta_k^{RS}\coloneqq \pm \barbzeta_k\cdot \tilde{\Delta}_k/\|\barbzeta_k\|$.~Then,~$\barbzeta_k^{RS}$ satisfies the first two conditions in \eqref{eq:reduction_eigen_step}. The third condition is also satisfied with~$\kappa_{fcd}=1$ by~computing the exact eigenvector. More generally, methods such as truncated conjugate \mbox{gradient}~and~truncated Lanczos methods can be employed to solve \eqref{eq:Sto_tangential_step} and satisfy \eqref{eq:reduction_eigen_step}; see \cite[Chapter 7.5]{Conn2000Trust} for such applications.

\end{remark}

\begin{remark}

\cite{Byrd1987Trust, Conn2000Trust} proposed decomposing the radius $\Delta_k$ into $\alpha\Delta_k$ and $(1-\alpha)\Delta_k$, where $\alpha\in(0,1)$ is a user-specified parameter. In contrast to their approaches,~our~radius decomposition is \textit{parameter-free}. In particular, we define $\breve{\Delta}_k$ and $\tilde{\Delta}_k$ in~proportion to the~rescaled feasibility residual and negative curvature. This choice is motivated by observing that the normal~step correlates with reducing the feasibility residual:
\begin{equation*}
\|c_k+G_k\Delta\bx_k\|-\|c_k\|=\|c_k+G_k\bw_k\|-\|c_k\|=-\bargamma_k\|c_k\|\leq 0,	
\end{equation*}
while the tangential step correlates with reducing $\bartau_k^+$, or equivalently, increasing the most \mbox{negative}~eigenvalue, as implied by \eqref{nequ:2}.

\end{remark}

\subsection{Second-order correction steps}\label{subsec:2.3}

Second-order correction (SOC) steps are designed to address the \textit{Maratos effect}. \cite{Byrd1987Trust} observed that when $\bx_k$ is a saddle point, the (gradient or eigen) step $\Delta\bx_k$ may increase~$f(\bx)$~and~$\|c(\bx)\|$ simultaneously, resulting in a rejection of the step in the algorithm design. Furthermore, this issue cannot be resolved by recursively reducing the radius $\Delta_k$, indicating that we are trapped at the~saddle point. Such a phenomenon (called the Maratos effect) is unique to constrained optimization~problems and stems from the inaccurate \textit{linear} approximation of the nonlinear problem constraints.

To avoid the above situation and converge to a second-order stationary point, we \textit{correct} the~trial step $\Delta\bx_k$ by following the curvature of the constraints more closely and performing the step $\Delta\bx_k +\bd_k$ \textit{when necessary}. The SOC step $\bd_k$ is given by 
\begin{equation}\label{def:correctional_step}
\bd_k=-G_k^T[G_kG_k^T]^{-1}\cbr{c(\bx_k+\Delta\bx_k)-c_k-G_k\Delta\bx_k}.
\end{equation}
Our SOC step $\bd_k$ differs from the existing one that is widely used in deterministic SQP methods \citep{Byrd1987Trust}, where $\bd_k=-G_k^T[G_kG_k^T]^{-1}c(\bx_k+\Delta\bx_k)$. This difference is motivated by the distinct behavior of the trust-region radius $\Delta_k$ in deterministic and stochastic SQP \mbox{methods}.~In~particular, in deterministic SQP methods, $\Delta_k$ is locally bounded away from zero, so the trust-region constraint will eventually become inactive. This property implies that $\bargamma_k = 1$ and $c_k + G_k\Delta\bx_k = \b0$ for large enough~$k$ (see \eqref{eq:Sto_gamma_k}). However, as shown in Lemmas \ref{lemma:diff_ared_pred_wo_corr_step} -- \ref{lemma:guarantee_succ_step_eigen}, a stochastic model is a good \mbox{surrogate}~of~the~true model only when the estimates are accurate, which holds with~a~fixed~\mbox{probability}~at each \mbox{iteration}.~As finally proved in Corollary \ref{coro:radius_conv_zero}, our stochastic SQP method exhibits $\Delta_k \rightarrow 0$, implying that $\bargamma_k$ may fail to converge to 1, and we can no longer guarantee $c_k + G_k\Delta\bx_k = \b0$.
As such, we~\mbox{incorporate}~the~remainder $c_k+G_k\Delta\bx_k$ in \eqref{def:correctional_step} to ensure that $\bd_k$ accounts for a higher order term of $\Delta\bx_k$.

\section{Trust-Region SQP for Stochastic Optimization with Random Models}\label{sec:3}

We propose the TR-SQP-STORM method in this section, which is summarized in Algorithm~\ref{Alg:STORM}.~We begin by introducing the random models used to estimate the objective value, gradient, and Hessian.$\;\;$

\subsection{Random models}

The random models in this paper are estimates of the objective values, gradients, and Hessians at each iteration. These estimates are constructed from random realizations of the stochastic objective function and are required to satisfy certain \textit{adaptive} accuracy conditions with a high~but~\textit{fixed}~probability. 
We do not specify a particular approach to obtain the estimates or assume a parametric~\mbox{distribution} for them \citep[in contrast with the sub-exponential assumption in][]{Berahas2023Sequential, Cao2023First}, which allows us to flexibly cover various problem settings.~Our goal is to~show~that,~\mbox{under}~\mbox{adaptive}~accuracy conditions, the methods utilizing these estimates converge almost surely.

Let $\kappa_h,\kappa_g,\kappa_f>0$ and $p_h,p_g,p_f\in(0,1)$ be user-specified parameters, and let $\alpha\in\{0,1\}$ be an indicator that denotes whether the algorithm is finding a first-order stationary point ($\alpha=0$) or a second-order stationary point ($\alpha=1$). Recall that $\Delta_k$ is the trust-region radius, which~will~be~adaptively adjusted in each step.

\vskip0.2cm
\noindent$\bullet$ \textbf{Hessian estimate.} 
We have to estimate the Hessian only when $\alpha=1$, i.e., when we are aiming~to find a second-order stationary point. In particular, we require
\begin{equation}\label{def:Ak}
\A_k=\left\{\|\bar{\nabla}^2f_k-\nabla^2 f_k\|\leq\kappa_h\Delta_k\right\} \quad\quad \text{satisfies}\quad\quad P(\A_k\mid\bx_k)\geq 1- p_h.
\end{equation}
The above accuracy condition indicates that the estimation error of the Hessian is proportional to the radius $\Delta_k$ with probability at least $1-p_h$. This condition is not required for first-order~convergence.

\vskip0.2cm
\noindent$\bullet$ \textbf{Gradient estimate.} 
We require the gradient estimate $\barg_k$ to satisfy an accuracy condition~proportional to $\Delta_k^{\alpha+1}$ with probability at least $1-p_g$:
\begin{equation}\label{def:Bk}
\B_k=\{\|\barg_k-g_k\|\leq\kappa_g\Delta_k^{\alpha+1}\} \quad\quad \text{satisfies}\quad\quad P(\B_k\mid\bx_k)\geq 1 - p_g.
\end{equation}

\noindent$\bullet$ \textbf{Function value estimate.}~We estimate the function value at two points: the current~\mbox{iterate}~$\bx_k$~and the trial iterate $\bx_{s_k}$, where $\bx_{s_k}= \bx_k+\Delta\bx_k$ if the SOC step is not performed and $\bx_{s_k}= \bx_k+ \Delta\bx_k+ \bd_k$ if the SOC step is performed. The trial iterate may not be accepted (i.e., $\bx_{k+1}=\bx_{k}$).

We require the following accuracy conditions:
\begin{equation}\label{def:Ck}
\C_k=\left\{\max\left(|\barf_k- f_k|,|\barf_{s_k}- f_{s_k}|\right)\leq\kappa_f\Delta_k^{\alpha+2}\right\} \quad\quad \text{satisfies}\quad\quad P(\C_k\mid\bx_k,\Delta\bx_k)\geq 1- p_f,
\end{equation}
and
\begin{equation}\label{def:reliable_est}
\max\left\{\mE\left[|\barf_k - f_k|^2\mid\bx_k,\Delta\bx_k\right],\mE\left[|\barf_{s_k} -f_{s_k}|^2\mid\bx_k,\Delta\bx_k\right]\right\}\leq\barepsilon_k^2.
\end{equation}
The first condition states that the estimation errors of $\barf_k$ and $\barf_{s_k}$ are proportional to $\Delta_k^{\alpha+2}$~with~probability at least $1-p_f$, which is more restrictive than the gradient and Hessian estimation.
The~second condition indicates that the variance of the estimates is controlled by a \mbox{reliability}~\mbox{parameter}~$\barepsilon_k$.~Here, $\barepsilon_k$ is updated at each step based on how reliably the reduction achieved in the random SQP~model~can be applied to the true SQP model, which is quantitatively measured by~the~magnitude~of~the~\mbox{reduction}.

\vskip0.2cm

We note that the above accuracy conditions \eqref{def:Ak}--\eqref{def:reliable_est} enable biased estimates, as long~as~the~probability of getting a large bias is small enough.
Estimates that satisfy these conditions can~be~obtained through various approaches. For example, we can construct estimates via subsampling as follows:$\quad\quad$
\begin{align*}
\bar{\nabla}^2f_k & = \frac{1}{|\xi_h^k|}\sum_{\xi_h\in\xi_h^k}\nabla^2F (\bx_k;\xi_h),\quad \quad\quad
\barg_k = \frac{1}{|\xi_g^k|}\sum_{\xi_g\in\xi_g^k}\nabla F (\bx_k;\xi_g),\\
\barf_k & = \frac{1}{|\xi_f^k|}\sum_{\xi_f\in\xi_f^k} F (\bx_k;\xi_f),\quad \quad\quad \quad\barf_{s_k} = \frac{1}{|\xi_f^k|} \sum_{\xi_f\in\xi_f^k}F (\bx_{s_k};\xi_f),
\end{align*}
where $\xi_h^k,\xi_g^k,\xi_f^k$ denote the sample sets and $|\cdot|$ denotes the sample size. 
If each realization $\nabla^2F(\bx_k;\xi_h)$, $\nabla F(\bx_k;\xi_h)$, $F(\bx_k;\xi_h)$ has a bounded variance, then the conditions \eqref{def:Ak}--\eqref{def:reliable_est} hold provided
\begin{equation}\label{eq:batchsize}
|\xi_h^k|\geq\frac{C_h}{p_h\cbr{\kappa_h\Delta_k}^2},\quad\quad |\xi_g^k|\geq\frac{C_g}{p_g\{\kappa_g\Delta_k^{\alpha+1}\}^2 },\quad\quad |\xi_f^k|\geq\frac{C_f}{p_f\min( \{\kappa_f\Delta_k^{\alpha+2}\}^2,\barepsilon_k^2)}
\end{equation}
for some constants $C_h,C_g,C_f>0$ (by Chebyshev's inequality). 
Furthermore, if the noise has~a~sub-exponential tail assumption, the factor $1/p_h$ in \eqref{eq:batchsize} can be relaxed to $\log(1/p_h)$ (similar~for~$1/p_g, 1/p_f$), as suggested by the (matrix) Bernstein concentration inequality \citep[Theorems 6.1 and~6.2]{Tropp2011User}.

Compared to existing literature on unconstrained problems \citep{Chen2017Stochastic, Blanchet2019Convergence}, we introduce several modifications to random models. First, our method designs random~models specifically for estimates at iterates, whereas existing literature imposed accuracy~conditions~on~all points within the trust region --- a notably more stringent requirement.~Second, \cite{Blanchet2019Convergence} adopted $\Delta_k^3$ in \eqref{def:reliable_est} to regulate expected errors in objective value estimates.~In contrast,~we~introduce a reliability parameter $\barepsilon_k$ following \cite{Na2022adaptive}. This parameter provides additional~\mbox{flexibility}~to the random model, as it is not subject to an upper bound and can be updated somewhat independently of $\Delta_k$. As we will demonstrate in Section \ref{sec:4}, with $\Delta_k\rightarrow 0$~as~$k\rightarrow\infty$,~it~is~possible~that~$\barepsilon_k \geq \Delta_k^3$ for sufficiently large $k$.~Consequently, compared to \cite{Blanchet2019Convergence}, our model may require~fewer samples to meet the reliability condition expressed in \eqref{def:reliable_est}.

\subsection{Algorithm design}\label{sec:3.2}

We require the following user-specified parameters: $p_h, p_g, p_f,\eta\in(0,1)$, $\kappa_{fcd}\in(0,1]$, $r,\Delta_{\max},\kappa_h$, $\kappa_g>0$, $0<\kappa_f\leq \frac{\kappa_{fcd}\eta^3}{16\max\{1,\Delta_{\max}\}}$, and $\rho,\gamma>1$. We initialize the method with $\bx_0$, $\Delta_0\in(0, \Delta_{\max})$,~and $\barepsilon_0$, $\barmu_0>0$. Recall that we set $\alpha=0$ if we aim to find a first-order stationary point, while $\alpha=1$~if~we aim to find a second-order stationary point.

Given $(\bx_k,\Delta_k,\barepsilon_k,\barmu_k)$ in the $k$-th iteration, our method proceeds in the following four steps.$\quad\quad$

\vskip0.2cm

\noindent\textbf{\underline{Step 1:} Gradient and Hessian estimations.} We obtain the gradient estimate $\barg_k$ that satisfies \eqref{def:Bk}. Then we compute the Lagrangian multiplier $\barblambda_k=-[G_kG_k^T]^{-1}G_k\barg_k$ and the Lagrangian~gradient $\bar{\nabla}\L_k = \barg_k + G_k^T\blambda_k$. For the Hessian estimate, we consider two cases.

\vskip0.2cm

\noindent $\bullet$ \textbf{If $\alpha=0$:} we generate any matrix $\barH_k$ to approximate the Lagrangian Hessian $\nabla_{\bx}^2\L_k$ and set~$\bartau_k^+=0$.

\vskip0.1cm
\noindent $\bullet$ \textbf{If $\alpha=1$:} we obtain the Hessian estimate $\bar{\nabla}^2 f_k$ that satisfies \eqref{def:Ak}. Then, we compute $\barH_k=\bar{\nabla}^2 f_k+\\ \sum_{i=1}^{m}\barblambda_k^i\nabla^2 c_k^i$ and set $\bartau_k$ to be the smallest eigenvalue of $Z_k^T\barH_kZ_k$ and $\bartau_k^+=|\min\{\bartau_k,0\}|$.

\vskip0.3cm

\noindent\textbf{\underline{Step 2:} Trial step computation.} With the above gradient and Hessian estimates, we compute~the trial step. In particular, if the following condition \textit{does not} hold
\begin{equation}\label{def:acc1}
\max\left\{\frac{\|\barnablaL_k\|}{\max\{1,\|\barH_k\|\}},\bartau_k^+\right\}\geq \eta\Delta_k,
\end{equation}
we say the $k$-th iteration is \textit{\textbf{unsuccessful}} and let $\bx_{k+1}=\bx_k$. We also decrease the radius and the reliability parameter by $\Delta_{k+1}=\Delta_k/\gamma$ and $\barepsilon_{k+1}=\barepsilon_k/\gamma$.

Otherwise, if \eqref{def:acc1} holds, we then decide whether to perform a gradient step or an eigen step.~We check the following condition:
\begin{equation}\label{eq:cri_step_comput}
\|\bar{\nabla}\L_k\|\min\left\{\Delta_k,\frac{\|\bar{\nabla}\L_k\|}{\|\barH_k\|}\right\}\geq  \bartau_k^+\Delta_k\left(\Delta_k+\|c_k\|\right).
\end{equation}
If \eqref{eq:cri_step_comput} holds, we compute $\Delta\bx_k$  as the gradient step (cf. Section \ref{subsec:2.1}); otherwise, we compute $\Delta\bx_k$~as the eigen step (cf. Section \ref{subsec:2.2}). 

\begin{remark}

The criterion in a similar flavor to \eqref{def:acc1} is a standard practice in stochastic optimization \citep[see, e.g.,][]{Chen2017Stochastic,Blanchet2019Convergence,Jin2024Sample}.~In fact, due~to~the~presence~of estimation errors, 
there is a potential discrepancy where the trial step leads to a sufficient reduction in the stochastic (merit) function, suggesting a successful $k$-th iteration, whereas it~leads to an insufficient reduction (or even an increase) in the actual expected (merit) function.
The~\mbox{condition}~\eqref{def:acc1}~\mbox{guarantees} that the stepsize $\Delta_k$ will not exceed a certain proportion of either $\|\bar{\nabla}\L_k\|$ or $\bartau_k^+$, should an \mbox{iteration}~be deemed successful and the iterate be updated, thus mitigating the repercussions of such discrepancies.

\end{remark}

\begin{remark}

The condition \eqref{eq:cri_step_comput} compares two reductions achieved by the gradient and eigen~steps. 
The left-hand side represents the reduction made by the gradient step, while the right-hand~side~represents the reduction made by the eigen step. Instead of computing both the gradient and eigen steps in each iteration, we always perform the more aggressive step. Certainly, when finding~a~\mbox{first-order}~stationary point, we have $\bartau_k^+ = 0$ and \eqref{eq:cri_step_comput} holds; thus, we always perform the gradient step.

\end{remark}

\noindent\textbf{\underline{Step 3:} Merit function estimation.}~After we compute the trial step, we then update the iterate $\bx_k$. The update is based on the reduction of the trial step achieved on an (estimated)~$\ell_2$~merit~\mbox{function}~that balances the objective optimality and constraints violation:
\begin{equation}\label{def:det_merit_fun}
\L_{\mu}(\bx)=f(\bx)+\mu\|c(\bx)\|.
\end{equation} 
In particular, given $\barmu_{k}$, we define the predicted reduction as
\begin{equation}\label{def:Pred_k}
\text{Pred}_k=\barg_k^T\Delta\bx_k+\frac{1}{2}\Delta\bx_k^T\barH_k\Delta\bx_k+\barmu_k(\|c_k+G_k\Delta\bx_k\|-\|c_k\|),
\end{equation}
which can be viewed as the reduction of the linearized merit function. We update the merit~parameter $\barmu_k\leftarrow\rho\barmu_k$ until
\begin{equation}\label{eq:threshold_Predk}
\text{Pred}_k\leq -\frac{\kappa_{fcd}}{2} \max\left\{\|\bar{\nabla}\L_k\|\min\left\{\Delta_k,\frac{\|\bar{\nabla}\L_k\|}{\|\barH_k\|}\right\},\bartau_k^+\Delta_k\left(\Delta_k+\|c_k\|\right)\right\}.
\end{equation}
(Our analysis in Section \ref{subsec:merit_para} shows that \eqref{eq:threshold_Predk} is ensured to satisfy for large enough $\barmu_{k}$.) On the other hand, we let $\bx_{s_k}=\bx+\Delta\bx_k$ be the trial point and obtain the function value estimates~$\barf_k$~and~$\barf_{s_k}$~that satisfy \eqref{def:Ck} and \eqref{def:reliable_est}. Then, we compute the actual reduction as
\begin{equation}\label{def:Ared_k}
\text{Ared}_k=\bar{\L}_{\barmu_k}^{s_k}-\bar{\L}_{\barmu_k}^k=\barf_{s_k}-\barf_k+\barmu_k(\|c_{s_k}\|-\|c_k\|).
\end{equation}

\noindent\textbf{\underline{Step 4:} Iterate update.} Finally, we update the iterate by checking the following condition:
\begin{equation}\label{nequ:3}
\text{(a):}\;\;\; \text{Ared}_k/\text{Pred}_k\geq\eta\hskip2cm \text{and}\hskip2cm \text{(b):} \;\;\; -\text{Pred}_k\geq\barepsilon_k.
\end{equation}
In particular, we see that \eqref{nequ:3} leads to three cases.

\vskip0.2cm

\noindent $\bullet$ \textbf{Case 1: (\ref{nequ:3}a) holds.}~We say the $k$-th iteration is \textit{\textbf{successful}}. We update the iterate~and~the~trust-region radius as $\bx_{k+1}=\bx_{s_k}$ and $\Delta_{k+1}=\min\{\gamma\Delta_k,\Delta_{\max}\}$. Furthermore, if (\ref{nequ:3}b) holds, we say~the $k$-th iteration is \textit{\textbf{reliable}} and increase the reliability parameter by $\barepsilon_{k+1}=\gamma\barepsilon_k$. 
Otherwise, we~say~the $k$-th iteration is \textit{\textbf{unreliable}} and decrease the reliability parameter by $\barepsilon_{k+1}=\barepsilon_k/\gamma$.

\vskip0.2cm

\noindent $\bullet$ \textbf{Case 2: (\ref{nequ:3}a) does not hold and $\alpha = 1$.} In this case, we decide whether to perform a~SOC~step to recheck (\ref{nequ:3}a). Specifically, if $\|c_k\|\leq r$, we compute a SOC step $\bd_k$ (cf. Section~\ref{subsec:2.3})~and~set~$\bx_{s_k}=\bx_k+\Delta\bx_k+\bd_k$ as a new trial point. Then, we re-estimate $\barf_{s_k}$ to satisfy \eqref{def:Ck} and \eqref{def:reliable_est}, recompute $\text{Ared}_k$ as in \eqref{def:Ared_k}, and recheck (\ref{nequ:3}a). If (\ref{nequ:3}a) holds, we go to \textbf{Case 1} above; if  (\ref{nequ:3}a) does not~hold,~we go to \textbf{Case 3} below. On the other hand, if $\|c_k\|>r$, the SOC step is not triggered and we directly~go to \textbf{Case 3} below.

\vskip0.2cm

\noindent $\bullet$ \textbf{Case 3: (\ref{nequ:3}a) does not hold and $\alpha = 0$.} We say the $k$-th iteration is \textit{\textbf{unsuccessful}}. 
We~do~not update the current iterate by setting $\bx_{k+1}=\bx_k$, and decrease the trust-region radius~and~the~reliability parameter by setting $\Delta_{k+1}=\Delta_k/\gamma$ and $\barepsilon_{k+1}=\barepsilon_k/\gamma$.

\vskip0.2cm

The criterion (\ref{nequ:3}a) is aligned with the deterministic trust-region methods for deciding whether~the trial step is successful \citep{Powell1990trust, Byrd1987Trust, Omojokun1989Trust, Heinkenschloss2014Matrix}, ensuring that the reduction in the merit function is at least a specified fraction of the reduction predicted by the SQP model. Based on the values of $\text{Pred}_k$ and $\barepsilon_k$,~we~\mbox{further}~\mbox{classify}~the~successful step into a reliable or unreliable step. For a reliable step, we increase the reliability parameter to relax the accuracy condition for the subsequent iteration, thereby reducing the necessary sample size. Conversely, for an unreliable step, we decrease the reliability parameter for the next iteration~to secure more reliable estimates.

To end this section, we introduce some additional notation. We define $\F_{-1}\subseteq\F_0\subseteq\F_1\cdots$ as a filtration of $\sigma$-algebras, where $\mF_{k-1} = \sigma(\{\bx_i\}_{i=0}^{k})$, $\forall k\geq 0$, contains all the randomness~before performing the $k$-th iteration. In the $k$-th iteration, we first obtain $\barg_k$ and $\bnabla^2 f_k$ (if $\alpha=1$),~and~then~compute $\Delta\bx_k$, update $\barmu_{k}$, and compute $\bd_k$ (if SOC is triggered). Defining $\F_{k-0.5} = \sigma(\{\bx_i\}_{i=0}^{k}\cup \{\barg_k, \bnabla^2 f_k\})$, we find that for all $k\geq 0$, $\sigma(\bx_k,\Delta_k,\barepsilon_k)\subseteq \F_{k-1}$ and $\sigma(\Delta\bx_k,\barblambda_k,\barmu_k,\bd_k)\subseteq\F_{k-0.5}$.

\begin{algorithm}[t]
\caption{TR-SQP for Stochastic Optimization with Random Models (TR-SQP-STORM)}\label{Alg:STORM}
\begin{algorithmic}[1]
\State \textbf{Input:} Initial iterate $\bx_0$ and radius $\Delta_0\in(0,\Delta_{\max})$, and parameters $p_h,p_g,p_f,\eta\in(0,1)$,~$\kappa_{fcd}\in(0,1]$, $\barmu_0, \barepsilon_0, r,\kappa_h,\kappa_g>0$, $0<\kappa_f\leq\frac{\kappa_{fcd}\eta^3}{16\max\{1,\Delta_{\max}\}}$, $\rho,\gamma>1$. 
\State Set $\alpha=0$ for first-order stationarity and $\alpha=1$ for second-order stationarity.
\For {$k=0,1,\cdots,$}
\State Obtain $\barg_k$ and compute $\barblambda_k$ and $\bar{\nabla}\L_k$. \Comment{\textbf{Step 1}}
\State If $\alpha=1$, obtain $\bar{\nabla}^2 f_k$, compute $\barH_k$ and the smallest eigenvalue $\bartau_{k}$ of $Z_k^T\barH_kZ_k$, and~set~$\bartau_{k}^+=|\min\{\bartau_{k},0\}|$. Otherwise, let $\barH_{k}$ be certain approximation of $\nabla^2\mL_k$ and set $\bartau_{k}^+ = 0$.

\If{\eqref{def:acc1} does not hold} \Comment{\textbf{Step 2}}
\State Set $\bx_{k+1}=\bx_k,\Delta_{k+1}=\Delta_k/\gamma,\barepsilon_{k+1}=\barepsilon_k/\gamma$. \LongComment{Unsuccessful iteration}
\Else
\State Compute $\Delta\bx_k$ as a gradient step if \eqref{eq:cri_step_comput} holds; otherwise, compute $\Delta\bx_k$ as an eigen step.

\State Perform $\barmu_k \leftarrow \rho\barmu_k$ until $\text{Pred}_k$ satisfies \eqref{eq:threshold_Predk}.  \Comment{\textbf{Step 3}}
\State Set $\bx_{s_k}=\bx_k+\Delta\bx_k$, obtain $\barf_k, \barf_{s_k}$, and compute $\text{Ared}_k$ as in \eqref{def:Ared_k}.

\If {$\text{Ared}_k/\text{Pred}_k\geq\eta$}\Comment{\textbf{Step 4 (Case 1)}}
\State Set $\bx_{k+1}=\bx_{s_k}$ and $\Delta_{k+1}=\min\{\gamma\Delta_k,\Delta_{\max}\}$. \LongComment{Successful iteration}
\If {$-\text{Pred}_k\geq \barepsilon_k$}
\State Set $\barepsilon_{k+1}=\gamma\barepsilon_k$. \LongComment{Reliable iteration}
\Else
\State Set $\barepsilon_{k+1}=\barepsilon_k/\gamma$. \LongComment{Unreliable iteration}
\EndIf

\ElsIf{$\alpha=1$ and $\|c_k\|\leq r$} \Comment{\textbf{Step 4 (Case 2)}}
\State Compute SOC step $\bd_k$, set $\bx_{s_k}=\bx_k+\Delta\bx_k+\bd_k$, re-estimate $\barf_{s_k}$, and recompute~$\text{Ared}_k$. 
\State If $\text{Ared}_k/\text{Pred}_k\geq\eta$, perform Lines 13-18; otherwise, perform Line 23.

\Else \Comment{\textbf{Step 4 (Case 3)}}
\State Set $\bx_{k+1}=\bx_k$, $\Delta_{k+1}=\Delta_k/\gamma$, $\barepsilon_{k+1}=\barepsilon_k/\gamma$. \LongComment{Unsuccessful iteration}
\EndIf
\State Set $\barmu_{k+1}=\barmu_k$.
\EndIf 
\EndFor
\end{algorithmic}
\end{algorithm}

\section{Convergence Analysis} \label{sec:4}

In this section, we establish global almost sure first- and second-order convergence properties~for~TR-SQP-STORM. We begin by stating assumptions.

\begin{assumption}\label{ass:1-1}

Let $\Omega\subseteq\mR^d$ be an open convex set containing the iterates and trial points~$\{\bx_k, \bx_{s_k}\}$. The objective $f(\bx)$ is twice continuously differentiable and bounded below by $f_{\inf}$ over $\Omega$.~The~gradient $\nabla f(\bx)$ and Hessian $\nabla^2 f(\bx)$ are both Lipschitz continuous over $\Omega$, with constants $L_{\nabla f}$ and~$L_{\nabla^2 f}$, respectively. Analogously, the constraint $c(\bx)$ is twice continuously differentiable and~its~\mbox{Jacobian}~$G(\bx)$ is Lipschitz continuous over $\Omega$ with constant $L_G$. 
For $1 \leq i \leq m$, the Hessian of the $i$-th constraint, $\nabla^2 c^i(\bx)$, is Lipschitz continuous over $\Omega$ with constant $L_{\nabla^2 c}$. Furthermore, we assume that there~exist constants $\kappa_c$, $\kappa_{\nabla f}$, $\kappa_{1,G}$, $\kappa_{2,G}>0$ such that \vskip-0.3cm
\begin{equation*}
\|c_k\| \leq \kappa_{c},\quad\quad \|g_k\| \leq \kappa_{\nabla f}, \quad\quad \kappa_{1,G} \cdot I \preceq G_k G_k^T \preceq \kappa_{2,G} \cdot I, \quad\quad\forall k\geq 0.
\end{equation*}\vskip-0.04cm
\noindent For first-order stationarity, we require the Hessian approximation $\|\barH_k\| \leq \kappa_B$ for a constant~$\kappa_B \geq 1$.

\end{assumption}

Assumption \ref{ass:1-1} is standard in the SQP literature \citep{Byrd1987Trust, Powell1990trust, ElAlem1991Global, Conn2000Trust, Berahas2021Sequential, Berahas2023Stochastic, Curtis2024Stochastic, Fang2024Fully}. For first-order stationarity, it suffices to assume that $f(\bx)$ and $c(\bx)$ are continuously~differentiable,~without continuity conditions on Hessians $\nabla^2 f(\bx)$ and $\nabla^2 c^i(\bx)$. Assumption \ref{ass:1-1} implies that $G_k$ has full row rank, $\sqrt{\kappa_{1,G}}\leq\|G_k\|\leq\sqrt{\kappa_{2,G}}$, and $\|G_k^T[G_kG_k^T]^{-1}\| \leq 1/\sqrt{\kappa_{1,G}}$. Consequently, both the true~Lagrangian multiplier $\blambda_k = -[G_kG_k^T]^{-1}G_kg_k$ and the estimated counterpart $\barblambda_k = -[G_kG_k^T]^{-1}G_k\barg_k$ are well defined. Additionally, $\|\nabla^2 f(\bx)\| \leq L_{\nabla f}$ and $\|\nabla^2 c^i(\bx)\| \leq L_G$ for $1\leq i\leq m$ over $\Omega$.

The following assumption states that the merit parameter $\barmu_k$ stabilizes when $k\rightarrow\infty$.

\begin{assumption}\label{ass:4-1}
There exist an (potentially stochastic) iteration threshold $\barK<\infty$ and a~deterministic constant $\hat{\mu}$, such that $\barmu_k=\barmu_{\barK}\leq \hatmu$ for all $k\geq\barK$.
\end{assumption}

Assumption \ref{ass:4-1} is commonly imposed in advance for studying the global convergence of SSQP \citep{Berahas2021Sequential, Berahas2023Stochastic, Berahas2023Accelerating, Curtis2024Stochastic, Fang2024Fully}. Compared to existing literature, we 
do not require $\barmu_{\barK}$ to be large enough.~In Section \ref{subsec:merit_para}, we show that our merit~\mbox{parameter}~\mbox{update}~scheme for ensuring the sufficient reduction \eqref{eq:threshold_Predk} will naturally make this assumption hold, provided $\barg_k$ and $\bar{\nabla}^2 f_k$ are upper bounded and $\|\barH_k\|$ is lower bounded.

\subsection{Fundamental lemmas}

Our first result shows that, on the event $\A_k\cap\B_k$ in \eqref{def:Ak} and \eqref{def:Bk}, the Hessian estimate $\barH_k$~used~to~reach a second-order stationary point (i.e., constructed under $\alpha=1$) is upper bounded. 

\begin{lemma}\label{lemma:bounded_Hessian}

Under Assumption \ref{ass:1-1} with $\alpha=1$, there exists a positive constant $\kappa_B\geq 1$ such that $\|\barH_k\|\leq \kappa_B$ on the event $\A_k\cap\B_k$.
\end{lemma}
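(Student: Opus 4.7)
The plan is to unpack the definition $\barH_k = \bar{\nabla}^2 f_k + \sum_{i=1}^m \barblambda_k^i \nabla^2 c_k^i$ and bound each of the two summands by a deterministic constant on the event $\A_k \cap \B_k$, exploiting the fact that the radii $\Delta_k$ satisfy the a priori cap $\Delta_k \leq \Delta_{\max}$ (which follows by induction from the update rule $\Delta_{k+1} \leq \min\{\gamma \Delta_k, \Delta_{\max}\}$ on successful steps and the division by $\gamma > 1$ on unsuccessful steps, given that $\Delta_0 \in (0, \Delta_{\max})$).

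First I would control the Hessian term. On $\A_k$ the accuracy condition \eqref{def:Ak} (with $\alpha = 1$) combined with $\Delta_k \leq \Delta_{\max}$ and Assumption \ref{ass:1-1} gives
\begin{equation*}
\|\bar{\nabla}^2 f_k\| \leq \|\nabla^2 f_k\| + \kappa_h \Delta_k \leq L_{\nabla f} + \kappa_h \Delta_{\max}.
\end{equation*}
Next I would control the estimated multiplier. On $\B_k$, condition \eqref{def:Bk} with $\alpha=1$ gives $\|\barg_k\| \leq \|g_k\| + \kappa_g \Delta_k^2 \leq \kappa_{\nabla f} + \kappa_g \Delta_{\max}^2$, and then
\begin{equation*}
\|\barblambda_k\| = \|[G_k G_k^T]^{-1} G_k \barg_k\| \leq \frac{1}{\sqrt{\kappa_{1,G}}} \bigl(\kappa_{\nabla f} + \kappa_g \Delta_{\max}^2\bigr),
\end{equation*}
using the bound $\|G_k^T [G_kG_k^T]^{-1}\| \leq 1/\sqrt{\kappa_{1,G}}$ noted just after Assumption \ref{ass:1-1}.

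Finally I would combine the two via the triangle inequality and Cauchy–Schwarz (bounding $\sum_i |\barblambda_k^i| \leq \sqrt{m}\,\|\barblambda_k\|$), together with $\|\nabla^2 c_k^i\| \leq L_G$, to obtain
\begin{equation*}
\|\barH_k\| \leq L_{\nabla f} + \kappa_h \Delta_{\max} + \frac{\sqrt{m}\, L_G}{\sqrt{\kappa_{1,G}}} \bigl(\kappa_{\nabla f} + \kappa_g \Delta_{\max}^2\bigr),
\end{equation*}
and define $\kappa_B$ to be the maximum of $1$ and the right-hand side.

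This proof is essentially a bookkeeping exercise; there is no real obstacle. The only non-automatic ingredient is asserting $\Delta_k \leq \Delta_{\max}$ deterministically, which I would either invoke as an inductive consequence of the algorithm's update rule or, if convenient, state as an elementary preliminary lemma used throughout Section~\ref{sec:4}.
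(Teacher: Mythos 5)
Your proof is correct and follows essentially the same route as the paper's: both use the triangle inequality on $\barH_k = \bar{\nabla}^2 f_k + \sum_i \barblambda_k^i \nabla^2 c_k^i$, the accuracy events $\A_k$ and $\B_k$ together with $\Delta_k \leq \Delta_{\max}$, and the bound $\|[G_kG_k^T]^{-1}G_k\|\leq 1/\sqrt{\kappa_{1,G}}$, arriving at the identical constant $\kappa_B=\max\{1,\kappa_h\Delta_{\max}+L_{\nabla f}+\sqrt{m}L_G(\kappa_g\Delta_{\max}^2+\kappa_{\nabla f})/\sqrt{\kappa_{1,G}}\}$. The only cosmetic difference is that you bound $\|\barblambda_k\|$ directly while the paper splits it as $\|\barblambda_k-\blambda_k\|+\|\blambda_k\|$; the two groupings are equivalent.
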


\begin{proof}

Recall that $\barH_k = \bar{\nabla}^2f_k+\sum_{i=1}^m\barblambda_k^i\nabla^2c^i_k$, we have
\begin{align}\label{eq:lemma1_1}
\|\barH_k\| 
& \leq \|\bar{\nabla}^2f_k-\nabla^2f_k\|+\|\nabla^2f_k\| + \|\sum_{i=1}^m(\barblambda_k^i-\blambda_k^i)\nabla^2c^i_k\| +\|\sum_{i=1}^m\blambda_k^i\nabla^2c^i_k\| \nonumber\\
& \leq \|\bar{\nabla}^2f_k-\nabla^2f_k\|+\|\nabla^2f_k\|+\|\barblambda_k-\blambda_k\| \big\{\sum_{i=1}^m\|\nabla^2c^i_k\|^2\big\}^{1/2}+\|\blambda_k\| \big\{\sum_{i=1}^m\|\nabla^2c^i_k\| \big\}^{1/2}\nonumber\\
& \leq \|\bar{\nabla}^2f_k-\nabla^2f_k\|+L_{\nabla f}+\frac{\sqrt{m} L_G}{\sqrt{\kappa_{1,G}}}\|\barg_k-g_k\|+\frac{\sqrt{m} L_G\kappa_{\nabla f}}{\sqrt{\kappa_{1,G}}},
\end{align}
where the last inequality follows from Assumption~\ref{ass:1-1} and the definitions of $\blambda_k$ and $\barblambda_k$. On the~event $\A_k\cap\B_k$, $\|\bar{\nabla}^2f_k - \nabla^2 f_k\|\leq\kappa_h\Delta_k$ and $\|\barg_k - g_k\|\leq\kappa_g\Delta_k^2$. Since $\Delta_k\leq\Delta_{\max}$, it follows that 
\begin{equation*}
\|\barH_k\| \leq 
\kappa_h\Delta_{\max}+L_{\nabla f}+\frac{\sqrt{m} L_G}{\sqrt{\kappa_{1,G}}}(\kappa_g\Delta_{\max}^2+\kappa_{\nabla f}).
\end{equation*}
We complete the proof by setting $\kappa_B=\max\{1,\kappa_h\Delta_{\max}+L_{\nabla f}+\sqrt{m} L_G/\sqrt{\kappa_{1,G}}\cdot(\kappa_g\Delta_{\max}^2+\kappa_{\nabla f})\}$.
\end{proof}

We demonstrate in the next lemma that for second-order stationarity, the difference between~the true Lagrangian Hessian $\nabla_{\bx}^2\L_k$ and its estimate $\barH_k$ is bounded by a quantity proportional to $\Delta_k$~on the event $\A_k\cap\B_k$. Furthermore, the difference between the eigenvalue $\tau_k^+\coloneqq |\min\{\tau_k,0\}|$~and~its~estimate $\bartau_k^+$ is bounded by the same quantity. This lemma ensures that when both the objective~gradient and Hessian estimates are accurate, the estimate of $\tau_k^+$ is also precise.

\begin{lemma}\label{lemma:tau_accurate} 

Under Assumption \ref{ass:1-1} with $\alpha=1$, there exists a positive constant $\kappa_H>0$ such that $\|\nabla_{\bx}^2\L_k-\barH_k\| \leq \kappa_H\Delta_k$ and $| \tau_k^+-\bartau_k^+|\leq\kappa_H\Delta_k$ on the event $\A_k\cap\B_k$.

\end{lemma}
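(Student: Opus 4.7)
The plan is to mirror the decomposition used in Lemma \ref{lemma:bounded_Hessian} but applied now to the \emph{difference} $\nabla_\bx^2\mL_k - \barH_k$, and then transfer the bound to the reduced Hessian eigenvalues via Weyl's inequality.

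First, I would subtract the defining expressions
\begin{equation*}
\nabla_\bx^2\mL_k = \nabla^2 f_k + \sum_{i=1}^m \blambda_k^i \nabla^2 c_k^i,
\qquad
\barH_k = \bar\nabla^2 f_k + \sum_{i=1}^m \barblambda_k^i \nabla^2 c_k^i,
\end{equation*}
and apply the triangle inequality to obtain
\begin{equation*}
\|\nabla_\bx^2\mL_k - \barH_k\| \leq \|\bar\nabla^2 f_k - \nabla^2 f_k\| + \|\barblambda_k - \blambda_k\|\,\Bigl\{\sum_{i=1}^m \|\nabla^2 c_k^i\|^2\Bigr\}^{1/2}.
\end{equation*}
On $\A_k$ the first term is bounded by $\kappa_h \Delta_k$, and Assumption \ref{ass:1-1} gives $\|\nabla^2 c_k^i\|\le L_G$, so the sum factor is at most $\sqrt{m}\,L_G$. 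For the multiplier difference, I would use $\barblambda_k - \blambda_k = -[G_kG_k^T]^{-1}G_k(\barg_k - g_k)$ together with the paper's bound $\|G_k^T[G_kG_k^T]^{-1}\|\le 1/\sqrt{\kappa_{1,G}}$ and the event $\B_k$ (with $\alpha=1$, so $\|\barg_k-g_k\|\le \kappa_g\Delta_k^2$), which yields $\|\barblambda_k-\blambda_k\|\le \kappa_g\Delta_k^2/\sqrt{\kappa_{1,G}}\le (\kappa_g\Delta_{\max}/\sqrt{\kappa_{1,G}})\,\Delta_k$. Combining these estimates gives the first claim with
\begin{equation*}
\kappa_H \;\geq\; \kappa_h + \frac{\sqrt{m}\,L_G\,\kappa_g\Delta_{\max}}{\sqrt{\kappa_{1,G}}}.
\end{equation*}

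For the second claim, I would recall that $\tau_k$ and $\bartau_k$ are the smallest eigenvalues of the reduced Hessians $Z_k^T\nabla_\bx^2\mL_k Z_k$ and $Z_k^T\barH_k Z_k$, respectively. Since $Z_k^TZ_k=I$ implies $\|Z_k\|=1$, Weyl's eigenvalue perturbation inequality yields
\begin{equation*}
|\tau_k - \bartau_k| \leq \|Z_k^T(\nabla_\bx^2\mL_k - \barH_k)Z_k\| \leq \|\nabla_\bx^2\mL_k - \barH_k\| \leq \kappa_H\Delta_k.
\end{equation*}
Finally, the map $t\mapsto |\min\{t,0\}|=\max\{-t,0\}$ is $1$-Lipschitz, so $|\tau_k^+ - \bartau_k^+|\le |\tau_k-\bartau_k|\le \kappa_H\Delta_k$, completing the proof.

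I do not expect any real obstacle: the argument is a short triangle-inequality computation plus one invocation of Weyl's inequality, and it closely parallels the proof of Lemma \ref{lemma:bounded_Hessian}. The only mild subtlety is remembering to use $\alpha=1$ so that $\B_k$ delivers the quadratic-in-$\Delta_k$ gradient error needed to produce a linear-in-$\Delta_k$ multiplier error after one factor of $\Delta_{\max}$ is absorbed.
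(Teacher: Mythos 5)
Your proposal is correct and follows essentially the same route as the paper: the same triangle-inequality decomposition of $\nabla_{\bx}^2\L_k-\barH_k$ yielding the same constant $\kappa_H$, and the eigenvalue comparison via Weyl's inequality is just a packaged form of the Rayleigh-quotient argument the paper writes out explicitly. The final Lipschitz step for $t\mapsto|\min\{t,0\}|$ is a nice touch that the paper leaves implicit.
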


\begin{proof}

We have
\begin{align*}
\|\nabla_{\bx}^2\L_k-\barH_k\| & = \|\nabla^2f_k-\bar{\nabla}^2f_k+\sum_{i=1}^m(\blambda_k^i-\barblambda_k^i)\nabla^2c_k^i\| \leq \|\nabla^2f_k-\bar{\nabla}^2f_k\|+\|\barblambda_k-\blambda_k\|\big\{\sum_{i=1}^m\|\nabla^2c_k^i\|^2\}^{1/2} \\
& \leq \|\nabla^2f_k-\bar{\nabla}^2f_k\|+\frac{\sqrt{m} L_G}{\sqrt{\kappa_{1,G}}}\|g_k-\barg_k\| \quad\quad (\text{Assumption~\ref{ass:1-1}})\\
& \leq \kappa_h\Delta_k+\frac{\sqrt{m}\kappa_gL_G}{\sqrt{\kappa_{1,G}}}\Delta_k^2  \leq\left(\kappa_h+\frac{\sqrt{m}\kappa_gL_G\Delta_{\max}}{\sqrt{\kappa_{1,G}}}\right)\Delta_k \eqqcolon \kappa_H\Delta_k,
\end{align*}
where the fourth inequality is due to the event $\A_k\cap\B_k$. Next, we show $|\tau_k-\bartau_k|\leq\kappa_H\Delta_k$. Let $\barbzeta_k$~be a normalized eigenvector corresponding to $\bartau_k$, then
\begin{equation*}
\tau_k-\bartau_k \leq \barbzeta_k^T\left[Z_k^T(\nabla_{\bx}^2\L_k-\barH_k)Z_k\right]\barbzeta_k \leq\|\nabla_{\bx}^2\L_k-\barH_k\|\leq\kappa_H\Delta_k.
\end{equation*}
Let $\bzeta_k$ be a normalized eigenvector corresponding to $\tau_k$, then
\begin{equation*}
\bartau_k-\tau_k
\leq \bzeta_k^T\left[Z_k^T(\barH_k-\nabla_{\bx}^2\L_k)Z_k\right]\bzeta_k
\leq\|\nabla_{\bx}^2\L_k-\barH_k\|\leq\kappa_H\Delta_k.
\end{equation*}
Combining the last two displays, we have $|\tau_k-\bartau_k|\leq\kappa_H\Delta_k$, which implies $|\tau_k^+-\bartau_k^+|\leq\kappa_H\Delta_k$.
\end{proof}

In the following lemma, we demonstrate that when the current iterate $\bx_k$ is a neither \mbox{first-order}~nor second-order stationary point (i.e., $\|\nabla\L_k\| > 0$ or $\tau_k^+ > 0$), and the estimates of both objective gradients and Hessians are accurate, then Line 6 of Algorithm \ref{Alg:STORM} will not be triggered~(i.e.,~\eqref{def:acc1}~holds)~for~sufficiently small trust-region radius. For the sake of notational consistency, we assume $\A_k$ also holds~for $\alpha=0$, although we do not use objective Hessian estimates for the design of first-order stationarity.\;\;\;\;

\begin{lemma}\label{lemma:line6_not_hold}

Under Assumption \ref{ass:1-1} and the event $\A_k\cap\B_k$, if either
\begin{equation}\label{eq:lemma3:delta}
\|\nabla\L_k\|\geq (\kappa_g\max\{1,\Delta_{\max}\}+\eta\kappa_B) \cdot \Delta_k\quad\quad\text{or}\quad\quad\tau_k^+\geq (\kappa_H+\eta)\cdot \Delta_k,
\end{equation}
then Line 6 of Algorithm \ref{Alg:STORM} will not be triggered.
\end{lemma}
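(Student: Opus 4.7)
The plan is to show that the hypothesis \eqref{eq:lemma3:delta} forces the condition \eqref{def:acc1} to hold, in which case Line 6 (the unsuccessful branch triggered when \eqref{def:acc1} fails) is skipped. I would split into the two cases suggested by the ``or'' in \eqref{eq:lemma3:delta}.

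For the first case ($\|\nabla\L_k\| \geq (\kappa_g\max\{1,\Delta_{\max}\}+\eta\kappa_B)\Delta_k$), my goal is to transfer the lower bound on $\|\nabla\L_k\|$ to a lower bound on $\|\bar{\nabla}\L_k\|/\max\{1,\|\barH_k\|\}$. The key algebraic observation is that the ``$c_k$'' component of $\nabla\L_k$ and $\bar{\nabla}\L_k$ is identical, so only the primal part $\nabla_{\bx}\L_k - \bar{\nabla}_{\bx}\L_k$ contributes to the discrepancy. Substituting the closed-form multipliers $\blambda_k=-[G_kG_k^T]^{-1}G_k g_k$ and $\barblambda_k=-[G_kG_k^T]^{-1}G_k\barg_k$, one gets $\bar{\nabla}_{\bx}\L_k - \nabla_{\bx}\L_k = P_k(\barg_k - g_k)$, where $P_k = I - G_k^T[G_kG_k^T]^{-1}G_k$ is the null-space projector. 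Since $\|P_k\|\leq 1$, the event $\B_k$ yields $\|\bar{\nabla}\L_k - \nabla\L_k\|\leq\kappa_g\Delta_k^{\alpha+1}\leq \kappa_g\max\{1,\Delta_{\max}\}\Delta_k$. A reverse triangle inequality then gives $\|\bar{\nabla}\L_k\|\geq\eta\kappa_B\Delta_k$. Finally, $\|\barH_k\|\leq\kappa_B$ holds either by hypothesis (when $\alpha=0$) or by Lemma~\ref{lemma:bounded_Hessian} on $\A_k\cap\B_k$ (when $\alpha=1$), and since $\kappa_B\geq 1$ we obtain $\|\bar{\nabla}\L_k\|/\max\{1,\|\barH_k\|\}\geq \eta\Delta_k$, so \eqref{def:acc1} holds.

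For the second case ($\tau_k^+\geq(\kappa_H+\eta)\Delta_k$), only $\alpha=1$ is relevant (otherwise $\tau_k^+$ is not tracked). I would invoke Lemma~\ref{lemma:tau_accurate} directly: on $\A_k\cap\B_k$, $|\tau_k^+ - \bartau_k^+|\leq \kappa_H\Delta_k$, hence $\bartau_k^+ \geq \tau_k^+ - \kappa_H\Delta_k \geq \eta\Delta_k$, again making \eqref{def:acc1} hold.

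I do not expect a real obstacle here; the proof is essentially a ``perturbation'' bookkeeping exercise. The one subtlety worth stating carefully is the reduction of $\|\bar{\nabla}_{\bx}\L_k - \nabla_{\bx}\L_k\|$ to $\|P_k(\barg_k-g_k)\|$ --- without this cancellation one would inherit an extra $1/\sqrt{\kappa_{1,G}}$-type factor from the multiplier estimation error, and the stated constant $\kappa_g\max\{1,\Delta_{\max}\}+\eta\kappa_B$ in the hypothesis would not suffice. The other small care point is the bound $\Delta_k^{\alpha+1}\leq\max\{1,\Delta_{\max}\}\Delta_k$, which uniformly covers both $\alpha=0$ and $\alpha=1$ and justifies the constant $\kappa_g\max\{1,\Delta_{\max}\}$ appearing in \eqref{eq:lemma3:delta}.
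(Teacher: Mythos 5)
Your proof is correct and follows essentially the same route as the paper's: transfer the lower bound in \eqref{eq:lemma3:delta} to the estimated quantities via the gradient accuracy on $\B_k$ (together with Lemma \ref{lemma:bounded_Hessian} for $\|\barH_k\|\leq\kappa_B$ and Lemma \ref{lemma:tau_accurate} for $|\tau_k^+-\bartau_k^+|\leq\kappa_H\Delta_k$), concluding that \eqref{def:acc1} holds. The only difference is that you spell out the projection identity $\bar{\nabla}_{\bx}\L_k-\nabla_{\bx}\L_k=P_k(\barg_k-g_k)$, which the paper uses implicitly in its one-line bound $\|\nabla\L_k\|-\|\bar{\nabla}\L_k\|\leq\|g_k-\barg_k\|$.
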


\begin{proof}

On the event $\A_k\cap\B_k$, we have $\|\nabla\L_k\|-\|\bar{\nabla}\L_k\| \leq \|g_k-\barg_k\|\leq \kappa_g\max\{1,\Delta_{\max}\}\Delta_k$, $\|\barH_k\| \leq\kappa_B$ (cf. Assumption~\ref{ass:1-1} and Lemma \ref{lemma:bounded_Hessian}), and $\tau_k^+-\bartau_k^+ \leq \kappa_H\Delta_k$ (cf. Lemma \ref{lemma:tau_accurate}). Consequently, \eqref{eq:lemma3:delta} results in either
\begin{equation*}
\frac{\|\bar{\nabla}\L_k\|}{\max\{1,\|\barH_k\|\}}\geq \eta  \cdot\Delta_k\quad\quad\text{or}\quad\quad \bartau_k^+ \geq \eta \cdot \Delta_k.
\end{equation*}
Thus, Line 6 will not be triggered.
\end{proof}

Let us define $\L_{\barmu_{k}}^{s_k} \coloneqq \L_{\barmu_{k}}(\bx_{s_k})$ and $\L_{\barmu_{k}}^{k} \coloneqq \L_{\barmu_{k}}(\bx_{k})$, where $\barmu_k$ is the merit parameter selected in the $k$-th iteration. Here, $\bx_{s_k}=\bx_k+\Delta\bx_k$ if the SOC step is not performed and $\bx_{s_k}=\bx_k+\Delta\bx_k+\bd_k$~if the SOC step is performed. The following two lemmas examine the difference between the reduction~in~the merit function (i.e., $\L_{\barmu_{k}}^{s_k}-\L_{\barmu_{k}}^{k}$) and $\text{Pred}_k$ (see \eqref{def:Pred_k}). 
We first show that on the event $\A_k\cap\B_k$,~when the SOC step is not performed, the difference has an upper bound proportional to $\Delta_k^2$.

\begin{lemma}\label{lemma:diff_ared_pred_wo_corr_step}

Under Assumptions \ref{ass:1-1}, \ref{ass:4-1}, and the event $\A_k\cap\B_k$, when the SOC step is not~performed, we have $\forall k \geq \barK$,
\begin{equation*}
\big|\L_{\barmu_{\barK}}^{s_k}-\L_{\barmu_{\barK}}^{k}-\text{Pred}_k\big| \leq \Upsilon_1\Delta_k^2,
\end{equation*}
where $\Upsilon_1=\kappa_g\max\{1,\Delta_{\max}\}+\frac{1}{2}(L_{\nabla f}+\kappa_B+\hatmu L_G)$.
\end{lemma}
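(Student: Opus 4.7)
The plan is a standard Taylor expansion argument, splitting the quantity of interest into an objective part and a constraint-violation part, and bounding each using Lipschitz smoothness together with the gradient-accuracy event $\mathcal{B}_k$ and the Hessian boundedness established earlier.

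First, since $k \geq \barK$ gives $\barmu_k = \barmu_{\barK}$ and $\bx_{s_k} = \bx_k + \Delta\bx_k$ (no SOC), I would expand
\begin{equation*}
\L_{\barmu_{\barK}}^{s_k}-\L_{\barmu_{\barK}}^{k}-\text{Pred}_k = \underbrace{\bigl[f(\bx_k+\Delta\bx_k) - f_k - \barg_k^T\Delta\bx_k - \tfrac{1}{2}\Delta\bx_k^T\barH_k\Delta\bx_k\bigr]}_{(\mathrm{I})} + \barmu_{\barK}\underbrace{\bigl[\|c(\bx_k+\Delta\bx_k)\|-\|c_k+G_k\Delta\bx_k\|\bigr]}_{(\mathrm{II})}.
\end{equation*}

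Next, I would bound $(\mathrm{I})$. Adding and subtracting $g_k^T\Delta\bx_k$, the triangle inequality gives
\begin{equation*}
|(\mathrm{I})| \leq |(g_k-\barg_k)^T\Delta\bx_k| + |f(\bx_k+\Delta\bx_k) - f_k - g_k^T\Delta\bx_k| + \tfrac{1}{2}\|\barH_k\|\,\|\Delta\bx_k\|^2.
\end{equation*}
The first term is bounded by $\|\barg_k - g_k\|\,\|\Delta\bx_k\|$; on $\B_k$, $\|\barg_k - g_k\|\leq \kappa_g\Delta_k^{\alpha+1} \leq \kappa_g\max\{1,\Delta_{\max}\}\Delta_k$, and $\|\Delta\bx_k\|\leq\Delta_k$ by the trust-region constraint, so this contributes at most $\kappa_g\max\{1,\Delta_{\max}\}\Delta_k^2$. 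The second term is at most $\frac{L_{\nabla f}}{2}\Delta_k^2$ by the standard Taylor remainder from $L_{\nabla f}$-Lipschitzness of $\nabla f$. The third uses $\|\barH_k\|\leq\kappa_B$, which holds directly under Assumption~\ref{ass:1-1} for $\alpha=0$ and via Lemma~\ref{lemma:bounded_Hessian} (on $\A_k\cap\B_k$) for $\alpha=1$, giving $\frac{\kappa_B}{2}\Delta_k^2$.

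For $(\mathrm{II})$, by the reverse triangle inequality
\begin{equation*}
\bigl|\|c(\bx_k+\Delta\bx_k)\| - \|c_k + G_k\Delta\bx_k\|\bigr| \leq \|c(\bx_k+\Delta\bx_k) - c_k - G_k\Delta\bx_k\| \leq \tfrac{L_G}{2}\|\Delta\bx_k\|^2 \leq \tfrac{L_G}{2}\Delta_k^2,
\end{equation*}
where the middle inequality follows from $L_G$-Lipschitzness of $G$ via $c(\bx_k+\Delta\bx_k)-c_k-G_k\Delta\bx_k = \int_0^1[G(\bx_k+t\Delta\bx_k)-G_k]\Delta\bx_k\,dt$. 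Multiplying by $\barmu_{\barK}\leq\hatmu$ from Assumption~\ref{ass:4-1} yields $\barmu_{\barK}|(\mathrm{II})|\leq\frac{\hatmu L_G}{2}\Delta_k^2$. Summing the four contributions produces exactly $\Upsilon_1\Delta_k^2$.

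There is no real obstacle here; it is a routine Taylor-remainder estimate. The only subtlety worth flagging is why the Hessian bound $\|\barH_k\|\leq\kappa_B$ is uniformly valid across $\alpha\in\{0,1\}$: for $\alpha=0$ it is assumed, and for $\alpha=1$ it requires conditioning on $\A_k\cap\B_k$, which is precisely the event under which the lemma is stated.
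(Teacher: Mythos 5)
Your proposal is correct and follows essentially the same route as the paper's proof: the same decomposition into an objective Taylor-remainder term and a constraint-linearization term, the same use of $\B_k$ (with the uniform bound $\kappa_g\Delta_k^{\alpha+1}\leq\kappa_g\max\{1,\Delta_{\max}\}\Delta_k$), of $\|\barH_k\|\leq\kappa_B$, and of $\barmu_{\barK}\leq\hatmu$. The remark on why the Hessian bound holds for both values of $\alpha$ is a correct and worthwhile clarification that the paper leaves implicit.
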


\begin{proof}

Since the SOC step is not performed, $\bx_{s_k}=\bx_k+\Delta\bx_k$. Combining \eqref{def:det_merit_fun} and \eqref{def:Pred_k}, we have
\begin{equation*}
\big|\L_{\barmu_{\barK}}^{s_k}-\L_{\barmu_{\barK}}^{k}-\text{Pred}_k\big| = \left|f_{s_k}+\barmu_{\barK}\|c_{s_k}\|-f_k-\barg_k^T\Delta\bx_k-\frac{1}{2}\Delta\bx_k^T\barH_k\Delta\bx_k-\barmu_{\barK}\|c_k+G_k\Delta\bx_k\|\right|.
\end{equation*} 
By the Taylor expansion of $f(\bx)$ and the Lipschitz continuity of $\nabla f(\bx)$, we have
\begin{equation*}
f_{s_k}-f_k-\barg_k^T\Delta\bx_k \leq (g_k-\barg_k)^T\Delta\bx_k+\frac{1}{2}L_{\nabla f}\|\Delta\bx_k\|^2.
\end{equation*}
Similarly, we have
\begin{equation*}
\big|\|c_{s_k}\|-\|c_k+G_k\Delta\bx_k\|\big|\leq \|c_{s_k}-c_k-G_k\Delta\bx_k\|\leq \frac{1}{2}L_G\|\Delta\bx_k\|^2.
\end{equation*}
Recall that $\barmu_{\barK}\leq\hatmu$ (cf.~Assumption~\ref{ass:4-1}) and $\|\barH_k\|\leq\kappa_B$ (cf.~Assumption~\ref{ass:1-1} and Lemma~\ref{lemma:bounded_Hessian}),~combining the above two displays leads to
\begin{equation*}
\big|\L_{\barmu_{\barK}}^{s_k}-\L_{\barmu_{\barK}}^{k}-\text{Pred}_k\big| \leq \|g_k-\bar{g}_k\|\|\Delta\bx_k\|+\frac{1}{2}(L_{\nabla f}+\kappa_B+\hatmu L_G)\|\Delta\bx_k\|^2. 
\end{equation*}
On the event $\B_k$, we have $\|g_k-\bar{g}_k\|\leq\kappa_g\max\{1,\Delta_{\max}\}\Delta_k$. Since $\|\Delta\bx_k\|\leq \Delta_k$, the result~follows~from the display above and we complete the proof.
\end{proof}

Next, we show that on the event $\A_k\cap\B_k$, when the SOC step is performed, the bound in Lemma \ref{lemma:diff_ared_pred_wo_corr_step} is strengthened to $\Delta_k^3$.

\begin{lemma}\label{lemma:diff_ared_pred_w_corr_step}

Under Assumptions \ref{ass:1-1}, \ref{ass:4-1}, and the event $\A_k\cap\B_k$, when the SOC step is performed,~we have $\forall k\geq\barK$,
\begin{equation}\label{eq:diff_ared_pred}
\big|\L_{\barmu_{\barK}}^{s_k}-\L_{\barmu_{\barK}}^{k}-\text{Pred}_k\big|\leq \Upsilon_2\Delta_k^3,
\end{equation}
where 
\begin{multline*}
\Upsilon_2 = \kappa_g+\frac{L_{\nabla^2 f}+\kappa_h}{2}
+\frac{L_G^2\Delta_{\max}(0.5L_{\nabla f} + \sqrt{m}\hat{\mu}L_G)}{\kappa_{1,G}}\\
+ \frac{0.5\sqrt{m}L_{\nabla^2 c}(L_{\nabla f}\Delta_{\max} + \kappa_{\nabla f}) + 0.5 \sqrt{m}L_G(\kappa_g\Delta_{\max} + L_{\nabla f} + 2\hat{\mu}L_G) }{\sqrt{\kappa_{1,G}}}.
\end{multline*}
\end{lemma}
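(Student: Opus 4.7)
The plan is to split
\[
\L_{\barmu_{\barK}}^{s_k} - \L_{\barmu_{\barK}}^{k} - \text{Pred}_k = \Big[f_{s_k} - f_k - \barg_k^T\Delta\bx_k - \tfrac{1}{2}\Delta\bx_k^T\barH_k\Delta\bx_k\Big] + \barmu_{\barK}\Big[\|c_{s_k}\| - \|c_k + G_k\Delta\bx_k\|\Big]
\]
and bound each bracket by $O(\Delta_k^3)$ separately, exploiting the small size of the correction step $\bd_k$. The preliminary step is to use the definition \eqref{def:correctional_step} together with Lipschitz continuity of $G$ (which yields $\|c(\bx_k+\Delta\bx_k) - c_k - G_k\Delta\bx_k\| \leq L_G\|\Delta\bx_k\|^2/2$) to conclude $\|\bd_k\| \leq \frac{L_G}{2\sqrt{\kappa_{1,G}}}\|\Delta\bx_k\|^2$, so that $\|\bs\| \coloneqq \|\Delta\bx_k + \bd_k\|$ is within a constant of $\Delta_k$.

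For the merit bracket, the construction of $\bd_k$ gives the exact identity $c(\bx_k+\Delta\bx_k) + G_k\bd_k = c_k + G_k\Delta\bx_k$. Taylor-expanding $c$ at $\bx_k + \Delta\bx_k$ then produces
\[
c_{s_k} - (c_k + G_k\Delta\bx_k) = \big(G(\bx_k+\Delta\bx_k) - G_k\big)\bd_k + E, \qquad \|E\| \leq \tfrac{L_G}{2}\|\bd_k\|^2,
\]
so the reverse triangle inequality, combined with Lipschitz continuity of $G$ and the $\|\bd_k\| = O(\Delta_k^2)$ estimate, gives $\big|\|c_{s_k}\| - \|c_k + G_k\Delta\bx_k\|\big| = O(\Delta_k^3)$. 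Multiplying by $\barmu_{\barK} \leq \hatmu$ keeps the merit bracket at $O(\Delta_k^3)$ and, as a bonus, also controls the multiplier-constraint residual that will appear in the $f$-bracket.

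For the $f$-bracket, a direct Taylor expansion of $f$ would leave the term $g_k^T\bd_k$ of size $O(\|\bd_k\|) = O(\Delta_k^2)$, which is too large. The remedy is to route through the Lagrangian at $\barblambda_k$,
\[
f_{s_k} - f_k = \L(\bx_{s_k},\barblambda_k) - \L(\bx_k,\barblambda_k) - \barblambda_k^T(c_{s_k} - c_k),
\]
and expand $\L(\cdot,\barblambda_k)$ in $\bx$ to second order with a Lipschitz-Hessian remainder of order $\tfrac{1}{6}(L_{\nabla^2 f} + \sqrt{m}\|\barblambda_k\|L_{\nabla^2 c})\|\bs\|^3$. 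The key cancellation is that $\bnabla_\bx\L_k = \barg_k + G_k^T\barblambda_k$ lies in $\ker(G_k)$ (because $\barblambda_k = -[G_kG_k^T]^{-1}G_k\barg_k$), while $\bd_k \in \mathrm{im}(G_k^T)$, so $\bnabla_\bx\L_k^T\bd_k = 0$. All the first-order contributions then collapse to $(g_k - \barg_k)^T\bs$ plus the combined residual $-\barblambda_k^T(c_{s_k} - c_k - G_k\Delta\bx_k)$. Using $\B_k$ (so $\|g_k-\barg_k\|\leq \kappa_g\Delta_k^2$ for $\alpha=1$), $\A_k$ (so $\|\nabla^2 f_k - \bar{\nabla}^2 f_k\|\leq \kappa_h\Delta_k$ controls the Hessian mismatch $\tfrac{1}{2}\bs^T(\nabla^2 f_k + \sum_i\barblambda_k^i\nabla^2 c_k^i - \barH_k)\bs$), the bound $\|\barH_k\|\leq \kappa_B$ (absorbing $\tfrac{1}{2}[\bs^T\barH_k\bs - \Delta\bx_k^T\barH_k\Delta\bx_k] = \Delta\bx_k^T\barH_k\bd_k + \tfrac{1}{2}\bd_k^T\barH_k\bd_k$), and the preceding paragraph's bound on $\|c_{s_k} - c_k - G_k\Delta\bx_k\|$, each remaining term is $O(\Delta_k^3)$.

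The main obstacle will be bookkeeping: tracking the constants $L_G, L_{\nabla f}, L_{\nabla^2 f}, L_{\nabla^2 c}, \kappa_h, \kappa_g, \kappa_B, \kappa_{1,G}, \hatmu$, the multiplier bound $\|\barblambda_k\|\leq (\kappa_{\nabla f} + \kappa_g\Delta_{\max}^2)/\sqrt{\kappa_{1,G}}$, and the $\Delta_k \leq \Delta_{\max}$ factors through every residual to assemble the explicit $\Upsilon_2$ stated in the lemma. The conceptually nontrivial step is recognizing that a direct expansion of $f$ cannot reach $O(\Delta_k^3)$, and that the Lagrangian reformulation combined with the orthogonality $\bnabla_\bx\L_k \perp \bd_k$---which is precisely why $\bd_k$ is engineered to lie in $\mathrm{im}(G_k^T)$---is what restores the improved accuracy.
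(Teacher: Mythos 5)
Your proposal is correct, and the constraint bracket is handled exactly as in the paper (via the identity $c(\bx_k+\Delta\bx_k)+G_k\bd_k=c_k+G_k\Delta\bx_k$ and $\|\bd_k\|=O(\Delta_k^2)$). For the $f$-bracket, however, you take a genuinely different route. The paper never rewrites $f$ as a Lagrangian: it expands $f_{s_k}$ in two stages (around $\bx_k+\Delta\bx_k$, then around $\bx_k$) and handles the problematic $O(\Delta_k^2)$ term $\nabla f(\bx_k+\Delta\bx_k)^T\bd_k$ by substituting the definition of $\bd_k$, which converts it into $\tilde{\blambda}_k^T[c(\bx_k+\Delta\bx_k)-c_k-G_k\Delta\bx_k]=\tfrac{1}{2}\sum_i\tilde{\blambda}_k^i\Delta\bx_k^T\nabla^2c^i(\phi_3^i)\Delta\bx_k$ with $\tilde{\blambda}_k=-[G_kG_k^T]^{-1}G_k\nabla f(\bx_k+\Delta\bx_k)$; this quadratic then cancels against the $\sum_i\barblambda_k^i\nabla^2c_k^i$ part of $\barH_k$ up to $O(\Delta_k^3)$ since $\|\tilde{\blambda}_k-\barblambda_k\|=O(\Delta_k)$. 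Your Lagrangian reformulation achieves the same cancellation more structurally: the orthogonality $\bnabla_\bx\mL_k=P_k\barg_k\perp\bd_k\in\mathrm{im}(G_k^T)$ kills the first-order contribution of $\bd_k$ outright, and the constraint curvature is absorbed because $\sum_i\barblambda_k^i\nabla^2c_k^i$ already sits inside $\nabla^2_\bx\mL(\cdot,\barblambda_k)$. Both arguments exploit the same two facts ($\bd_k\in\mathrm{im}(G_k^T)$ and the presence of the constraint Hessians in $\barH_k$); yours is arguably cleaner, the paper's keeps $\Delta\bx_k$ and $\bd_k$ separate throughout. One consequence worth flagging: your expansion in the combined step $\bs=\Delta\bx_k+\bd_k$ produces the cross term $\Delta\bx_k^T\barH_k\bd_k$, which you bound via $\|\barH_k\|\leq\kappa_B$, so your explicit constant will contain $\kappa_B$ and will not literally equal the stated $\Upsilon_2$ (which is $\kappa_B$-free). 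Since $\Upsilon_2$ is only used downstream as a generic constant, this is immaterial, but if you want to reproduce the stated constant you must keep the two steps separate as the paper does.
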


\begin{proof}

We have	
\begin{align}\label{eq:lemma_diff_w_soc_1}
\big|\L_{\barmu_{\barK}}^{s_k}-\L_{\barmu_{\barK}}^{k}-\text{Pred}_k\big|
& = \big|f_{s_k}+\barmu_{\barK}\|c_{s_k}\|-f_k-\barg_k^T\Delta\bx_k-\frac{1}{2} \Delta\bx_k^T\barH_k\Delta\bx_k-\barmu_{\barK}\|c_k+G_k\Delta\bx_k\|\big|  \nonumber \\
& \leq
\big|f_{s_k}-f_k-\barg_k^T\Delta\bx_k-\frac{1}{2} \Delta\bx_k^T\barH_k\Delta\bx_k\big|+\hatmu\| c_{s_k}-c_k-G_k\Delta\bx_k\|,
\end{align}
where we have used Assumption~\ref{ass:4-1}. First, we analyze the second term in \eqref{eq:lemma_diff_w_soc_1}.~Since the SOC~step~is performed, we have $\bx_{s_k}=\bx_k+\Delta\bx_k+\bd_k$. For $1\leq i\leq m$, by the Taylor expansion, we obtain
\begin{equation*}
|c_{s_k}^i- c_k^i - (\nabla c_k^i)^T\Delta\bx_k| \stackrel{\eqref{def:correctional_step}}{=} |c_{s_k}^i - c^i(\bx_k+\Delta\bx_k) - (\nabla c_k^i)^T\bd_k| \leq L_G(\|\Delta\bx_k\|\|\bd_k\| + \|\bd_k\|^2).
\end{equation*} 
Therefore, $\| c_{s_k} -c_k-G_k\Delta\bx_k \|\leq \sqrt{m} L_G(\|\Delta\bx_k\|\|\bd_k\| + \|\bd_k\|^2)$. For $\|\bd_k\|$, we have
\begin{equation}\label{eq:length_of_correctional_step}
\|\bd_k\| \leq \|G_k^T[G_kG_k^T]^{-1}\|\|c(\bx_k+\Delta\bx_k)-c_k-G_k\Delta\bx_k\| \leq \frac{L_G}{\sqrt{\kappa_{1,G}}}\Delta_k^2.
\end{equation}
Combining the last two results and using the fact that $\Delta_k\leq \Delta_{\max}$, we have
\begin{equation}\label{eq:cubic_const}
\|c_{s_k}-c_k-G_k\Delta\bx_k\| 
\leq
m L_G\|\Delta\bx_k\|\|\bd_k\|+m L_G\|\bd_k\|^2
\leq\left(\frac{\sqrt{m}L_G^2}{\sqrt{\kappa_{1,G}}}+\frac{\sqrt{m}  L_G^3\Delta_{\max}}{\kappa_{1,G}}\right)\Delta_k^3.
\end{equation}
Next, we analyze the first term in \eqref{eq:lemma_diff_w_soc_1}. 
For some points $\phi_1$ between $[\bx_k+\Delta\bx_k,\bx_k+\Delta\bx_k+\bd_k]$~and~$\phi_2$ between $[\bx_k,\bx_k+\Delta\bx_k]$, we have
\begin{align*}
f_{s_k} & = f(\bx_k+\Delta\bx_k+\bd_k) = f(\bx_k+\Delta\bx_k)+ \nabla f(\bx_k+\Delta\bx_k)^T\bd_k+\frac{1}{2}\bd_k^T\nabla^2 f(\phi_1)\bd_k \\
& = f_k+g_k^T\Delta\bx_k+\nabla f(\bx_k+\Delta\bx_k)^T\bd_k+\frac{1}{2}\Delta\bx_k^T\nabla^2 f(\phi_2)\Delta\bx_k +\frac{1}{2}\bd_k^T\nabla^2 f(\phi_1)\bd_k.
\end{align*}
Define $\tilde{\blambda}_k=-[G_kG_k^T]^{-1}G_k\nabla f(\bx_k+\Delta\bx_k)$. By the Taylor expansion, we have
\begin{align*} 
\nabla f(\bx_k+\Delta\bx_k)^T\bd_k & \stackrel{\mathclap{\eqref{def:correctional_step}}}{=}\; \tilde{\blambda}_k^T[c(\bx_k+\Delta\bx_k)-c_k-G_k\Delta\bx_k] \\
& =\sum_{i=1}^m\tilde{\blambda}_k^i[c^i(\bx_k+\Delta\bx_k)-c_k^i-(\nabla c_k^i)^T\Delta\bx_k] = \frac{1}{2}\sum_{i=1}^m\tilde{\blambda}_k^i\Delta\bx_k^T\nabla^2c^i(\phi_3^i)\Delta\bx_k,
\end{align*}
where the points $\{\phi_3^i\}_{i=1}^m$ are between $[\bx_k,\bx_k+\Delta\bx_k]$. Recall that $\barH_k=\bar{\nabla}^2f_k+\sum_{i=1}^{m}\barblambda_k^i\nabla^2c_k^i$. We combine the above two displays and have
\begin{align}
& \big| f_{s_k} - f_k  -\barg_k^T\Delta\bx_k-\frac{1}{2}\Delta\bx_k^T\barH_k\Delta\bx_k\big|\\
& = \bigg|(g_k-\barg_k)^T\Delta\bx_k+\frac{1}{2}\Delta\bx_k^T\left(\nabla^2 f(\phi_2)-\nabla^2f_k\right)\Delta\bx_k+\frac{1}{2}\Delta\bx_k^T\left(\nabla^2 f_k-\bar{\nabla}^2f_k\right)\Delta\bx_k  \\
& \quad\quad +\frac{1}{2}\sum_{i=1}^{m}\left(\blambda_k^i-\barblambda_k^i\right)\Delta\bx_k^T\nabla^2c_k^i\Delta\bx_k+\frac{1}{2}\sum_{i=1}^m\left(\tilde{\blambda}_k^i-\blambda_k^i\right)\Delta\bx_k^T\nabla^2c_k^i\Delta\bx_k \\
& \quad\quad +\frac{1}{2}\bd_k^T\nabla^2 f\left(\phi_1\right)\bd_k+\frac{1}{2}\sum_{i=1}^m\tilde{\blambda}_k^i\Delta\bx_k^T\left(\nabla^2c^i(\phi_3^i)-\nabla^2c_k^i\right)\Delta\bx_k\bigg|\\
& \leq \|g_k-\barg_k\|\|\Delta\bx_k\|+\frac{L_{\nabla^2 f}}{2}\|\Delta\bx_k\|^3+\frac{1}{2}\|\nabla^2 f_k-\bar{\nabla}^2f_k\|\|\Delta\bx_k\|^2\\
& \quad\quad + \frac{\sqrt{m}L_G}{2}(\|\blambda_k - \barblambda_k\| + \|\tilde{\blambda}_k-\blambda_k\|) \|\Delta\bx_k\|^2 + \frac{L_{\nabla f}}{2}\|\bd_k\|^2 + \frac{\sqrt{m}L_{\nabla^2 c}}{2}\|\tilde{\blambda}_k\| \|\Delta\bx_k\|^3\\
& \leq \rbr{\kappa_g+\frac{L_{\nabla^2 f}+\kappa_h}{2}}\Delta_k^3 + \frac{\sqrt{m}L_G}{2}(\|\blambda_k - \barblambda_k\| + \|\tilde{\blambda}_k-\blambda_k\|) \Delta_k^2 + \frac{L_{\nabla f}}{2}\|\bd_k\|^2 + \frac{\sqrt{m}L_{\nabla^2 c}\|\tilde{\blambda}_k\|}{2} \Delta_k^3\\
& \stackrel{\mathclap{\eqref{eq:length_of_correctional_step}}}{\leq} \rbr{\kappa_g+\frac{L_{\nabla^2 f}+\kappa_h}{2} + \frac{L_{\nabla f}L_G^2\Delta_{\max}}{2\kappa_{1,G}} + \frac{\sqrt{m}L_{\nabla^2 c}\|\tilde{\blambda}_k\|}{2} }\Delta_k^3 + \frac{\sqrt{m}L_G}{2}(\|\blambda_k - \barblambda_k\| + \|\tilde{\blambda}_k-\blambda_k\|) \Delta_k^2,
\end{align}
where the second inequality is by Assumption \ref{ass:1-1} and the third inequality is by the event $\A_k\cap\B_k$~and the fact that $\Delta\bx_k\leq \Delta_k$ (note that the SOC step is performed only when $\alpha=1$). 
Furthermore,~on~the event $\B_k$, it follows from  Assumption \ref{ass:1-1} that 
\begin{align*}
\|\blambda_k-\barblambda_k\| & \leq \|[G_kG_k^T]^{-1}G_k\|\|g_k-\barg_k\| \leq\frac{\kappa_g}{\sqrt{\kappa_{1,G}}}\Delta_k^2 \leq\frac{\kappa_g\Delta_{\max}}{\sqrt{\kappa_{1,G}}}\Delta_k,\\
\|\blambda_k-\tilde{\blambda}_k\| & \leq \|[G_kG_k^T]^{-1}G_k\|\|g_k-\nabla f(\bx_k+\Delta\bx_k)\| \leq \frac{L_{\nabla f}}{\sqrt{\kappa_{1,G}}}\Delta_k,\\
\|\tilde{\blambda}_k\| & \leq \|[G_kG_k^T]^{-1}G_k\| \|\nabla f(\bx_k+\Delta\bx_k\| \leq 
\frac{1}{\sqrt{\kappa_{1,G}}} (L_{\nabla f}\|\Delta\bx_k\| +\|g_k\|)\leq \frac{L_{\nabla f}\Delta_{\max}+\kappa_{\nabla f}}{\sqrt{\kappa_{1,G}}}.
\end{align*}
Combining the above results, we have
\begin{multline}\label{eq:cubic_quad}
\big|f_{s_k}-f_k  -\barg_k^T\Delta\bx_k-\frac{1}{2}\Delta\bx_k^T\barH_k\Delta\bx_k\big|  \leq \left(\kappa_g+\frac{L_{\nabla^2 f}+\kappa_h}{2}+\frac{L_{\nabla f}L_G^2\Delta_{\max}}{2\kappa_{1,G}}\right)\Delta_k^3 \\
+ \rbr{\frac{\sqrt{m}L_{\nabla^2 c}(L_{\nabla f}\Delta_{\max} + \kappa_{\nabla f}) }{2\sqrt{\kappa_{1,G}}} + \frac{\sqrt{m}L_G(\kappa_g\Delta_{\max} + L_{\nabla f})}{2\sqrt{\kappa_{1,G}}}}\Delta_k^3.
\end{multline}
We complete the proof by combining \eqref{eq:lemma_diff_w_soc_1}, \eqref{eq:cubic_const} and \eqref{eq:cubic_quad}.
\end{proof}

The next lemma demonstrates that when the current iterate $\bx_k$ is not a first-order stationary~point (i.e., $\|\nabla\L_k\|>0$), the estimates of objective models are accurate, and the trust-region~radius~is~sufficiently small, then the $k$-th iteration is guaranteed to be successful without performing the~SOC~step. Furthermore, the reduction in the merit function is of the order $\cO(\|\nabla\L_k\|\Delta_k)$.

\begin{lemma}\label{lemma:guarantee_succ_step_KKT}

Under Assumptions \ref{ass:1-1}, \ref{ass:4-1}, and the event $\A_k\cap\B_k\cap\C_k$, for $k\geq\barK$, if
\begin{equation}\label{delta:lemma_guarantee_succ_step_KKT}
\|\nabla\L_k\|\geq \max\left\{ \kappa_B,\frac{4\kappa_f\max\{1,\Delta_{\max}\}+8\Upsilon_1}{\kappa_{fcd}(1-\eta)}\right\}\Delta_k+\kappa_g\max\{1,\Delta_{\max}\}\Delta_k
\end{equation}
with $\Upsilon_1$ defined in Lemma \ref{lemma:diff_ared_pred_wo_corr_step}, then the $k$-th iteration is successful without computing the SOC step. Furthermore, 
\begin{equation*}
\L_{\barmu_{\barK}}^{k+1}-\L_{\barmu_{\barK}}^k \leq -\Upsilon_3\|\nabla\L_k\|\Delta_k,
\end{equation*}
where 
\begin{equation*}
\Upsilon_3=\frac{3\kappa_{fcd}}{8}\cdot \max\left\{\frac{\kappa_B}{\kappa_g\max\{1,\Delta_{\max}\}+\kappa_B},\frac{4\kappa_f\max\{1,\Delta_{\max}\}+8\Upsilon_1}{\{(1-\eta)\kappa_{fcd}\kappa_g+4\kappa_f\}\max\{1,\Delta_{\max}\}+8\Upsilon_1}\right\}.
\end{equation*}

\end{lemma}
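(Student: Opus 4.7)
The plan is to verify three things in sequence: (i) Line~6 of Algorithm~\ref{Alg:STORM} is not triggered, so a trial step is computed; (ii) the first check $\text{Ared}_k/\text{Pred}_k \geq \eta$ in Line~12 succeeds, so the iteration is declared successful in Case~1 of Step~4 without the SOC branch (Case~2) ever being entered; and (iii) the true merit function decreases by at least $\Upsilon_3\|\nabla\L_k\|\Delta_k$.

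For (i), the hypothesis \eqref{delta:lemma_guarantee_succ_step_KKT} implies $\|\nabla\L_k\|\geq(\eta\kappa_B+\kappa_g\max\{1,\Delta_{\max}\})\Delta_k$, which matches the first alternative in \eqref{eq:lemma3:delta}; Lemma~\ref{lemma:line6_not_hold} then guarantees that a trial step $\Delta\bx_k$ (gradient or eigen) is computed. Moreover, on $\B_k$ we have $\|\bar{\nabla}\L_k\|\geq\|\nabla\L_k\|-\kappa_g\max\{1,\Delta_{\max}\}\Delta_k\geq\kappa_B\Delta_k\geq\|\barH_k\|\Delta_k$, so the minimum in \eqref{eq:threshold_Predk} equals $\Delta_k$, yielding the clean lower bound $|\text{Pred}_k|\geq(\kappa_{fcd}/2)\|\bar{\nabla}\L_k\|\Delta_k$ (the outer $\max$ in \eqref{eq:threshold_Predk} can only help, so this bound holds whichever of the two step types is taken).

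For (ii), since the SOC step has not yet been activated at the first check, $\bx_{s_k}=\bx_k+\Delta\bx_k$, and I would apply the triangle inequality together with the bound $|\text{Ared}_k-(\L_{\barmu_{\barK}}^{s_k}-\L_{\barmu_{\barK}}^k)|\leq 2\kappa_f\max\{1,\Delta_{\max}\}\Delta_k^2$ from $\C_k$ (absorbing $\alpha\in\{0,1\}$ via $\Delta_k^{\alpha+2}\leq\max\{1,\Delta_{\max}\}\Delta_k^2$) and $|\L_{\barmu_{\barK}}^{s_k}-\L_{\barmu_{\barK}}^k-\text{Pred}_k|\leq\Upsilon_1\Delta_k^2$ from Lemma~\ref{lemma:diff_ared_pred_wo_corr_step}, obtaining $|\text{Ared}_k-\text{Pred}_k|\leq(2\kappa_f\max\{1,\Delta_{\max}\}+\Upsilon_1)\Delta_k^2$. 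Combining this with the lower bound on $|\text{Pred}_k|$ from (i) and the second alternative inside the $\max$ of \eqref{delta:lemma_guarantee_succ_step_KKT}---which, after using $\|\bar{\nabla}\L_k\|\geq\|\nabla\L_k\|-\kappa_g\max\{1,\Delta_{\max}\}\Delta_k$, forces $(1-\eta)|\text{Pred}_k|$ to dominate the $\Delta_k^2$ residual---I deduce $\text{Ared}_k-\text{Pred}_k\leq(1-\eta)|\text{Pred}_k|$, equivalent to $\text{Ared}_k/\text{Pred}_k\geq\eta$ since $\text{Pred}_k<0$. Hence the algorithm enters Case~1 directly and the SOC step is never computed.

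For (iii), on this successful iteration $\bx_{k+1}=\bx_{s_k}$, so
\[
\L_{\barmu_{\barK}}^{k+1}-\L_{\barmu_{\barK}}^k = \text{Ared}_k - \bigl[\text{Ared}_k-(\L_{\barmu_{\barK}}^{k+1}-\L_{\barmu_{\barK}}^k)\bigr] \leq \eta\,\text{Pred}_k + 2\kappa_f\max\{1,\Delta_{\max}\}\Delta_k^2.
\]
Substituting the upper bound on $\text{Pred}_k$ from (i), replacing $\|\bar{\nabla}\L_k\|$ by its lower bound in terms of $\|\nabla\L_k\|$, and absorbing the remaining $\Delta_k^2$ residual into a fixed fraction of $\|\nabla\L_k\|\Delta_k$ via \eqref{delta:lemma_guarantee_succ_step_KKT} yields the claimed decrease with constant $\Upsilon_3$. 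The main obstacle is the tight bookkeeping behind $\Upsilon_3$: its two-term $\max$ encodes the two distinct sources of slack---the gap between $\|\bar{\nabla}\L_k\|$ and $\|\nabla\L_k\|$ (governed by $\kappa_g$ and $\kappa_B$) on one hand, and the second-order errors coming from $\C_k$ and the Taylor remainder in Lemma~\ref{lemma:diff_ared_pred_wo_corr_step} (governed by $\kappa_f$ and $\Upsilon_1$) on the other---and balancing both against the hypothesized scale of $\|\nabla\L_k\|$ is what produces the stated constant.
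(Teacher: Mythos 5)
Parts (i) and (ii) of your plan coincide with the paper's proof: \eqref{delta:lemma_guarantee_succ_step_KKT} implies \eqref{eq:lemma3:delta} so Lemma \ref{lemma:line6_not_hold} applies; on $\B_k$ the estimated residual satisfies \eqref{eq:guarantee_succ_step_estKKT}, which forces the minimum in \eqref{eq:threshold_Predk} to equal $\Delta_k$ and gives $\text{Pred}_k\leq-\tfrac{\kappa_{fcd}}{2}\|\bar{\nabla}\L_k\|\Delta_k$; and the ratio test then follows from exactly the error decomposition you describe (the paper's \eqref{eq:Ared/Pred-1} combined with $\C_k$ and Lemma \ref{lemma:diff_ared_pred_wo_corr_step}). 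That much is correct.

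Part (iii) has a genuine gap: you pass through $\text{Ared}_k\leq\eta\,\text{Pred}_k$, which inserts a factor of $\eta$ in front of the leading term and makes the stated constant $\Upsilon_3$ unreachable. Concretely, your chain gives
\[
\L_{\barmu_{\barK}}^{k+1}-\L_{\barmu_{\barK}}^k \leq -\frac{\eta\kappa_{fcd}}{2}\|\bar{\nabla}\L_k\|\Delta_k+2\kappa_f\max\{1,\Delta_{\max}\}\Delta_k^2.
\]
If you absorb the $\Delta_k^2$ term using only \eqref{eq:guarantee_succ_step_estKKT}, the residual is as large as $\tfrac{\kappa_{fcd}(1-\eta)}{2}\|\bar{\nabla}\L_k\|\Delta_k$, and the coefficient becomes $\tfrac{\kappa_{fcd}}{2}(2\eta-1)$, which is nonnegative for $\eta\leq 1/2$ — no decrease at all. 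Even invoking the standing bound $\kappa_f\leq\tfrac{\kappa_{fcd}\eta^3}{16\max\{1,\Delta_{\max}\}}$ only yields a coefficient of $\tfrac{3\kappa_{fcd}\eta}{8}$, i.e.\ $\eta$ times the leading factor $\tfrac{3\kappa_{fcd}}{8}$ appearing in $\Upsilon_3$. Since $\Upsilon_3$ carries no $\eta$ and is used downstream (in \eqref{eq:nu_and_1-nu} and Lemma \ref{lemma:ABCC'_hold}), the lemma as stated is not established by this route. The paper avoids the loss by never touching $\text{Ared}_k$ in this step: it bounds the \emph{true} merit reduction directly via Lemma \ref{lemma:diff_ared_pred_wo_corr_step}, namely $\L_{\barmu_{\barK}}^{k+1}-\L_{\barmu_{\barK}}^k\leq\text{Pred}_k+\Upsilon_1\Delta_k^2\leq-\tfrac{3\kappa_{fcd}}{8}\|\bar{\nabla}\L_k\|\Delta_k$ (the $\Upsilon_1\Delta_k^2$ term is absorbed using $8\Upsilon_1\Delta_k\leq\kappa_{fcd}\|\bar{\nabla}\L_k\|$ from \eqref{eq:guarantee_succ_step_estKKT}), and then converts $\|\bar{\nabla}\L_k\|$ to $\|\nabla\L_k\|$ via \eqref{eq:lemma6_KKT}, which is where the two-term $\max$ in $\Upsilon_3$ originates. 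You already have Lemma \ref{lemma:diff_ared_pred_wo_corr_step} in hand from part (ii); you should apply it here instead of the success ratio.
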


\begin{proof}

To prove the $k$-th iteration is successful, it suffices to show that \eqref{def:acc1} holds and $\text{Ared}_k/\text{Pred}_k\geq\eta$. Since \eqref{delta:lemma_guarantee_succ_step_KKT} implies \eqref{eq:lemma3:delta}, Lemma \ref{lemma:line6_not_hold} indicates that \eqref{def:acc1} holds. Now, we show $\text{Ared}_k/\text{Pred}_k\geq\eta$ holds without performing the SOC step (thus $\bx_{s_k}=\bx_k+\Delta\bx_k$). 
On the event $\B_k$, we have~$\|\bar{\nabla}\L_k\|\geq\|\nabla\L_k\|-\kappa_g\max\{1,\Delta_{\max}\}\Delta_k$ and \eqref{delta:lemma_guarantee_succ_step_KKT} implies
\begin{equation}\label{eq:guarantee_succ_step_estKKT}
\|\bar{\nabla}\L_k\| \geq \max\left\{ \kappa_B,\frac{4\kappa_f\max\{1,\Delta_{\max}\}+8\Upsilon_1}{\kappa_{fcd}(1-\eta)}\right\}\Delta_k.
\end{equation}
Define a local model of $\mL_{\barmu_k}^k$ along the direction $\bs\in\mR^d$ as $m_{\barmu_k}^k(\bs)=f_k+\barg_k^T\bs+\frac{1}{2}\bs^T\barH_k\bs+\barmu_k\|c_k+G_k\bs\|$, using the definitions of $\text{Ared}_k$ in \eqref{def:Ared_k} and $\text{Pred}_k$ in \eqref{def:Pred_k}, we have
\begin{align*}
\frac{\text{Ared}_k}{\text{Pred}_k} 
& =\frac{\bar{\L}_{\barmu_{\barK}}^{s_k}-\bar{\L}_{\barmu_{\barK}}^k}{\text{Pred}_k}\\
& = \frac{\bar{\L}_{\barmu_{\barK}}^{s_k}-\L_{\barmu_{\barK}}^{s_k}+\L_{\barmu_{\barK}}^{s_k}-m_{\barmu_{\barK}}^k(\Delta\bx_k)+m_{\barmu_{\barK}}^k(\Delta\bx_k)-m_{\barmu_{\barK}}^k(\b0)+m_{\barmu_{\barK}}^k(\b0)-\L_{\barmu_{\barK}}^k+\L_{\barmu_{\barK}}^k-\bar{\L}_{\barmu_{\barK}}^{k}}{\text{Pred}_k}\notag\\
& = \frac{\bar{\L}_{\barmu_{\barK}}^{s_k}-\L_{\barmu_{\barK}}^{s_k}+\L_{\barmu_{\barK}}^{s_k}-m_{\barmu_{\barK}}^k(\Delta\bx_k)+\L_{\barmu_{\barK}}^k-\bar{\L}_{\barmu_{\barK}}^{k}}{\text{Pred}_k}+1,
\end{align*}
where we have used $\text{Pred}_k= m_{\barmu_{\barK}}^k(\Delta\bx_k)-m_{\barmu_{\barK}}^k(\b0)$ and $m_{\barmu_{\barK}}^k(\b0)-\L_{\barmu_{\barK}}^k=0$. Therefore,
\begin{equation}\label{eq:Ared/Pred-1}
\left|\frac{\text{Ared}_k}{\text{Pred}_k}-1\right|\leq\frac{\big|\bar{\L}_{\barmu_{\barK}}^{s_k}-\L_{\barmu_{\barK}}^{s_k}\big|+\big|\L_{\barmu_{\barK}}^{s_k}-m_{\barmu_{\barK}}^k(\Delta\bx_k)\big|+\big|\L_{\barmu_{\barK}}^k-\bar{\L}_{\barmu_{\barK}}^{k}\big|}{\left|\text{Pred}_k\right|}.
\end{equation}
By the algorithm design and $\max\{1,\|\barH_k\|\}\leq\kappa_B$, we have 
\begin{equation}\label{eq:lemma6_pred}
\text{Pred}_k 
\stackrel{\eqref{eq:threshold_Predk}}{\leq}
-\frac{\kappa_{fcd}}{2}\|\bar{\nabla}\L_k\|\min\left\{\Delta_k,\frac{\|\bar{\nabla}\L_k\|}{\|\barH_k\|}\right\}
\stackrel{\eqref{eq:guarantee_succ_step_estKKT}}{=}
-\frac{\kappa_{fcd}}{2}\|\bar{\nabla}\L_k\|\Delta_k.
\end{equation}
Since $|\L_{\barmu_\barK}^{s_k}-\barL_{\barmu_\barK}^{s_k}|=|f_{s_k}-\barf_{s_k}|$ and $|\L_{\barmu_\barK}^k-\barL_{\barmu_\barK}^k|=|f_k-\barf_k|$, on the event $\C_k$, we have 
\begin{equation*}
\big|\L_{\barmu_\barK}^{s_k}-\barL_{\barmu_\barK}^{s_k}\big|+\big|\L_{\barmu_\barK}^k-\barL_{\barmu_\barK}^k\big|\leq 2\kappa_f\max\{1,\Delta_{\max}\}\Delta_k^2.
\end{equation*} 
Since $m_{\barmu_{\barK}}^k(\Delta\bx_k)=\L_{\barmu_\barK}^k+\text{Pred}_k$, Lemma~\ref{lemma:diff_ared_pred_wo_corr_step} gives
\begin{equation}\label{eq:lemma6_ared_pred}
\left|\L_{\barmu_{\barK}}^{s_k}-m_{\barmu_{\barK}}^k(\Delta\bx_k)\right|=\left|\L_{\barmu_{\barK}}^{s_k}-\L_{\barmu_\barK}^k-\text{Pred}_k\right| \leq \Upsilon_1\Delta_k^2.
\end{equation}
Combining the last four displays, we have
\begin{equation*}
\left|\frac{\text{Ared}_k}{\text{Pred}_k}-1\right|\leq\frac{(4\kappa_f\max\{1,\Delta_{\max}\}+2\Upsilon_1)\Delta_k}{\kappa_{fcd}\|\bar{\nabla}\L_k\|} \stackrel{\eqref{eq:guarantee_succ_step_estKKT}}{\leq} 1-\eta,
\end{equation*}
equivalently, $\text{Ared}_k/\text{Pred}_k\geq \eta$. Since the $k$-th iteration at Line 12 of Algorithm \ref{Alg:STORM} is already~successful, the SOC step will not be computed. Next, we analyze the reduction in the merit function. Combining \eqref{eq:lemma6_pred} and \eqref{eq:lemma6_ared_pred}, and noting that $\bx_{k+1}=\bx_{s_k}$, we have
\begin{equation}\label{eq:lemma6_reduction_1}
\L_{\barmu_{\barK}}^{k+1}-\L_{\barmu_{\barK}}^k \leq \text{Pred}_k+\Upsilon_1\Delta_k^2 \leq -\frac{\kappa_{fcd}}{2}\|\bar{\nabla}\L_k\|\Delta_k+\Upsilon_1\Delta_k^2 \stackrel{\eqref{eq:guarantee_succ_step_estKKT}}{\leq} -\frac{3\kappa_{fcd}}{8}\|\bar{\nabla}\L_k\|\Delta_k.
\end{equation}
Since $\|\bar{\nabla}\L_k\|\geq \|\nabla\L_k\| - \kappa_g\max\{1,\Delta_{\max}\}\Delta_k$ and \eqref{delta:lemma_guarantee_succ_step_KKT} implies that
\begin{multline}\label{eq:lemma6_KKT}
\kappa_g\max\{1,\Delta_{\max}\}\Delta_k \\
\leq  \min\left\{ \frac{\kappa_g\max\{1,\Delta_{\max}\}}{\kappa_g\max\{1,\Delta_{\max}\}+\kappa_B},\frac{(1-\eta)\kappa_{fcd}\kappa_g\max\{1,\Delta_{\max}\}}{\{(1-\eta)\kappa_{fcd}\kappa_g+4\kappa_f\}\max\{1,\Delta_{\max}\}+8\Upsilon_1}\right\}\|\nabla\L_k\|,
\end{multline}
we complete the proof by combining \eqref{eq:lemma6_reduction_1} and \eqref{eq:lemma6_KKT}.
\end{proof}

Next, we consider $\alpha=1$ and prove that when the current iterate $\bx_k$ is not a second-order stationary point (i.e., $\tau_k^+>0$), the estimates of objective models are accurate, and the \mbox{trust-region}~radius~is sufficiently small, then the $k$-th iteration is guaranteed to be successful.
Here, we use $\C_k'$ to denote the event that the accurate estimates of objective values are regenerated when computing the~SOC~step. If the SOC step is not computed, we simply assume $\C_k'$ holds for consistency.

\begin{lemma}\label{lemma:guarantee_succ_step_eigen}

Under Assumptions \ref{ass:1-1}, \ref{ass:4-1}, and the event $\A_k\cap\B_k\cap\C_k\cap\C_k'$ with $\alpha=1$, for $k\geq\barK$, if
\begin{equation}\label{delta:lemma_guarantee_succ_step_eigen}
\tau_k^+ \geq \max\left\{ \eta, \frac{4\kappa_f\max\{1,\Delta_{\max}\}+2\Upsilon_1+2\Upsilon_2}{(1-\eta)\kappa_{fcd}\min\{1,r\}}\right\}\Delta_k+\kappa_H\Delta_k,
\end{equation}
with $\Upsilon_1$ defined in Lemma \ref{lemma:diff_ared_pred_wo_corr_step} and $\Upsilon_2$ defined in Lemma \ref{lemma:diff_ared_pred_w_corr_step}, then the $k$-th iteration is successful.
\end{lemma}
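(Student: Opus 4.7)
The structure will mirror Lemma \ref{lemma:guarantee_succ_step_KKT} but with curvature replacing the KKT residual in the lower bound on $|\text{Pred}_k|$, and with a case split that reflects whether the SOC step is eligible. First, I would use Lemma \ref{lemma:tau_accurate} on the event $\A_k \cap \B_k$ to turn the hypothesis into the two estimated inequalities $\bartau_k^+ \geq \eta \Delta_k$ and $\bartau_k^+ \geq \frac{4\kappa_f\max\{1,\Delta_{\max}\}+2\Upsilon_1+2\Upsilon_2}{(1-\eta)\kappa_{fcd}\min\{1,r\}}\Delta_k$. The first of these shows that \eqref{def:acc1} is satisfied, so Line 6 is not triggered and a trial step is computed. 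Moreover, \eqref{eq:threshold_Predk} immediately gives the key lower bound $|\text{Pred}_k| \geq \frac{\kappa_{fcd}}{2}\bartau_k^+ \Delta_k(\Delta_k + \|c_k\|)$, which will drive the rest of the argument regardless of whether the gradient or the eigen step was selected.

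Next, I would split into two cases according to the SOC eligibility criterion. \textbf{Case A ($\|c_k\| > r$).} Here SOC cannot fire, so I must show the first ratio test already passes. The observation $\Delta_k + \|c_k\| \geq \min\{1,r\}$ yields $|\text{Pred}_k| \geq \frac{\kappa_{fcd}}{2} \bartau_k^+ \Delta_k \min\{1,r\}$. Combining the event $\C_k$ (bounding $|\barf_k - f_k| + |\barf_{s_k} - f_{s_k}|$ by $2\kappa_f \Delta_k^3 \leq 2\kappa_f \max\{1,\Delta_{\max}\}\Delta_k^2$) with Lemma \ref{lemma:diff_ared_pred_wo_corr_step} gives $|\text{Ared}_k - \text{Pred}_k| \leq (2\kappa_f\max\{1,\Delta_{\max}\} + \Upsilon_1)\Delta_k^2$, exactly as in the proof of Lemma \ref{lemma:guarantee_succ_step_KKT}. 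Plugging into \eqref{eq:Ared/Pred-1}-style manipulations and using the stronger lower bound on $\bartau_k^+$ shows $|\text{Ared}_k/\text{Pred}_k - 1| \leq 1-\eta$, which is the success criterion.

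\textbf{Case B ($\|c_k\| \leq r$).} If the first check already succeeds, we are done; otherwise the algorithm computes the SOC step and re-evaluates the ratio. For the re-evaluation, the event $\C_k'$ guarantees accuracy of the refreshed $\barf_{s_k}$, and Lemma \ref{lemma:diff_ared_pred_w_corr_step} gives the sharper cubic bound $|\text{Ared}_k - \text{Pred}_k| \leq (2\kappa_f + \Upsilon_2)\Delta_k^3$. Using the weaker but sufficient bound $|\text{Pred}_k| \geq \frac{\kappa_{fcd}}{2}\bartau_k^+ \Delta_k^2$ (from $\Delta_k + \|c_k\| \geq \Delta_k$) and dividing, the success condition reduces to $\bartau_k^+ \geq \frac{4\kappa_f + 2\Upsilon_2}{(1-\eta)\kappa_{fcd}}\Delta_k$, which is implied by our estimated lower bound since $\max\{1,\Delta_{\max}\} \geq 1$ and $\min\{1,r\} \leq 1$.

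The main obstacle I anticipate is organizing the constants so a single assumption on $\tau_k^+$ simultaneously covers both the quadratic error regime (no SOC, Case A) and the cubic error regime (SOC fired, Case B); the role of the combined constant $4\kappa_f\max\{1,\Delta_{\max}\}+2\Upsilon_1+2\Upsilon_2$ and the factor $\min\{1,r\}$ in the denominator is precisely to make the two subcase requirements collapse into one. A minor bookkeeping point is that the case split is over $\|c_k\|$, which is deterministic given $\bx_k$, so the events $\C_k$ and $\C_k'$ enter only multiplicatively and do not complicate the conditioning.
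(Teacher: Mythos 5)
Your proposal is correct and follows essentially the same route as the paper's proof: convert the hypothesis to a bound on $\bartau_k^+$ via Lemma \ref{lemma:tau_accurate}, rule out Line 6, use the $\bartau_k^+\Delta_k(\Delta_k+\|c_k\|)$ term of \eqref{eq:threshold_Predk} to lower-bound $|\text{Pred}_k|$, and split on $\|c_k\|$ versus $r$, invoking Lemma \ref{lemma:diff_ared_pred_wo_corr_step} in the no-SOC case and Lemma \ref{lemma:diff_ared_pred_w_corr_step} with $\C_k'$ in the SOC case. The only cosmetic difference is that you bound $\Delta_k+\|c_k\|$ below by $\min\{1,r\}$ where the paper uses $r$ directly, which changes nothing since the hypothesis already carries $\min\{1,r\}$ in the denominator.
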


\begin{proof}

We prove that Line 6 in Algorithm \ref{Alg:STORM} is not triggered and $\text{Ared}_k/\text{Pred}_k\geq\eta$. Since \eqref{delta:lemma_guarantee_succ_step_eigen}~implies \eqref{eq:lemma3:delta}, Lemma \ref{lemma:line6_not_hold} indicates that Line 6 will not be triggered. We only need to show $\text{Ared}_k/\text{Pred}_k\geq\eta$.
On the event $\A_k\cap\B_k$, we have $\bartau_k^+\geq \tau_k^+-\kappa_H\Delta_k$ (cf. Lemma \ref{lemma:tau_accurate}). Thus, \eqref{delta:lemma_guarantee_succ_step_eigen} leads to
\begin{equation}\label{eq:guarantee_succ_step_esteigen}
\bartau_k^+ \geq \max\left\{ \eta, \frac{4\kappa_f\max\{1,\Delta_{\max}\}+2\Upsilon_1+2\Upsilon_2}{(1-\eta)\kappa_{fcd}\min\{1,r\}}\right\}\Delta_k.
\end{equation}
We first consider the case when $\|c_k\|>r$. By \eqref{eq:threshold_Predk}, we have for both the gradient and eigen steps,
\begin{equation}\label{eq:lemma7_pred_1}
\text{Pred}_k\leq - \frac{\kappa_{fcd}}{2}\bartau_k^+\|c_k\|\Delta_k \leq - \frac{r\kappa_{fcd}}{2}\bartau_k^+\Delta_k.
\end{equation}
Since $\alpha=1$, we apply the event $\C_k$ and have $|\L_{\barmu_\barK}^{s_k}-\barL_{\barmu_\barK}^{s_k}|  + |\L_{\barmu_\barK}^k-\barL_{\barmu_\barK}^k|\leq 2\kappa_f\Delta_k^3 \leq 2\kappa_f\Delta_{\max}\Delta_k^2$. 
When the SOC step is not performed, Lemma \ref{lemma:diff_ared_pred_wo_corr_step} implies \eqref{eq:lemma6_ared_pred} holds. 
Combined with \eqref{eq:Ared/Pred-1}, we~have \;\;
\begin{equation*}
\left|\frac{\text{Ared}_k}{\text{Pred}_k}-1\right| \leq \frac{(4\kappa_f\Delta_{\max}+2\Upsilon_1)\Delta_k}{r\kappa_{fcd}\bartau_k^+} \stackrel{\eqref{eq:guarantee_succ_step_esteigen}}{\leq} 1-\eta,
\end{equation*}
equivalently, $\text{Ared}_k/\text{Pred}_k\geq \eta$. Next, we consider $\|c_k\|\leq r$. If $\text{Ared}_k/\text{Pred}_k\geq \eta$ holds when~the~SOC step is not performed, there is nothing to prove. Otherwise, the condition $\|c_k\|\leq r$ will trigger the SOC step and $\bx_{s_k}=\bx_k+\Delta\bx_k+\bd_k$. On the event $\C_k\cap\C_k'$, we have $ |\L_{\barmu_\barK}^{s_k}-\barL_{\barmu_\barK}^{s_k}|+|\L_{\barmu_\barK}^k-\barL_{\barmu_\barK}^k|\leq 2\kappa_f\Delta_k^3$. Meanwhile, \eqref{eq:threshold_Predk} implies $\text{Pred}_k\leq-({\kappa_{fcd}}/{2})\bartau_k^+\Delta_k^2$.
Combining with Lemma \ref{lemma:diff_ared_pred_w_corr_step} and \eqref{eq:Ared/Pred-1}, we have 
\begin{equation*}
\left|\frac{\text{Ared}_k}{\text{Pred}_k}-1\right| \leq \frac{(4\kappa_f+2\Upsilon_2)\Delta_k}{\kappa_{fcd}\bartau_k^+} \stackrel{\eqref{eq:guarantee_succ_step_esteigen}}{\leq} 1-\eta,
\end{equation*}
which completes the proof.
\end{proof}

In the following lemma, we demonstrate that for both first and second-order stationarity, if the estimates of objective values are accurate and the $k$-th iteration is successful, then the reduction~in~the merit function is proportional to $\Delta_k^3$.

\begin{lemma}\label{lemma8:reduction_of_successful_iter}

Under Assumptions \ref{ass:1-1}, \ref{ass:4-1}, and the event $\C_k\cap\C_k'$, for $k\geq \barK$, if the $k$-th iteration is successful, then
\begin{equation*}
\L_{\barmu_{\barK}}^{k+1}-\L_{\barmu_{\barK}}^k\leq-\frac{3\kappa_{fcd}}{8\max\{1,\Delta_{\max}\}}\eta^3\Delta_k^3.
\end{equation*}
\end{lemma}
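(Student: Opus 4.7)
The plan is to combine three ingredients: the sufficient-reduction threshold on $\text{Pred}_k$ enforced by the merit-parameter update rule~\eqref{eq:threshold_Predk}; the success criterion $\text{Ared}_k/\text{Pred}_k \geq \eta$ (which, together with $\text{Pred}_k<0$, yields $\text{Ared}_k \leq \eta\,\text{Pred}_k$); and the estimation-accuracy bound $|\text{Ared}_k - (\L_{\barmu_{\barK}}^{s_k} - \L_{\barmu_{\barK}}^k)| \leq |f_{s_k}-\barf_{s_k}| + |\barf_k - f_k| \leq 2\kappa_f \Delta_k^{\alpha+2}$ available on the event $\C_k\cap\C_k'$. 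Since the iteration is successful, Line~6 of Algorithm~\ref{Alg:STORM} did not trigger, so the stationarity-gap criterion~\eqref{def:acc1} must hold, and therefore at least one of (a) $\|\bar{\nabla}\L_k\|/\max\{1,\|\barH_k\|\} \geq \eta\Delta_k$ or (b) $\bartau_k^+ \geq \eta\Delta_k$ is in force (case~(b) is possible only for $\alpha=1$).

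To lower-bound $|\text{Pred}_k|$, I would split on these two subcases. In case~(a), $\|\bar{\nabla}\L_k\| \geq \eta\Delta_k\max\{1,\|\barH_k\|\}$ forces $\|\bar{\nabla}\L_k\|/\|\barH_k\| \geq \eta\Delta_k$ (by considering $\|\barH_k\|\geq 1$ and $\|\barH_k\|<1$ separately), so $\min\{\Delta_k,\|\bar{\nabla}\L_k\|/\|\barH_k\|\}\geq\eta\Delta_k$, and the first entry of the max in~\eqref{eq:threshold_Predk} is at least $\eta^2\Delta_k^2$, giving $\text{Pred}_k \leq -\kappa_{fcd}\eta^2\Delta_k^2/2$. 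In case~(b), discarding $\|c_k\|\geq 0$ yields $\bartau_k^+\Delta_k(\Delta_k+\|c_k\|)\geq\eta\Delta_k^3$, so $\text{Pred}_k \leq -\kappa_{fcd}\eta\Delta_k^3/2$. Multiplying through by $\eta$ via the success criterion then gives $\text{Ared}_k \leq -\kappa_{fcd}\eta^3\Delta_k^2/2$ in case~(a) and $\text{Ared}_k \leq -\kappa_{fcd}\eta^2\Delta_k^3/2$ in case~(b).

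The final step is to absorb the estimation error into these $\text{Ared}_k$ bounds. The a~priori restriction $\kappa_f \leq \kappa_{fcd}\eta^3/(16\max\{1,\Delta_{\max}\})$ converts the error into $2\kappa_f\Delta_k^{\alpha+2} \leq \kappa_{fcd}\eta^3\Delta_k^{\alpha+2}/(8M)$, writing $M:=\max\{1,\Delta_{\max}\}$. The target bound $\L_{\barmu_{\barK}}^{k+1}-\L_{\barmu_{\barK}}^k \leq -3\kappa_{fcd}\eta^3\Delta_k^3/(8M)$ then reduces in each case to the two elementary facts $\Delta_k\leq M$ (from $\Delta_k\leq\Delta_{\max}$) and $\eta\leq 1$. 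The main bookkeeping subtlety — and in my view the only real difficulty — is that case~(a) naturally produces a $\Delta_k^2$-order leading term rather than a $\Delta_k^3$-order one; the passage to the stated bound uses $\Delta_k^2 \geq \Delta_k^3/M$, which is precisely what produces the factor $M$ in the denominator of the claim. Notice that for $\alpha=0$ only case~(a) is active (since $\bartau_k^+=0$ by construction), and the matching $\Delta_k^2$-order error is absorbed by the $\Delta_k^2$-order leading term; for $\alpha=1$ the error is already $\Delta_k^3$-order and absorbs directly. No deeper idea is needed — the argument is essentially a unified $\Delta_k^2$-versus-$\Delta_k^3$ accounting.
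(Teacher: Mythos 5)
Your proposal is correct and follows essentially the same route as the paper's proof: both combine the threshold \eqref{eq:threshold_Predk} with the success ratio $\text{Ared}_k \le \eta\,\text{Pred}_k$ and the $2\kappa_f\Delta_k^{\alpha+2}$ estimation error on $\C_k\cap\C_k'$, then use the a priori bound on $\kappa_f$ and $\Delta_k^2 \ge \Delta_k^3/\max\{1,\Delta_{\max}\}$ to close the accounting. The only cosmetic difference is that you case-split on which arm of \eqref{def:acc1} is active, whereas the paper splits on $\alpha$ and, for $\alpha=1$, folds both arms into the single bound $\text{Pred}_k \le -\tfrac{\kappa_{fcd}}{2\max\{1,\Delta_{\max}\}}\eta^2\Delta_k^3$; the arithmetic is identical.
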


\begin{proof}

If $\alpha=0$, a successful iteration implies $\text{Ared}_k/\text{Pred}_k\geq\eta$, $\|\bar{\nabla}\L_k\|/\max\{1,\|\barH_k\|\}\geq \eta\Delta_k$,~and
\begin{equation} \label{eq:pred_1st_order}
\text{Pred}_k \leq -\frac{\kappa_{fcd}}{2}\|\bar{\nabla}\L_k\|\min\left\{ \Delta_k,\frac{\|\bar{\nabla}\L_k\|}{\|\barH_k\|}\right\} \leq -\frac{\kappa_{fcd}}{2} \eta^2\Delta_k^2.
\end{equation}
On the event $\C_k$, $\left| f_{s_k}-\barf_{s_k} \right| + \left| f_k-\barf_k \right| \leq 2\kappa_f\Delta_k^2$. Thus,
\begin{align}\label{eq:lemma8_1}
\L_{\barmu_{\barK}}^{k+1}-\L_{\barmu_{\barK}}^k 
& \leq \big| \L_{\barmu_{\barK}}^{k+1}-\bar{\L}_{\barmu_{\barK}}^{k+1} \big| + \text{Ared}_k + \big| \bar{\L}_{\barmu_{\barK}}^k-\L_{\barmu_{\barK}}^k \big| \nonumber \\
& \leq \left| f_{s_k}-\barf_{s_k} \right| + \eta \cdot \text{Pred}_k + \left| f_k-\barf_k \right|
\leq 2\kappa_f\Delta_k^2 -\frac{\kappa_{fcd}}{2}\eta^3\Delta_k^2 \leq -\frac{3\kappa_{fcd}}{8}\eta^3\Delta_k^2,
\end{align} 
where we have used the definition of $\kappa_f$ in the last inequality. If $\alpha=1$, a successful iteration implies $\max\{\frac{\|\bar{\nabla}\L_k\|}{\max\{1,\|\barH_k\|\}},\bartau_k^+\}\geq \eta\Delta_k$ and \eqref{eq:threshold_Predk} implies
\begin{equation}\label{eq:pred_2nd_order}
\text{Pred}_k\leq -\frac{\kappa_{fcd}}{2\max\{1,\Delta_{\max}\}}\eta^2\Delta_k^3.
\end{equation}
On the event $\C_k\cap\C_k'$, we have $\left| f_{s_k}-\barf_{s_k} \right| + \left| f_k-\barf_k \right| \leq 2\kappa_f\Delta_k^3 $ and
\begin{equation}\label{nequ:4}
\L_{\barmu_{\barK}}^{k+1}-\L_{\barmu_{\barK}}^k \leq 2\kappa_f\Delta_k^3 -\frac{\kappa_{fcd}}{2\max\{1,\Delta_{\max}\}}\eta^3\Delta_k^3 \leq -\frac{3\kappa_{fcd}}{8\max\{1,\Delta_{\max}\}}\eta^3\Delta_k^3,
\end{equation}
where the last inequality is by the definition of $\kappa_f$. Combining \eqref{eq:lemma8_1} and \eqref{nequ:4} completes the proof.
\end{proof}

In the following few lemmas, we investigate the global convergence of Algorithm \ref{Alg:STORM} by \mbox{leveraging}~the reduction in a potential function given by
\begin{equation*}
\Phi_{\barmu_{\barK}}^k=\nu\L_{\barmu_{\barK}}^k+\frac{1-\nu}{2}\Delta_k^3+\frac{1-\nu}{2}\barepsilon_k,
\end{equation*}
where $\nu\in(0,1)$ is a constant satisfying ($\Upsilon_3$ is defined in Lemma \ref{lemma:guarantee_succ_step_KKT})
\begin{equation}\label{eq:nu_and_1-nu}
\frac{\nu}{1-\nu}\geq \max\left\{\frac{4\gamma^3\max\{1,\Delta_{\max}\}}{\min\{\eta^3\kappa_{fcd},\Upsilon_3\}},\frac{2\gamma}{\eta}\right\}.
\end{equation}
We first consider the case when the estimates of objective models~are accurate.

\begin{lemma}\label{lemma:ABCC'_hold}

Under Assumptions \ref{ass:1-1}, \ref{ass:4-1}, and the event $\A_k\cap\B_k\cap\C_k\cap\C_k'$, for $k\geq\barK$, we have
\begin{equation}\label{eq:lemma_phi_reduction_all_acc}
\Phi_{\barmu_{\barK}}^{k+1}-\Phi_{\barmu_{\barK}}^k \leq \frac{1-\nu}{2}\left(\frac{1}{\gamma^3}-1\right)\Delta_k^3+\frac{1-\nu}{2}\left(\frac{1}{\gamma}-1\right)\barepsilon_k.
\end{equation}

\end{lemma}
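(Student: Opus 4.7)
The plan is a direct case analysis on the outcome of the $k$-th iteration in Algorithm \ref{Alg:STORM}. Three mutually exclusive scenarios are possible: (i) the iteration is \emph{unsuccessful}, so $\bx_{k+1}=\bx_k$, $\Delta_{k+1}=\Delta_k/\gamma$, and $\barepsilon_{k+1}=\barepsilon_k/\gamma$; (ii) the iteration is \emph{successful and unreliable}, so $\bx_{k+1}=\bx_{s_k}$, $\Delta_{k+1}\leq\gamma\Delta_k$, and $\barepsilon_{k+1}=\barepsilon_k/\gamma$; (iii) the iteration is \emph{successful and reliable}, so $\bx_{k+1}=\bx_{s_k}$, $\Delta_{k+1}\leq\gamma\Delta_k$, and $\barepsilon_{k+1}=\gamma\barepsilon_k$. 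In each scenario I would split $\Phi_{\barmu_\barK}^{k+1}-\Phi_{\barmu_\barK}^k$ into the three pieces $\nu(\L_{\barmu_\barK}^{k+1}-\L_{\barmu_\barK}^k)$, $\tfrac{1-\nu}{2}(\Delta_{k+1}^3-\Delta_k^3)$, and $\tfrac{1-\nu}{2}(\barepsilon_{k+1}-\barepsilon_k)$ and check \eqref{eq:lemma_phi_reduction_all_acc} directly.

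Scenario (i) is immediate: since $\L_{\barmu_\barK}^{k+1}=\L_{\barmu_\barK}^k$, direct substitution gives \eqref{eq:lemma_phi_reduction_all_acc} with equality. In scenario (ii), the $\barepsilon_k$ pieces of the two sides coincide, so all I need is that the reduction $\L_{\barmu_\barK}^{k+1}-\L_{\barmu_\barK}^k\leq-\tfrac{3\kappa_{fcd}\eta^3}{8\max\{1,\Delta_{\max}\}}\Delta_k^3$ supplied by Lemma \ref{lemma8:reduction_of_successful_iter} dominates the growth $\Delta_{k+1}^3-\Delta_k^3\leq(\gamma^3-1)\Delta_k^3$. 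Matching $\Delta_k^3$ coefficients reduces this to $\tfrac{\nu}{1-\nu}\geq\tfrac{4\max\{1,\Delta_{\max}\}(\gamma^3-\gamma^{-3})}{3\kappa_{fcd}\eta^3}$, which is strictly weaker than the first bound in \eqref{eq:nu_and_1-nu} (since $\gamma^3-\gamma^{-3}<\gamma^3$ and $\min\{\eta^3\kappa_{fcd},\Upsilon_3\}\leq\eta^3\kappa_{fcd}$).

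Scenario (iii) is the main obstacle, because $\barepsilon_{k+1}=\gamma\barepsilon_k$ is now \emph{larger} than $\barepsilon_k$, so a bound on $\L_{\barmu_\barK}^{k+1}-\L_{\barmu_\barK}^k$ purely in $\Delta_k^3$ cannot pay for this increase. The key is to combine Lemma \ref{lemma8:reduction_of_successful_iter} with the reliability criterion $-\text{Pred}_k\geq\barepsilon_k$: since the iteration is successful, $\text{Ared}_k\leq\eta\,\text{Pred}_k\leq-\eta\,\barepsilon_k$, and the accuracy event $\C_k\cap\C_k'$ provides $|\bar\L_{\barmu_\barK}^{j}-\L_{\barmu_\barK}^{j}|\leq\kappa_f\Delta_k^{\alpha+2}$ at $j\in\{k,s_k\}$, giving
\[
\L_{\barmu_\barK}^{k+1}-\L_{\barmu_\barK}^k\leq-\eta\,\barepsilon_k+2\kappa_f\Delta_k^{\alpha+2}.
\]
Averaging this with the Lemma \ref{lemma8:reduction_of_successful_iter} bound (for $\alpha=0$ I would use the sharper $\Delta_k^2$ form exposed in that lemma's proof and then convert via $\Delta_k^2\geq\Delta_k^3/\max\{1,\Delta_{\max}\}$) and absorbing the leftover $\kappa_f\Delta_k^{\alpha+2}$ error using the standing assumption $\kappa_f\leq\tfrac{\kappa_{fcd}\eta^3}{16\max\{1,\Delta_{\max}\}}$ yields the refined estimate
\[
\L_{\barmu_\barK}^{k+1}-\L_{\barmu_\barK}^k\leq-\tfrac{\kappa_{fcd}\eta^3}{8\max\{1,\Delta_{\max}\}}\Delta_k^3-\tfrac{\eta}{2}\barepsilon_k.
\]
Substituting into $\Phi_{\barmu_\barK}^{k+1}-\Phi_{\barmu_\barK}^k$ and matching $\Delta_k^3$ and $\barepsilon_k$ coefficients separately reduces \eqref{eq:lemma_phi_reduction_all_acc} to the two scalar inequalities $\tfrac{\nu}{1-\nu}\geq\tfrac{4\max\{1,\Delta_{\max}\}(\gamma^3-\gamma^{-3})}{\kappa_{fcd}\eta^3}$ and $\tfrac{\nu}{1-\nu}\geq\tfrac{\gamma-\gamma^{-1}}{\eta}$, both of which follow from the two branches of \eqref{eq:nu_and_1-nu}. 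The hardest technical point is precisely this reliable-case balancing: it requires the reliability condition, the upper bound on $\kappa_f$ built into the algorithm's parameters, and the explicit definition of $\nu$ to fit together simultaneously.
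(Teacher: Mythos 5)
Your proof is correct, and it takes a genuinely leaner route than the paper's. The paper performs a two-level case split: first on whether $\|\nabla\L_k\|$ is large relative to $\Delta_k$ (condition \eqref{delta:lemma_guarantee_succ_step_KKT}, which via Lemma \ref{lemma:guarantee_succ_step_KKT} forces a successful iteration and yields a reduction of order $\|\nabla\L_k\|\Delta_k$), and only then on the reliable/unreliable/unsuccessful outcome; you split solely on the outcome and, in the successful cases, use only the uniform $\Delta_k^3$ reduction of Lemma \ref{lemma8:reduction_of_successful_iter}. Your handling of the reliable case — combining $\text{Ared}_k\leq\eta\,\text{Pred}_k\leq-\eta\barepsilon_k$ with the accuracy of $\barf_k,\barf_{s_k}$ and the standing bound $\kappa_f\leq\tfrac{\kappa_{fcd}\eta^3}{16\max\{1,\Delta_{\max}\}}$ to extract both a $-\Delta_k^3$ term and a $-\tfrac{\eta}{2}\barepsilon_k$ term — is exactly the mechanism of the paper's Case 2a, and your coefficient checks against \eqref{eq:nu_and_1-nu} are right (the constants you need are strictly weaker than the two branches of \eqref{eq:nu_and_1-nu}). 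What your shortcut gives up is not correctness of the lemma but a byproduct: the paper's Case 1 produces the intermediate estimate \eqref{eq:lemma9_case1_final}, a decrease of $\Phi_{\barmu_{\barK}}^k$ proportional to $\|\nabla\L_k\|\Delta_k$ whenever the KKT residual dominates $\Delta_k$, and this is reused verbatim in Lemma \ref{lemma: finite_sum} to prove $\sum_{k\in\K_\epsilon}\Delta_k<\infty$ and hence the limit-type convergence of Theorem \ref{thm:limit_result}. So your argument is a clean, self-contained proof of the stated inequality, but if one adopts it, the $\|\nabla\L_k\|\Delta_k$-dependent bound would have to be re-derived separately where the paper later needs it.
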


\begin{proof}
We separate the analysis into two cases based on the condition \eqref{delta:lemma_guarantee_succ_step_KKT}.

\vskip3pt
\noindent \textbf{Case 1: \eqref{delta:lemma_guarantee_succ_step_KKT} holds.}
In this case, Lemma \ref{lemma:guarantee_succ_step_KKT} suggests that the $k$-th iteration is successful without computing the SOC step.~We further separate a successful step into a reliable and unreliable step.

\vskip3pt
\noindent $\bullet$ \textbf{Case 1a: reliable iteration.} We have
\begin{multline*}
\L_{\barmu_{\barK}}^{k+1} - \L_{\barmu_{\barK}}^k \stackrel{\text{Lemma }\ref{lemma:diff_ared_pred_wo_corr_step}}{\leq} \text{Pred}_k+\Upsilon_1\Delta_k^2 \stackrel{(\ref{nequ:3}\text{b})}{\leq} \frac{1}{2}\text{Pred}_k -\frac{1}{2}\barepsilon_k + \Upsilon_1\Delta_k^2 \\
\stackrel{\eqref{eq:lemma6_pred} }{\leq} -\frac{\kappa_{fcd}}{4}\|\bar{\nabla}\L_k\|\Delta_k -\frac{1}{2}\barepsilon_k + \Upsilon_1\Delta_k^2 
\stackrel{\eqref{eq:guarantee_succ_step_estKKT}}{\leq} -\frac{\kappa_{fcd}}{8}\|\bar{\nabla}\L_k\|\Delta_k -\frac{1}{2}\barepsilon_k  \stackrel{\eqref{eq:lemma6_KKT}}{\leq} -\frac{1}{3}\Upsilon_3\|\nabla\L_k\|\Delta_k -\frac{1}{2}\barepsilon_k.
\end{multline*}
For a reliable iteration, $\Delta_{k+1}\leq\gamma\Delta_k$ and $\barepsilon_{k+1}=\gamma\barepsilon_k$. Since \eqref{eq:nu_and_1-nu} implies $\frac{1-\nu}{2}(\gamma-1)\barepsilon_k\leq \frac{1}{4}\nu\barepsilon_k$,~we~have
\begin{align}\label{eq:lemma9_case1_reliable}
\Phi_{\barmu_{\barK}}^{k+1}-\Phi_{\barmu_{\barK}}^k & \leq -\frac{1}{3}\nu\Upsilon_3\|\nabla\L_k\|\Delta_k -\frac{1}{2}\nu\barepsilon_k+\frac{1-\nu}{2}(\gamma^3-1)\Delta_k^3+\frac{1-\nu}{2}(\gamma-1)\barepsilon_k \notag\\
& \leq -\frac{1}{3}\nu\Upsilon_3\|\nabla\L_k\|\Delta_k -\frac{1}{4}\nu\barepsilon_k+\frac{1-\nu}{2}(\gamma^3-1)\Delta_k^3.
\end{align}
\noindent $\bullet$ \textbf{Case 1b: unreliable iteration.} Combining Lemma~\ref{lemma:guarantee_succ_step_KKT}, $\Delta_{k+1}\leq\gamma\Delta_k$, and $\barepsilon_{k+1}=\barepsilon_k/\gamma$, we have 
\begin{equation}\label{eq:lemma9_case1_unreliable}
\Phi_{\barmu_{\barK}}^{k+1}-\Phi_{\barmu_{\barK}}^k \leq -\nu\Upsilon_3\|\nabla\L_k\|\Delta_k + \frac{1-\nu}{2}(\gamma^3-1)\Delta_k^3 + \frac{1-\nu}{2}\left(\frac{1}{\gamma} - 1\right)\barepsilon_k.
\end{equation}
Combining both \textbf{Case 1a} and \textbf{Case 1b} in \eqref{eq:lemma9_case1_reliable} and \eqref{eq:lemma9_case1_unreliable}, and noting that $\frac{1}{4}\nu\barepsilon_k\geq\frac{1-\nu}{2}\big(1-\frac{1}{\gamma} \big)\barepsilon_k$ as implied by \eqref{eq:nu_and_1-nu}, we have
\begin{equation}\label{eq:lemma9_case1_1}
\Phi_{\barmu_{\barK}}^{k+1}-\Phi_{\barmu_{\barK}}^k  \leq -\frac{1}{3}\nu\Upsilon_3\|\nabla\L_k\|\Delta_k + \frac{1-\nu}{2}(\gamma^3-1)\Delta_k^3 + \frac{1-\nu}{2}\left(\frac{1}{\gamma} - 1\right)\barepsilon_k.
\end{equation}
Since $\kappa_B\geq 1$, \eqref{delta:lemma_guarantee_succ_step_KKT} implies $\|\nabla\L_k\| \geq \Delta_k$. Thus, we know
\begin{equation}\label{eq:lemma9_case1_2}
-\frac{1}{6}\nu\Upsilon_3\|\nabla\L_k\|\Delta_k+ \frac{1-\nu}{2}(\gamma^3-1)\Delta_k^3 \leq -\frac{\nu\Upsilon_3}{6\Delta_{\max}}\Delta_k^3+ \frac{1-\nu}{2}(\gamma^3-1)\Delta_k^3 \stackrel{\eqref{eq:nu_and_1-nu}}{\leq} 0.
\end{equation}
Combining \eqref{eq:lemma9_case1_1} and \eqref{eq:lemma9_case1_2}, we know for \textbf{Case 1} that
\begin{equation}\label{eq:lemma9_case1_final}
\Phi_{\barmu_{\barK}}^{k+1}-\Phi_{\barmu_{\barK}}^k  \leq -\frac{1}{6}\nu\Upsilon_3\|\nabla\L_k\|\Delta_k + \frac{1-\nu}{2}\left(\frac{1}{\gamma} - 1\right)\barepsilon_k.
\end{equation}
\noindent \textbf{Case 2: \eqref{delta:lemma_guarantee_succ_step_KKT} does not hold.} In this case, the $k$-th iteration can be successful (reliable or unreliable) or unsuccessful.

\vskip3pt
\noindent$\bullet$ \textbf{Case 2a: reliable iteration.} We have
\begin{equation*}
\L_{\barmu_{\barK}}^{k+1} - \L_{\barmu_{\barK}}^k 
\stackrel{(\ref{nequ:3}\text{b}),\eqref{eq:lemma8_1}}{\leq} \left|f_{s_k} - \barf_{s_k}\right| + \left|f_k - \barf_k\right| + \frac{1}{2}\eta  \text{Pred}_k -\frac{1}{2} \eta \barepsilon_k.
\end{equation*}
When $\alpha=0$ (i.e., first-order stationarity), we apply the event $\C_k$ and the definition of $\kappa_f$, and obtain
\begin{equation}\label{eq:lemma_ABCC'_case2_obj_value_1}
\left|f_{s_k} - \barf_{s_k}\right| + \left|f_k - \barf_k\right| \leq 2\kappa_f\Delta_k^2 \leq \frac{\kappa_{fcd}}{8\max\{1,\Delta_{\max}\}}\eta^3\Delta_k^2.
\end{equation}
Since the iteration is successful, \eqref{eq:pred_1st_order} holds. Using $\Delta_k\leq\Delta_{\max}$,
 \begin{equation*}
\L_{\barmu_{\barK}}^{k+1} - \L_{\barmu_{\barK}}^k 
\leq \frac{\kappa_{fcd}}{8\max\{1,\Delta_{\max}\}}\eta^3\Delta_k^2 - \frac{\kappa_{fcd}}{4}\eta^3 \Delta_k^2 -\frac{1}{2} \eta \barepsilon_k \leq - \frac{\kappa_{fcd}}{8\max\{1,\Delta_{\max}\}}\eta^3 \Delta_k^3 -\frac{1}{2} \eta \barepsilon_k.
\end{equation*}
When $\alpha=1$ (i.e., second-order stationarity), we apply the event $\C_k\cap\C_k'$ and have
\begin{equation}\label{eq:lemma_ABCC'_case2_obj_value_2}
\left|f_{s_k} - \barf_{s_k}\right| + \left|f_k - \barf_k\right| \leq 2\kappa_f\Delta_k^3 \leq \frac{\kappa_{fcd}}{8\max\{1,\Delta_{\max}\}}\eta^3\Delta_k^3,
\end{equation}
which together with \eqref{eq:pred_2nd_order} yields
 \begin{equation*}
\L_{\barmu_{\barK}}^{k+1} - \L_{\barmu_{\barK}}^k \leq - \frac{\kappa_{fcd}}{8\max\{1,\Delta_{\max}\}}\eta^3 \Delta_k^3 -\frac{1}{2} \eta \barepsilon_k.
\end{equation*}
Thus, $\alpha=0$ and $\alpha=1$ share the same bound for $\L_{\barmu_{\barK}}^{k+1} - \L_{\barmu_{\barK}}^k$.~For a reliable iteration,~\mbox{$\Delta_{k+1}\leq\gamma\Delta_k$}~and $\barepsilon_{k+1}=\gamma\barepsilon_k$. Since \eqref{eq:nu_and_1-nu} implies $\frac{1-\nu}{2}(\gamma-1)\barepsilon_k\leq \frac{1}{4}\nu\eta\barepsilon_k$, we have
\begin{align*}
\Phi_{\barmu_{\barK}}^{k+1}-\Phi_{\barmu_{\barK}}^k & \leq - \frac{\nu \kappa_{fcd}}{8\max\{1,\Delta_{\max}\}}\eta^3 \Delta_k^3 -\frac{1}{2} \nu \eta \barepsilon_k +\frac{1-\nu}{2}(\gamma^3-1)\Delta_k^3 + \frac{1-\nu}{2}(\gamma-1)\barepsilon_k \\
& \leq - \frac{\nu \kappa_{fcd}}{8\max\{1,\Delta_{\max}\}}\eta^3 \Delta_k^3 - \frac{1}{4}\nu\eta \barepsilon_k + \frac{1-\nu}{2}(\gamma^3-1)\Delta_k^3.
\end{align*}
\noindent$\bullet$ \textbf{Case 2b: unreliable iteration.}~Combining Lemma \ref{lemma8:reduction_of_successful_iter}, $\Delta_{k+1}\leq\gamma\Delta_k$, and $\barepsilon_{k+1}=\barepsilon_k/\gamma$,~we~have 
\begin{equation*}
\Phi_{\barmu_{\barK}}^{k+1}-\Phi_{\barmu_{\barK}}^k \leq - \frac{3\nu \kappa_{fcd}}{8\max\{1,\Delta_{\max}\}}\eta^3 \Delta_k^3 +\frac{1-\nu}{2}(\gamma^3-1)\Delta_k^3 + \frac{1-\nu}{2}\left(\frac{1}{\gamma}-1\right)\barepsilon_k.
\end{equation*}
\noindent$\bullet$ \textbf{Case 2c: unsuccessful iteration.} Here $\bx_{k+1}=\bx_k$, $\Delta_{k+1}=\Delta_k/\gamma$, and $\barepsilon_{k+1}=\barepsilon_k/\gamma$. Thus,
\begin{equation}\label{eq:Phi_unsuccessful}
\Phi_{\barmu_{\barK}}^{k+1}-\Phi_{\barmu_{\barK}}^k \leq \frac{1-\nu}{2}\left(\frac{1}{\gamma^3}-1\right)\Delta_k^3 + \frac{1-\nu}{2}\left(\frac{1}{\gamma}-1\right)\barepsilon_k.
\end{equation}
Since \eqref{eq:nu_and_1-nu} implies
\begin{equation*}
- \frac{\nu \kappa_{fcd}}{8\max\{1,\Delta_{\max}\}}\eta^3 \Delta_k^3 +\frac{1-\nu}{2}(\gamma^3-1)\Delta_k^3 \leq \frac{1-\nu}{2}\left(\frac{1}{\gamma^3}-1\right)\Delta_k^3
\end{equation*}
and 
\begin{equation}\label{eq:lemma_ABCC'_epsilon}
\frac{1}{4}\nu\eta\barepsilon_k \geq \frac{1-\nu}{2}\left(1-\frac{1}{\gamma}\right)\barepsilon_k,
\end{equation}
we combine \textbf{Cases 2a, 2b, 2c} together and know that the result \eqref{eq:Phi_unsuccessful} for \textbf{Case 2c} also~holds~for~\textbf{Cases 2a} and \textbf{2b} as well. Note that in \textbf{Case 1}, $\Delta_k\leq\|\nabla\L_k\|$ and \eqref{eq:nu_and_1-nu} together imply that
\begin{equation}\label{last_eq}
-\frac{1}{6}\nu\Upsilon_3\|\nabla\L_k\|\Delta_k \leq \frac{1-\nu}{2}\left(\frac{1}{\gamma^3}-1\right)\Delta_k^3.
\end{equation}
The proof is complete by combining \eqref{eq:lemma9_case1_final} for \textbf{Case 1} and \eqref{eq:Phi_unsuccessful} for \textbf{Case 2}.
\end{proof}

We now examine the reduction in $\Phi_{\barmu_{\barK}}^k$ when not all estimates are accurate.

\begin{lemma}\label{lemma:Phi_(ABCC')^c}

Under Assumptions \ref{ass:1-1}, \ref{ass:4-1}, and the event $(\A_k\cap\B_k\cap\C_k\cap\C_k')^c$, for $k\geq \barK$, we have
\begin{equation}\label{eq:lemma_phi_reduction_not_all_acc}
\Phi_{\barmu_{\barK}}^{k+1}-\Phi_{\barmu_{\barK}}^k \leq \nu\cbr{\left|f_{s_k}-\barf_{s_k} \right| + \left|f_k-\barf_k \right|} + \frac{1-\nu}{2}\left(\frac{1}{\gamma^3}-1\right)\Delta_k^3 + \frac{1-\nu}{2}\left(\frac{1}{\gamma}-1\right)\barepsilon_k,
\end{equation}
where $\bx_{s_k}=\bx_k+\Delta\bx_k$ if the SOC step is not performed and $\bx_{s_k}=\bx_k+\Delta\bx_k+\bd_k$ if the SOC step is performed.
\end{lemma}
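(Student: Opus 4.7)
The plan is to enumerate the three possible outcomes of the $k$-th iteration according to the algorithm's own branching---\emph{unsuccessful}, \emph{successful and reliable}, or \emph{successful and unreliable}---and to bound $\Phi_{\barmu_{\barK}}^{k+1}-\Phi_{\barmu_{\barK}}^k$ in each case using only inequalities that do \emph{not} require accuracy of the estimates. In contrast with Lemma~\ref{lemma:ABCC'_hold}, where Lemmas~\ref{lemma:diff_ared_pred_wo_corr_step}--\ref{lemma:diff_ared_pred_w_corr_step} delivered a sharp comparison between $\L^{s_k}_{\barmu_{\barK}}-\L^k_{\barmu_{\barK}}$ and $\text{Pred}_k$, here the errors $|f_{s_k}-\barf_{s_k}|$ and $|f_k-\barf_k|$ may be arbitrarily large and must be carried through as the extra $\nu\{|f_{s_k}-\barf_{s_k}|+|f_k-\barf_k|\}$ term on the right-hand side of~\eqref{eq:lemma_phi_reduction_not_all_acc}.

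For the unsuccessful case---whether triggered by the failure of~\eqref{def:acc1} at Line~6 or by \textbf{Step 4 (Case 3)}---the algorithm sets $\bx_{k+1}=\bx_k$, $\Delta_{k+1}=\Delta_k/\gamma$, and $\barepsilon_{k+1}=\barepsilon_k/\gamma$, so $\L_{\barmu_{\barK}}^{k+1}=\L_{\barmu_{\barK}}^k$ and $\Phi_{\barmu_{\barK}}^{k+1}-\Phi_{\barmu_{\barK}}^k$ equals exactly $\tfrac{1-\nu}{2}(\tfrac{1}{\gamma^3}-1)\Delta_k^3+\tfrac{1-\nu}{2}(\tfrac{1}{\gamma}-1)\barepsilon_k$. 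Since the estimation-error term on the right of~\eqref{eq:lemma_phi_reduction_not_all_acc} is nonnegative, the target inequality holds trivially.

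For a successful iteration (regardless of whether the SOC step was computed), $\bx_{k+1}=\bx_{s_k}$ and $\Delta_{k+1}\leq\gamma\Delta_k$. Without invoking any accuracy event, the only usable bound on the merit reduction is the triangle inequality
\[
\L^{k+1}_{\barmu_{\barK}}-\L^k_{\barmu_{\barK}}\leq|f_{s_k}-\barf_{s_k}|+\text{Ared}_k+|f_k-\barf_k|\leq|f_{s_k}-\barf_{s_k}|+|f_k-\barf_k|+\eta\,\text{Pred}_k,
\]
where the last step uses $\text{Ared}_k\leq\eta\,\text{Pred}_k$ (from~(\ref{nequ:3}a) together with $\text{Pred}_k<0$). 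A successful iteration also forces~\eqref{def:acc1} to hold, and then~\eqref{eq:threshold_Predk} yields a uniform bound $\text{Pred}_k\leq-\tfrac{\kappa_{fcd}\eta^2}{2\max\{1,\Delta_{\max}\}}\Delta_k^3$ in both cases: for $\alpha=0$ by converting the $\Delta_k^2$ bound~\eqref{eq:pred_1st_order} via $\Delta_k\leq\Delta_{\max}$, and for $\alpha=1$ directly from~\eqref{eq:pred_2nd_order}. For the reliable sub-case I additionally use $-\text{Pred}_k\geq\barepsilon_k$ from~(\ref{nequ:3}b) to split $\eta\,\text{Pred}_k\leq\tfrac{1}{2}\eta\,\text{Pred}_k-\tfrac{1}{2}\eta\,\barepsilon_k$; for the unreliable sub-case I apply the $\text{Pred}_k$ bound directly to the full $\eta\,\text{Pred}_k$ term and benefit from $\barepsilon_{k+1}=\barepsilon_k/\gamma$ for free.

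The main obstacle is the constant bookkeeping at the end: in each successful sub-case I must verify that the two inequalities in~\eqref{eq:nu_and_1-nu} absorb the positive growth factors simultaneously---namely that $\frac{\nu}{1-\nu}\geq\frac{4\gamma^3\max\{1,\Delta_{\max}\}}{\eta^3\kappa_{fcd}}$ overwhelms the $\tfrac{1-\nu}{2}(\gamma^3-1)\Delta_k^3$ growth coming from $\Delta_{k+1}\leq\gamma\Delta_k$, while $\frac{\nu}{1-\nu}\geq\frac{2\gamma}{\eta}$ overwhelms the $\tfrac{1-\nu}{2}(\gamma-1)\barepsilon_k$ growth appearing only in the reliable case. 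After cancellation, the residual coefficients are bounded above by $\tfrac{1-\nu}{2}(\tfrac{1}{\gamma^3}-1)$ and $\tfrac{1-\nu}{2}(\tfrac{1}{\gamma}-1)$, respectively, yielding~\eqref{eq:lemma_phi_reduction_not_all_acc}.
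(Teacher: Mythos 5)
Your proposal is correct and follows essentially the same route as the paper: the paper's proof likewise splits into reliable, unreliable, and unsuccessful iterations, reuses the Case 2a/2b arguments of Lemma \ref{lemma:ABCC'_hold} while retaining the objective-value error terms $\nu\{|f_{s_k}-\barf_{s_k}|+|f_k-\barf_k|\}$ in place of the $\C_k\cap\C_k'$ bounds, and closes with the same absorption of the $(\gamma^3-1)\Delta_k^3$ and $(\gamma-1)\barepsilon_k$ growth via \eqref{eq:nu_and_1-nu}. The only difference is presentational: you spell out the shared triangle-inequality and $\text{Pred}_k$ bounds explicitly rather than citing the earlier cases.
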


\begin{proof}

We consider the following three cases.

\vskip3pt
\noindent$\bullet$ \textbf{Case 1: reliable iteration.} The proof is similar to \textbf{Case 2a} in Lemma \ref{lemma:ABCC'_hold}. Since $\C_k$ and~$\C_k'$~may not hold, \eqref{eq:lemma_ABCC'_case2_obj_value_1} and \eqref{eq:lemma_ABCC'_case2_obj_value_2} are not guaranteed. Therefore, we have
\begin{equation*}
\Phi_{\barmu_{\barK}}^{k+1}-\Phi_{\barmu_{\barK}}^k  \leq \nu\cbr{\left|f_{s_k}-\barf_{s_k} \right| + \left|f_k-\barf_k \right|} - \frac{\nu \kappa_{fcd}}{4\max\{1,\Delta_{\max}\}}\eta^3 \Delta_k^3 - \frac{1}{4}\nu\eta \barepsilon_k + \frac{1-\nu}{2}(\gamma^3-1)\Delta_k^3.
\end{equation*}
\noindent$\bullet$ \textbf{Case 2: unreliable iteration.} We follow the proof of \textbf{Case 2b} in Lemma \ref{lemma:ABCC'_hold} and Lemma~\ref{lemma8:reduction_of_successful_iter}, and have
\begin{multline*}
\Phi_{\barmu_{\barK}}^{k+1}-\Phi_{\barmu_{\barK}}^k \leq \nu\left|f_{s_k}-\barf_{s_k} \right| + \nu\left|f_k-\barf_k \right|\\
- \frac{\nu \kappa_{fcd}}{2\max\{1,\Delta_{\max}\}}\eta^3 \Delta_k^3 + \frac{1-\nu}{2}(\gamma^3-1)\Delta_k^3 + \frac{1-\nu}{2}\left(\frac{1}{\gamma}-1\right)\barepsilon_k.
\end{multline*}
\noindent$\bullet$ \textbf{Case 3: unsuccessful iteration:} In this case, \eqref{eq:Phi_unsuccessful} holds.

\noindent Combining \textbf{Cases 1, 2,} and \textbf{3}, and noting that \eqref{eq:nu_and_1-nu} implies \eqref{eq:lemma_ABCC'_epsilon} and
\begin{equation}\label{eq:eq:lemma_(ABCC')^c_1}
- \frac{\nu \kappa_{fcd}}{4\max\{1,\Delta_{\max}\}}\eta^3 \Delta_k^3 +\frac{1-\nu}{2}(\gamma^3-1)\Delta_k^3 \leq \frac{1-\nu}{2}\left(\frac{1}{\gamma^3}-1\right)\Delta_k^3,
\end{equation}
we complete the proof.
\end{proof}

Lemma~\ref{lemma:ABCC'_hold} demonstrates that if all estimates are accurate, then a decrease in $\Phi_{\barmu_{\barK}}^k$ is guaranteed, while Lemma~\ref{lemma:Phi_(ABCC')^c} reveals that if some estimates are inaccurate, then $\Phi_{\barmu_{\barK}}^k$ might increase. Next,~we show that as long as the probability of obtaining an accurate objective model exceeds a deterministic threshold, a reduction in $\Phi_{\barmu_{\barK}}^k$ is guaranteed in expectation.

\begin{lemma}\label{lemma:One_Step_Rec}

Under Assumptions \ref{ass:1-1} and \ref{ass:4-1}, for $k\geq \barK$, if
\begin{equation}\label{eq:probability_1}
p_h+p_g+2p_f \leq \frac{(1-\nu)^2}{16\nu^2}\left(1-\frac{1}{\gamma}\right)^2
\end{equation}
then 
\begin{equation}\label{eq:One_Step_Rec}
\mE[\Phi_{\barmu_{\barK}}^{k+1}\mid\F_{k-1}]-\Phi_{\barmu_{\barK}}^k \leq \frac{1-\nu}{2}\left(\frac{1}{\gamma^3}-1\right)\Delta_k^3. 
\end{equation}
\end{lemma}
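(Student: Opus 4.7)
\medskip

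\noindent\textbf{Proof proposal.} The plan is to condition on whether the ``all accurate'' event $\mathcal{E} \coloneqq \A_k \cap \B_k \cap \C_k \cap \C_k'$ occurs and then combine the per-event bounds from Lemmas~\ref{lemma:ABCC'_hold} and \ref{lemma:Phi_(ABCC')^c}. By a union bound together with \eqref{def:Ak}, \eqref{def:Bk}, and \eqref{def:Ck} (the last contributing twice --- once for $\C_k$ and once for the re-estimation event $\C_k'$ when a SOC step is triggered), I would first establish that $P(\mathcal{E}^c \mid \F_{k-1}) \leq p_h + p_g + 2p_f$. Splitting the conditional expectation as
\begin{equation*}
\mE[\Phi_{\barmu_{\barK}}^{k+1}-\Phi_{\barmu_{\barK}}^k \mid \F_{k-1}] = \mE\bigl[(\Phi_{\barmu_{\barK}}^{k+1}-\Phi_{\barmu_{\barK}}^k)\mathbf{1}_{\mathcal{E}} \mid \F_{k-1}\bigr] + \mE\bigl[(\Phi_{\barmu_{\barK}}^{k+1}-\Phi_{\barmu_{\barK}}^k)\mathbf{1}_{\mathcal{E}^c} \mid \F_{k-1}\bigr],
\end{equation*}
and applying \eqref{eq:lemma_phi_reduction_all_acc} on $\mathcal{E}$ and \eqref{eq:lemma_phi_reduction_not_all_acc} on $\mathcal{E}^c$, the deterministic terms $\tfrac{1-\nu}{2}(\gamma^{-3}-1)\Delta_k^3 + \tfrac{1-\nu}{2}(\gamma^{-1}-1)\barepsilon_k$ appear with probability one, leaving behind only the residual stochastic term $\nu\,\mE[(|f_{s_k}-\barf_{s_k}|+|f_k-\barf_k|)\mathbf{1}_{\mathcal{E}^c}\mid \F_{k-1}]$ coming from Lemma~\ref{lemma:Phi_(ABCC')^c}.

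The crux of the argument is to control this residual term and show that it is dominated by the $\barepsilon_k$ slack. I would apply the Cauchy--Schwarz inequality with indicator $\mathbf{1}_{\mathcal{E}^c}$:
\begin{equation*}
\mE\bigl[|f_{s_k}-\barf_{s_k}|\mathbf{1}_{\mathcal{E}^c} \mid \F_{k-1}\bigr] \leq \sqrt{\mE\bigl[|f_{s_k}-\barf_{s_k}|^2 \mid \F_{k-1}\bigr]} \cdot \sqrt{P(\mathcal{E}^c \mid \F_{k-1})},
\end{equation*}
and similarly for $|f_k - \barf_k|$. The reliability condition \eqref{def:reliable_est} is stated conditional on $(\bx_k,\Delta\bx_k)$, so invoking the tower property through the intermediate $\sigma$-algebra $\F_{k-0.5}$ (and using $\F_{k-1}$-measurability of $\barepsilon_k$) gives $\mE[|f_{s_k}-\barf_{s_k}|^2\mid \F_{k-1}]\leq \barepsilon_k^2$, and analogously for the other term. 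This is where I expect the main subtlety: properly handling the measurability of $\Delta\bx_k$ and $\bd_k$ (and hence $\bx_{s_k}$) so that \eqref{def:reliable_est} may legitimately be combined with the probability bound on $\mathcal{E}^c$.

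Putting these pieces together yields
\begin{equation*}
\nu\,\mE\bigl[(|f_{s_k}-\barf_{s_k}|+|f_k-\barf_k|)\mathbf{1}_{\mathcal{E}^c}\mid \F_{k-1}\bigr] \leq 2\nu\barepsilon_k\sqrt{p_h+p_g+2p_f}.
\end{equation*}
Substituting the hypothesis \eqref{eq:probability_1} bounds the right-hand side by $\tfrac{1-\nu}{2}(1-\gamma^{-1})\barepsilon_k$, which is exactly the negation of the $\barepsilon_k$ slack $\tfrac{1-\nu}{2}(\gamma^{-1}-1)\barepsilon_k$ carried from Lemmas~\ref{lemma:ABCC'_hold}--\ref{lemma:Phi_(ABCC')^c}. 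These two terms cancel, leaving only $\tfrac{1-\nu}{2}(\gamma^{-3}-1)\Delta_k^3$, which is \eqref{eq:One_Step_Rec}. The choice of the threshold in \eqref{eq:probability_1} is thus precisely tuned to make the Cauchy--Schwarz overhead no worse than the reliability-parameter decay budget.
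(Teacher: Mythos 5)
Your proposal is correct and follows essentially the same route as the paper: the same union bound on $(\A_k\cap\B_k\cap\C_k\cap\C_k')^c$ via the tower property through $\F_{k-0.5}$, the same split of the conditional expectation combined with Lemmas~\ref{lemma:ABCC'_hold} and \ref{lemma:Phi_(ABCC')^c}, and the same Cauchy--Schwarz (H\"older) plus \eqref{def:reliable_est} argument yielding the residual bound $2\nu\sqrt{p_h+p_g+2p_f}\,\barepsilon_k$, which \eqref{eq:probability_1} absorbs into the $\barepsilon_k$ slack. No gaps.
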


\begin{proof}

By the definitions of $\A_k, \B_k, \C_k$ (and $\C_k'$) in \eqref{def:Ak}, \eqref{def:Bk}, \eqref{def:Ck}, we know that
\begin{align}\label{eq:prob_ABCC'}
P[\A_k\cap\B_k\cap\C_k\cap\C_k'\mid\F_{k-1}] & \geq 1 - \cbr{P(\A_k\mid\F_{k-1})+P(\B_k\mid\F_{k-1})+P(\C_k\mid\F_{k-1})+P(\C_k'\mid\F_{k-1})} \nonumber\\
& \geq 1 - (p_h+p_g+\mE[\mE[\1_{\C_k}+\1_{\C_k'}\mid\F_{k-0.5}] \mid \F_{k-1}] ) \nonumber\\
& \geq 1-(p_h+p_g+2p_f).
\end{align}
Then, we apply Lemmas \ref{lemma:ABCC'_hold}, \ref{lemma:Phi_(ABCC')^c}, and \eqref{eq:prob_ABCC'}, and have
\begin{align*}
& \mE[\Phi_{\barmu_{\barK}}^{k+1}\mid\F_{k-1}] -\Phi_{\barmu_{\barK}}^k  \\
& = \mE[(\Phi_{\barmu_{\barK}}^{k+1}-\Phi_{\barmu_{\barK}}^k )\boldsymbol{1}_{(\A_k\cap\B_k\cap\C_k\cap\C_k')}\mid\F_{k-1}] + \mE[(\Phi_{\barmu_{\barK}}^{k+1}-\Phi_{\barmu_{\barK}}^k)\boldsymbol{1}_{(\A_k\cap\B_k\cap\C_k\cap\C_k')^c} \mid\F_{k-1}]\\
& \leq \frac{1-\nu}{2}\left(\frac{1}{\gamma^3}-1\right)\Delta_k^3+\frac{1-\nu}{2}\left(\frac{1}{\gamma}-1\right)\barepsilon_k + \nu\cdot \mE[(|f_{s_k}-\barf_{s_k}|+|f_k -\barf_k|) \boldsymbol{1}_{(\A_k\cap\B_k\cap\C_k\cap\C_k')^c} \mid\F_{k-1}]\\
& \leq \frac{1-\nu}{2}\left(\frac{1}{\gamma^3}-1\right)\Delta_k^3+\frac{1-\nu}{2}\left(\frac{1}{\gamma}-1\right)\barepsilon_k + 2 \nu\cdot \sqrt{p_h+p_g+2p_f}\cdot \barepsilon_k,
 \end{align*}
where the last inequality uses the Hölder's inequality and the condition \eqref{def:reliable_est}. Since \eqref{eq:probability_1} implies
\begin{equation*}
\frac{1-\nu}{2}\left(\frac{1}{\gamma}-1\right)\barepsilon_k + 2 \nu\cdot \sqrt{p_h+p_g+2p_f}\cdot \barepsilon_k\leq 0,
\end{equation*}
we combine the above two displays and complete the proof.
\end{proof}

The following result follows immediately from Lemma \ref{lemma:One_Step_Rec}.

\begin{corollary}\label{coro:radius_conv_zero}
Under the conditions of Lemma \ref{lemma:One_Step_Rec}, $\lim_{k\rightarrow\infty}\Delta_k=0$ with probability $1$.
\end{corollary}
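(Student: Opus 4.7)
The plan is to view Lemma~\ref{lemma:One_Step_Rec} as stating that, beyond the stabilization time $\barK$, a suitable shift of the potential $\Phi_{\barmu_{\barK}}^k$ is a nonnegative supermartingale whose predictable decrement is proportional to $\Delta_k^3$. The Robbins--Siegmund almost-supermartingale convergence theorem then yields $\sum_k \Delta_k^3 < \infty$ almost surely, which forces $\Delta_k \to 0$.

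First, I would observe that the constant $c \coloneqq \frac{1-\nu}{2}(1-\gamma^{-3})$ is strictly positive since $\gamma > 1$ and $\nu \in (0,1)$. By Assumption~\ref{ass:1-1}, $f(\bx) \geq f_{\inf}$, and the remaining summands $\tfrac{1-\nu}{2}\Delta_k^3$ and $\tfrac{1-\nu}{2}\barepsilon_k$ in the definition of $\Phi_{\barmu_{\barK}}^k$ are nonnegative, so $\Phi_{\barmu_{\barK}}^k \geq \nu f_{\inf}$ almost surely for every $k \geq \barK$. Hence $\Psi_k \coloneqq \Phi_{\barmu_{\barK}}^k - \nu f_{\inf}$ is nonnegative, and Lemma~\ref{lemma:One_Step_Rec} rewrites as $\mE[\Psi_{k+1} \mid \F_{k-1}] \leq \Psi_k - c\,\Delta_k^3$ for every $k \geq \barK$.

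Second, I would apply the Robbins--Siegmund almost-supermartingale convergence theorem to $\{\Psi_k\}_{k \geq \barK}$, which gives simultaneously that (i) $\Psi_k$ converges almost surely to a finite limit, and (ii) $\sum_{k=\barK}^{\infty} \Delta_k^3 < \infty$ almost surely. Conclusion (ii) immediately forces $\Delta_k^3 \to 0$, hence $\Delta_k \to 0$, almost surely.

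The main subtlety will be that $\barK$ is itself a random (albeit almost surely finite) time, so the supermartingale inequality begins from a stochastic index rather than a deterministic one, and $\barmu_{\barK}$ is not $\F_{-1}$-measurable. I would handle this pathwise: for each deterministic $K \in \mN$, condition on the event $\{\barK \leq K\}$, on which $\barmu_k$ is frozen at the deterministic-given-$\F_{K-1}$ value $\barmu_K$ for all $k \geq K$, and apply the Robbins--Siegmund theorem to the shifted process $\{\Psi_{K+j}\}_{j \geq 0}$, which is a genuine nonnegative supermartingale on this event. Since $\{\barK < \infty\}$ has probability one by Assumption~\ref{ass:4-1}, the union $\bigcup_{K \in \mN}\{\barK \leq K\}$ has full probability, and the almost-sure conclusion $\Delta_k \to 0$ transfers to the whole probability space.
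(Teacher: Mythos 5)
Your proposal is correct and follows essentially the same route as the paper: both arguments rest on the fact that $\Phi_{\barmu_{\barK}}^k$ is bounded below by $\nu f_{\inf}$ while its conditional expected decrement is at least a positive multiple of $\Delta_k^3$, which forces $\sum_{k\geq\barK}\Delta_k^3<\infty$ almost surely and hence $\Delta_k\to 0$. The paper carries this out directly by summing the recursion \eqref{eq:One_Step_Rec} and applying Tonelli's theorem, whereas you package the identical telescoping argument as an invocation of Robbins--Siegmund (with all perturbation terms zero); your explicit treatment of the random threshold $\barK$ via the events $\{\barK\leq K\}$ is a slightly more careful version of the paper's conditioning on $\F_{\barK-1}$.
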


\begin{proof}
Taking the expectation conditional on $\F_{\barK-1}$ on both sides of \eqref{eq:One_Step_Rec}, we have
\begin{equation}\label{eq:phi_mono_decreasing}
\mE[\Phi^{k+1}_{\barmu_{\barK}}-\Phi_{\barmu_{\barK}}^k\mid\F_{\barK-1}] \leq \frac{1-\nu}{2}\left(\frac{1}{\gamma^3}-1\right)\mE[\Delta_k^3\mid\F_{\barK-1}].
\end{equation}
Summing over $k\geq\barK$, and noting that $\mE[\Phi^{k}_{\barmu_{\barK}}\mid\F_{\barK-1}]$ is monotonically decreasing and bounded below by $\nu \cdot f_{\inf}$ (cf. Assumption \ref{ass:1-1}), we have
\begin{equation*}
-\infty < \sum_{k=\barK}^\infty \mE[\Phi^{k+1}_{\barmu_{\barK}}-\Phi_{\barmu_{\barK}}^k\mid\F_{\barK-1}]  \leq \frac{1-\nu}{2}\left(\frac{1}{\gamma^3}-1\right) \sum_{k=\barK}^\infty \mE[\Delta_k^3\mid\F_{\barK-1}].
\end{equation*}
Since $\Delta_k\geq 0$, by Tonelli's Theorem, we have $\mE[\sum_{k=\barK}^\infty \Delta_k^3\mid\F_{\barK-1}] < \infty$, which implies $P[\sum_{k=\barK}^{\infty}\Delta_k^3<\infty\mid\F_{\barK-1}]=1$. Since the conclusion holds for an arbitrarily given $\F_{\barK-1}$, we have $P[\sum_{k=\barK}^{\infty}\Delta_k^3<\infty]=1$, which implies that $\lim_{k\rightarrow\infty}\Delta_k=0$ with probability 1.
\end{proof}

\subsection{Global almost sure convergence}

The following result shows that the limit inferiors of both the KKT residual $\|\nabla\L_k\|$ and the negative curvature of the reduced Lagrangian Hessian $\tau_k^+$ are zero almost surely.

\begin{theorem}[Global first- and second-order convergence]\label{thm:liminf_result}
Under the conditions of Lemma \ref{lemma:One_Step_Rec}, we have both $\liminf_{k\rightarrow\infty}\|\nabla\L_k\|=0$ and $\liminf_{k\rightarrow\infty}\tau_k^+=0$ almost surely.
\end{theorem}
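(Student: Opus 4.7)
I would prove the theorem by contradiction using a random-walk argument on $\log\Delta_k$, with the first- and second-order claims treated by the same template. Suppose, contrary to the first claim, that $P(\liminf_k \|\nabla\L_k\| > 0) > 0$; then there exist $\epsilon > 0$ and an event $\mathcal{E}$ with $P(\mathcal{E}) > 0$ on which $\|\nabla\L_k\| \geq \epsilon$ for all large $k$. By Corollary \ref{coro:radius_conv_zero}, $\Delta_k \to 0$ almost surely, so after intersecting with a full-measure set, on $\mathcal{E}$ there is a random index $K_1$ such that the condition \eqref{delta:lemma_guarantee_succ_step_KKT} of Lemma \ref{lemma:guarantee_succ_step_KKT} holds for all $k \geq K_1$ --- the threshold in \eqref{delta:lemma_guarantee_succ_step_KKT} is linear in $\Delta_k$ and thus becomes dominated by $\epsilon$ once $\Delta_k$ is small enough.

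For $k \geq K_1$ on $\mathcal{E}$, Lemma \ref{lemma:guarantee_succ_step_KKT} guarantees the iteration is successful whenever $\A_k \cap \B_k \cap \C_k$ occurs, and in the small-$\Delta_k$ regime this yields $\log\Delta_{k+1} - \log\Delta_k = \log\gamma$. The conditional probability of the complement is at most $p_0 := p_h + p_g + p_f$, in which case the worst outcome is an unsuccessful step with $\log\Delta_{k+1} - \log\Delta_k = -\log\gamma$. The parameter choice \eqref{eq:nu_and_1-nu} forces $\nu$ close to $1$, so \eqref{eq:probability_1} yields $p_0 \leq p_h + p_g + 2p_f \leq (1-\nu)^2(1-1/\gamma)^2/(16\nu^2) < 1/2$. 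Consequently the conditional multiplicative drift satisfies $\mE[\log(\Delta_{k+1}/\Delta_k) \mid \F_{k-1}] \geq (1 - 2p_0)\log\gamma > 0$ on the relevant event.

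To extract a contradiction, define the indices $\sigma_n = \inf\{k \geq K_1 : \Delta_k \leq \epsilon/(C\gamma^n)\}$, where $C$ is the constant from \eqref{delta:lemma_guarantee_succ_step_KKT}. Since $\Delta_k \to 0$ on $\mathcal{E}$, each $\sigma_n < \infty$ almost surely on $\mathcal{E}$. However, on $\mathcal{E} \cap \{k \geq K_1\}$ the increments of $\log\Delta_k$ stochastically dominate a biased random walk with up-step probability $1 - p_0 > 1/2$ and step size $\log\gamma$, and the standard gambler's-ruin bound yields $P(\sigma_n < \infty \mid \F_{\sigma_m - 1}) \leq (p_0/(1-p_0))^{n-m} \to 0$ as $n \to \infty$ for any fixed $m$, contradicting $\sigma_n < \infty$ for all $n$ on the positive-probability event $\mathcal{E}$. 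The second-order claim follows from the same template with Lemma \ref{lemma:guarantee_succ_step_eigen} replacing Lemma \ref{lemma:guarantee_succ_step_KKT} and the good event augmented by $\C_k'$ to cover the SOC step. The main technical obstacle is that $\mathcal{E}$ is $\F_\infty$-measurable rather than adapted, so the coupling/stochastic-dominance claim must be proved via an adapted approximation $\mathcal{E}_k = \bigcap_{j=K_1}^k \{\|\nabla\L_j\| \geq \epsilon,\, \Delta_j \leq \epsilon/(C\gamma)\}$ and a telescoping conditional-probability argument, which is the main technical hurdle.
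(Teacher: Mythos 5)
Your core idea --- that on the event where $\|\nabla\L_k\|$ stays bounded away from zero, the process $\log_\gamma\Delta_k$ eventually acquires a strictly positive conditional drift (accurate models force successful iterations and hence $\Delta_{k+1}=\gamma\Delta_k$, while the complement has conditional probability below $1/2$), contradicting $\Delta_k\to 0$ --- is exactly the engine of the paper's proof, and your reduction via Corollary \ref{coro:radius_conv_zero} and Lemmas \ref{lemma:guarantee_succ_step_KKT}/\ref{lemma:guarantee_succ_step_eigen} is the same. Where you diverge is in how the drift is converted into a contradiction. You propose a gambler's-ruin/stopping-time bound $P(\sigma_n<\infty\mid\F_{\sigma_m-1})\leq(p_0/(1-p_0))^{n-m}$, which, as you yourself note, rests on a stochastic-dominance/coupling step that is only valid while the trajectory remains in the non-adapted event $\mathcal{E}$; this is the one step you do not carry out, and it is the crux. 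The paper sidesteps the issue entirely: it defines the \emph{global} process $w_k=\sum_{i=0}^{k-1}(2\cdot\boldsymbol{1}_{(\A_i\cap\B_i\cap\C_i\cap\C_i')}-1)$, shows $w_k\to\infty$ almost surely on the whole space via the martingale strong law ($\tilde{w}_k/k\to0$ combined with \eqref{eq:prob_ABCC'} and $p_h+p_g+2p_f<1/2$), and then uses the purely \emph{pathwise} inequality $b_k-b_{K_1'}\geq w_k-w_{K_1'}$, valid on the contradiction event where $b_k=\log_\gamma(\Delta_k/a)\leq 0$. Because the divergence of $w_k$ is established unconditionally, no adaptedness, coupling, or optional-stopping argument is ever needed. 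Your route can in principle be completed --- for instance by showing that $\theta^{-b_{k\wedge T}}$ is a supermartingale for $\theta=\sqrt{(1-p_0)/p_0}$, with $T$ the exit time from the adapted approximation $\mathcal{E}_k$ --- but until that step is written out the proof has a hole precisely at the point you flag, so I would recommend the paper's martingale-SLLN-plus-pathwise-domination device, which delivers the same conclusion with far less bookkeeping. Two small corrections: the complement probability should be $p_h+p_g+2p_f$ (two function-value events, $\C_k$ and $\C_k'$), not $p_h+p_g+p_f$, though both are below $1/2$ under \eqref{eq:probability_1}; and your threshold index must also enforce $\Delta_k\leq\Delta_{\max}/\gamma$ so that a successful iteration actually yields $\Delta_{k+1}=\gamma\Delta_k$ rather than being capped at $\Delta_{\max}$.
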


\begin{proof}

We note that the stochastic process $\tilde{w}_k = \sum_{i=0}^{k-1}(\boldsymbol{1}_{(\A_i\cap\B_i\cap\C_i\cap\C_i')} - \mE[\boldsymbol{1}_{(\A_i\cap\B_i\cap\C_i\cap\C_i')} | \mF_{i-1}])$ is a martingale since
\begin{equation*}
\mE[\tilde{w}_{k+1}|\mF_{k-1}] = \tilde{w}_k + \mE[\boldsymbol{1}_{(\A_k\cap\B_k\cap\C_k\cap\C_k')} | \mF_{k-1}] - \mE[\boldsymbol{1}_{(\A_k\cap\B_k\cap\C_k\cap\C_k')} | \mF_{k-1}] = \tilde{w}_k.
\end{equation*}
Using the fact that $\boldsymbol{1}_{(\A_i\cap\B_i\cap\C_i\cap\C_i')}\leq 1$ and \cite[Theorem 2.19]{Hall2014Martingale}, we know $\tilde{w}_k/k\rightarrow 0$ almost surely. Let us define $w_k = \sum_{i=0}^{k-1}(2\cdot\boldsymbol{1}_{(\A_i\cap\B_i\cap\C_i\cap\C_i')}-1)$, then
\begin{equation*}
\frac{w_k}{k} = \frac{2\tilde{w}_k}{k} + \frac{1}{k}\sum_{i=0}^{k-1}(2\mE[\boldsymbol{1}_{(\A_i\cap\B_i\cap\C_i\cap\C_i')} | \mF_{i-1}]-1) \stackrel{\eqref{eq:prob_ABCC'}}{\geq}  \frac{2\tilde{w}_k}{k} + 1 - 2(p_h+p_g+2p_f).
\end{equation*}
Since $p_h+p_g+2p_f< 0.5$ (as implied by \eqref{eq:probability_1}), we know from the above display that $w_k\rightarrow \infty$ almost surely.~With this result, we now prove $\liminf_{k\rightarrow\infty}\|\nabla\L_k\|=0$ almost surely by contradiction.
Suppose there exist $\epsilon_1>0$ and $K_1 \geq \barK$ such that for all $k\geq K_1$, $\|\nabla\L_k\| \geq \epsilon_1$. Since $\Delta_k\rightarrow 0$~by~Corollary~\ref{coro:radius_conv_zero}, there exists $K_1'\geq K_1$ such that for all $k\geq K_1'$,
\begin{equation*}
\Delta_k \leq a\coloneqq \min\left\{\frac{\Delta_{\max}}{\gamma},\frac{\epsilon_1}{\varphi}\right\}, \; \text{ with } \varphi \coloneqq \max\left\{ \kappa_B,\frac{4\kappa_f\max\{1,\Delta_{\max}\}+8\Upsilon_1}{\kappa_{fcd}(1-\eta)}\right\}+\kappa_g\max\{1,\Delta_{\max}\}.
\end{equation*}
Therefore, for all $k\geq K_1'$, we have $\|\nabla\L_k\|\geq \varphi\Delta_k$, which combined with Lemma \ref{lemma:guarantee_succ_step_KKT} shows that if $\A_k\cap\B_k\cap\C_k\cap\C_k'$ holds, then the iteration must be successful. Since $\Delta_k\leq\Delta_{\max}/\gamma$, we have $\Delta_{k+1}=\gamma\Delta_k$. On the other hand, if $(\A_k\cap\B_k\cap\C_k\cap\C_k')^c$ holds, the iteration can be~successful~or~not.~In this case, we have $\Delta_{k+1} \geq \Delta_k/ \gamma$. Let $b_k=\log_\gamma\left(\frac{\Delta_k}{a}\right)$, which satisfies $b_k\leq 0$ for all $k\geq K_1'$. In addition, for $k\geq K_1'$, if $\A_k\cap\B_k\cap\C_k\cap\C_k'$ holds, then $b_{k+1}=b_k+1$; otherwise, $b_{k+1} \geq b_k-1$.~From~the~definitions of $\{w_k\}$ and $\{b_k\}$, we know $b_k-b_{K_1'} \geq w_k-w_{K_1'}$ for all $k\geq K_1'$. Thus, $b_k\rightarrow\infty$, which contradicts $b_k\leq 0$ for all $k\geq K_1'$. This contradiction~concludes~$\liminf_{k\rightarrow\infty}\|\nabla\L_k\|=0$. Now,~we prove~$\liminf_{k\rightarrow\infty}\tau_k^+=0$ almost surely in a similar way.
Suppose there exist $\epsilon_2>0$ and $K_2\geq \barK$ such that for all $k\geq K_2$, $\tau_k^+\geq\epsilon_2$. 
Since $\Delta_k\rightarrow 0$, there exists $K_2' \geq K_2$ such that for all $k \geq K_2'$,
\begin{equation*}
\Delta_k \leq a'\coloneqq \min\left\{\frac{\Delta_{\max}}{\gamma},\frac{\epsilon_2}{\varphi'} \right\},\; \text{ with } \varphi'\coloneqq\max\left\{ \eta, \frac{4\kappa_f\max\{1,\Delta_{\max}\}+2\Upsilon_1+2\Upsilon_2}{(1-\eta)\kappa_{fcd}\min\{1,r\}}\right\}+\kappa_H.
\end{equation*}
The rest of the proof combines Lemma \ref{lemma:guarantee_succ_step_eigen}, defines $b'_k\coloneqq\log_\gamma\left(\frac{\Delta_k}{a'}\right)$, and uses the relation $b'_k-b'_{K_2'} \geq w_k-w_{K_2'}$ for all $k\geq K_2'$ to arrive at a contradiction.
\end{proof}

We now demonstrate that the above first-order convergence guarantee can be strengthened~to~limit-type convergence, which asserts that the limit of $\|\nabla\L_k\|$ is zero almost surely. To establish~this~result, we require the following lemma.

\begin{lemma}\label{lemma: finite_sum}
For any $\epsilon>0$, we let $\K_\epsilon = \{k: \|\nabla\L_k\|\geq\epsilon\}$. Under the conditions of Lemma~\ref{lemma:One_Step_Rec},~we have $\sum_{k\in \mathcal{K}_{\epsilon}}\Delta_k<\infty$ with probability $1$.
\end{lemma}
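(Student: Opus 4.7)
The plan is to strengthen the supermartingale inequality of Lemma~\ref{lemma:One_Step_Rec} on the iterations $k\in\K_\epsilon$ to obtain a drift of order $-\epsilon\Delta_k$ rather than $-\Delta_k^3$, and then to extract summability by telescoping together with a Tonelli-type argument. Since Corollary~\ref{coro:radius_conv_zero} already guarantees $\Delta_k\to 0$ almost surely, only the ``tail'' indices (on which $\Delta_k$ is already small) require the sharp bound; the remaining early indices lie in a finite set and are handled trivially.

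I first fix the deterministic threshold $M\coloneqq \epsilon/\varphi>0$, where $\varphi$ is the constant introduced in the proof of Theorem~\ref{thm:liminf_result}, so that $k\in\K_\epsilon$ together with $\Delta_k\leq M$ forces \eqref{delta:lemma_guarantee_succ_step_KKT} and thus places us in \textbf{Case 1} of Lemma~\ref{lemma:ABCC'_hold}. I then redo the expectation computation in the proof of Lemma~\ref{lemma:One_Step_Rec} with a single modification: instead of absorbing the $-\tfrac{1}{6}\nu\Upsilon_3\|\nabla\L_k\|\Delta_k$ term of \eqref{eq:lemma9_case1_final} into the cubic term via~\eqref{last_eq}, I keep it explicit. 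The bad-event bound from Lemma~\ref{lemma:Phi_(ABCC')^c} remains unchanged, and the $\barepsilon_k$ contributions still cancel exactly as in Lemma~\ref{lemma:One_Step_Rec}: the combined coefficient $\tfrac{1-\nu}{2}(\tfrac{1}{\gamma}-1)\barepsilon_k$ from the good and bad events is dominated by $2\nu\sqrt{p_h+p_g+2p_f}\,\barepsilon_k$ from Hölder's inequality applied to \eqref{def:reliable_est}, and \eqref{eq:probability_1} makes the sum non-positive. The bad-event $\Delta_k^3$ term is also non-positive. Finally, since \eqref{eq:probability_1} together with \eqref{eq:nu_and_1-nu} forces $p_h+p_g+2p_f$ to be small (in particular $\leq 1/2$), I obtain $P(\A_k\cap\B_k\cap\C_k\cap\C_k'\mid\F_{k-1})\geq 1/2$, which yields the sharpened recursion
\begin{equation*}
\mE[\Phi_{\barmu_{\barK}}^{k+1}\mid\F_{k-1}]-\Phi_{\barmu_{\barK}}^k\;\leq\;-\frac{\nu\Upsilon_3\epsilon}{12}\Delta_k\cdot\mathbf{1}_{\{k\in\K_\epsilon,\;\Delta_k\leq M\}},\qquad\forall\, k\geq\barK.
\end{equation*}

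Telescoping from $k=\barK$ to $k=N$, conditioning on $\F_{\barK-1}$, and applying the deterministic lower bound $\Phi_{\barmu_{\barK}}^{N+1}\geq\nu f_{\inf}$ from Assumption~\ref{ass:1-1} gives
\begin{equation*}
\mE\Bigl[\sum_{k=\barK,\,k\in\K_\epsilon,\,\Delta_k\leq M}^{N}\Delta_k\;\Big|\;\F_{\barK-1}\Bigr]\;\leq\;\frac{12}{\nu\Upsilon_3\epsilon}\bigl(\Phi_{\barmu_{\barK}}^{\barK}-\nu f_{\inf}\bigr),
\end{equation*}
which is almost surely finite. Letting $N\to\infty$ and invoking Tonelli's theorem (using $\Delta_k\geq 0$) delivers $\sum_{k\in\K_\epsilon,\,\Delta_k\leq M}\Delta_k<\infty$ a.s. For the residual indices, Corollary~\ref{coro:radius_conv_zero} implies $|\{k\geq\barK:\Delta_k>M\}|<\infty$ almost surely, so that $\sum_{k\in\K_\epsilon,\,\Delta_k>M}\Delta_k\leq\Delta_{\max}\cdot|\{k\in\K_\epsilon:\Delta_k>M\}|<\infty$ a.s. Adding the two contributions yields $\sum_{k\in\K_\epsilon}\Delta_k<\infty$ almost surely.

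The main obstacle will be the bookkeeping in the strengthened one-step inequality: one must extract the $-\|\nabla\L_k\|\Delta_k$ reduction from \textbf{Case 1} of Lemma~\ref{lemma:ABCC'_hold} while preserving both the delicate $\barepsilon_k$ cancellation (which hinges critically on~\eqref{eq:probability_1}) and the non-positivity of the bad-event $\Delta_k^3$ contribution. A secondary subtlety is the randomness of the threshold $\barK$, which is handled cleanly by conditioning on $\F_{\barK-1}$ before telescoping and integrating out at the end.
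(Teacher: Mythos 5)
Your proposal is correct and follows essentially the same route as the paper's proof: both sharpen the one-step recursion of Lemma~\ref{lemma:One_Step_Rec} on indices where $\|\nabla\L_k\|\geq\varphi\Delta_k$ to obtain a drift of order $-\epsilon\Delta_k$ (using the same H\"older/\eqref{def:reliable_est} cancellation of the $\barepsilon_k$ terms and the bound $p_h+p_g+2p_f<1/2$), then telescope, invoke Tonelli, and dispose of the finitely many indices with $\Delta_k$ large via Corollary~\ref{coro:radius_conv_zero}. The only (cosmetic, arguably cleaner) difference is that you encode the split through the $\F_{k-1}$-measurable indicator $\{\Delta_k\leq\epsilon/\varphi\}$ rather than through the realization-dependent time $K_3$ used in the paper.
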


\begin{proof}
Let $\varphi$ be as in the proof of Theorem \ref{thm:liminf_result}. We first consider the reduction in $\Phi_{\barmu_{\barK}}^k$~when~$\|\nabla\L_k\|\geq\varphi\Delta_k$. On the event $\A_k\cap\B_k\cap\C_k\cap\C_k'$, \eqref{eq:lemma9_case1_final} holds, while on the event $(\A_k\cap\B_k\cap\C_k\cap\C_k')^c$, \eqref{eq:lemma_phi_reduction_not_all_acc}~holds.~Thus, we have
\begin{align*}
& \mE[\Phi_{\barmu_{\barK}}^{k+1}\mid\F_{k-1}]- \Phi_{\barmu_{\barK}}^k \\
& = \mE[(\Phi_{\barmu_{\barK}}^{k+1}- \Phi_{\barmu_{\barK}}^k)\boldsymbol{1}_{(\A_k\cap\B_k\cap\C_k\cap\C_k')} \mid\F_{k-1}] +\mE[(\Phi_{\barmu_{\barK}}^{k+1}- \Phi_{\barmu_{\barK}}^k )\boldsymbol{1}_{(\A_k\cap\B_k\cap\C_k\cap\C_k')^c}\mid\F_{k-1}]\\
& \leq -\mE[\boldsymbol{1}_{(\A_k\cap\B_k\cap\C_k\cap\C_k')} \mid\F_{k-1}]\cdot \frac{\nu\Upsilon_3}{6}\|\nabla\L_k\|\Delta_k +\mE[\boldsymbol{1}_{(\A_k\cap\B_k\cap\C_k\cap\C_k')^c} \mid\F_{k-1}]\cdot\frac{1-\nu}{2}\left(\frac{1}{\gamma^3}-1\right)\Delta_k^3\\
&\quad + \frac{1-\nu}{2}\left(\frac{1}{\gamma} - 1\right)\barepsilon_k +\nu \cdot \mE[(|f_{s_k}-\barf_{s_k}| + |f_k-\barf_k|)\boldsymbol{1}_{(\A_k\cap\B_k\cap\C_k\cap\C_k')^c}\mid\F_{k-1}] \\
& = \mE[\boldsymbol{1}_{(\A_k\cap\B_k\cap\C_k\cap\C_k')} \mid\F_{k-1}]\cbr{-\frac{\nu\Upsilon_3}{6}\|\nabla\L_k\|\Delta_k -  \frac{1-\nu}{2}\left(\frac{1}{\gamma^3}-1\right)\Delta_k^3} +  \frac{1-\nu}{2}\left(\frac{1}{\gamma^3}-1\right)\Delta_k^3\\
&\quad + \frac{1-\nu}{2}\left(\frac{1}{\gamma} - 1\right)\barepsilon_k +\nu \cdot \mE[(|f_{s_k}-\barf_{s_k}| + |f_k-\barf_k|)\boldsymbol{1}_{(\A_k\cap\B_k\cap\C_k\cap\C_k')^c}\mid\F_{k-1}] \\
&\stackrel{\mathclap{\eqref{last_eq}, \eqref{eq:prob_ABCC'}}}{ \leq}\; -(1-p_h-p_g-2p_f)\frac{\nu\Upsilon_3}{6}\|\nabla\L_k\|\Delta_k   +(p_h+p_g+2p_f)\frac{1-\nu}{2}\left(\frac{1}{\gamma^3}-1\right)\Delta_k^3+ \frac{1-\nu}{2}\left(\frac{1}{\gamma} - 1\right)\barepsilon_k \\
&\quad +\nu \cdot \mE[(|f_{s_k}-\barf_{s_k}| + |f_k-\barf_k|)\boldsymbol{1}_{(\A_k\cap\B_k\cap\C_k\cap\C_k')^c}\mid\F_{k-1}].
\end{align*}
Using the H\"older's inequality and the condition \eqref{def:reliable_est}, we have
\begin{align}\label{eq:reduction_radius_less_than_KKT}
\mE[\Phi_{\barmu_{\barK}}^{k+1}\mid\F_{k-1}]- \Phi_{\barmu_{\barK}}^k & \leq -(1-p_h-p_g-2p_f)\frac{\nu\Upsilon_3}{6}\|\nabla\L_k\|\Delta_k  +(p_h+p_g+2p_f)\frac{1-\nu}{2}\left(\frac{1}{\gamma^3}-1\right)\Delta_k^3 \notag\\
&\quad + \frac{1-\nu}{2}\left(\frac{1}{\gamma} - 1\right)\barepsilon_k +2 \nu \cdot \sqrt{p_h+p_g+2p_f}\cdot\barepsilon_k \notag\\
&\stackrel{\eqref{eq:probability_1}}{ \leq} -(1-p_h-p_g-2p_f)\frac{\nu\Upsilon_3}{6}\|\nabla\L_k\|\Delta_k.
\end{align}
Since $\Delta_k\rightarrow 0$ almost surely, for each realization of Algorithm \ref{Alg:STORM}, there exists a finite $K_3\geq\barK$ such that for all $k\geq K_3$, we have $\Delta_k\leq \epsilon/\varphi$. 
Let $\tilde{\mathcal{K}}_{\epsilon} = \K_\epsilon\cap \{k: k\geq K_3\}$. For $k\in \tilde{\mathcal{K}}_{\epsilon}$, we have~$\|\nabla\L_k\|\geq\varphi\Delta_k$ so that the reduction \eqref{eq:reduction_radius_less_than_KKT} is achieved. Since $\|\nabla\L_k\|\geq\epsilon$ for all $k\in \tilde{\mathcal{K}}_{\epsilon}$,~we~further have\vskip-0.15cm
\begin{equation*}
\mE[\Phi_{\barmu_{\barK}}^{k+1}\mid\F_{k-1}] - \Phi_{\barmu_{\barK}}^k \leq  -\frac{1}{6}(1-p_h-p_g-2p_f)\nu\Upsilon_3\epsilon\cdot\Delta_k.
\end{equation*}
Taking the conditional expectation with respect to $\F_{\barK-1}$ on both sides, and recalling that $\mE[\Phi_{\barmu_{\barK}}^k\mid\F_{\barK-1}]$ is monotone decreasing in $k$ and bounded below (cf. Assumption \ref{ass:1-1}), we have $\sum_{k\in\tilde{\mathcal{K}}_{\epsilon}}\mE[\Delta_k\mid\F_{\barK-1}]<\infty$.
By Tonelli's theorem, we have $\mE[\sum_{k\in\tilde{\mathcal{K}}_{\epsilon}}\Delta_k\mid\F_{\barK-1}]<\infty$ and thus $P[\sum_{k\in\tilde{\mathcal{K}}_{\epsilon}}\Delta_k<\infty \mid\F_{\barK-1}]=1$. Since the conclusion holds for any $\mF_{\barK-1}$, we have $P[\sum_{k\in\tilde{\mathcal{K}}_{\epsilon}}\Delta_k<\infty]=1$.
Since $\mathcal{K}_{\epsilon}\subseteq \tilde{\mathcal{K}}_{\epsilon} \cup \{k\leq K_3\}$, $\Delta_k\leq\Delta_{\max}$, and $K_3$ is finite, we complete the proof.
\end{proof}

Finally, we state the limit-type first-order global convergence guarantee for Algorithm \ref{Alg:STORM}.

\begin{theorem}[Stronger first-order convergence]\label{thm:limit_result}
Under the conditions of Lemma \ref{lemma:One_Step_Rec}, we have $\lim_{k\rightarrow \infty} \|\nabla\L_k\|=0$ almost surely.
\end{theorem}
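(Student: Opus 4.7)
The plan is to prove $\lim_{k\to\infty}\|\nabla\mL_k\|=0$ almost surely by contradiction, combining the $\liminf$ statement of Theorem \ref{thm:liminf_result} with the summability statement of Lemma \ref{lemma: finite_sum} via a standard oscillation argument.

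First, I will observe that the map $\bx\mapsto \nabla\mL(\bx)\coloneqq(\nabla f(\bx)+G(\bx)^T\blambda(\bx),\,c(\bx))$, with $\blambda(\bx)=-[G(\bx)G(\bx)^T]^{-1}G(\bx)\nabla f(\bx)$, is Lipschitz continuous on $\Omega$ with some constant $L_{\nabla\mL}>0$. This follows from Assumption \ref{ass:1-1}: $\nabla f$ and $G$ are Lipschitz and bounded, $G(\bx)G(\bx)^T$ is uniformly positive definite (so its inverse is Lipschitz), and therefore $\blambda(\bx)$ is bounded and Lipschitz, giving Lipschitz continuity of $\nabla\mL$. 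Next, I bound the per-iteration displacement $\|\bx_{k+1}-\bx_k\|$. Since $\|\Delta\bx_k\|\leq\Delta_k$ and, by \eqref{eq:length_of_correctional_step}, $\|\bd_k\|\leq (L_G/\sqrt{\kappa_{1,G}})\Delta_k^2\leq (L_G\Delta_{\max}/\sqrt{\kappa_{1,G}})\Delta_k$, there exists $C>0$ with $\|\bx_{k+1}-\bx_k\|\leq C\Delta_k$ for every $k$, whether or not the SOC step is performed and whether or not the iteration is successful.

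Now suppose, toward contradiction, that on an event $E$ of positive probability, $\limsup_{k\to\infty}\|\nabla\mL_k\|>0$. On $E\cap\{\liminf_k\|\nabla\mL_k\|=0\}$, which by Theorem \ref{thm:liminf_result} still has positive probability, pick a rational $\epsilon>0$ such that $\limsup_k\|\nabla\mL_k\|>2\epsilon$. By the oscillation between $0$ and values above $2\epsilon$, I can extract (pathwise) two interleaved infinite subsequences $\{m_j\}$ and $\{n_j\}$ with $m_j<n_j<m_{j+1}$ such that $\|\nabla\mL_{m_j}\|\geq 2\epsilon$, $\|\nabla\mL_{n_j}\|<\epsilon$, and $\|\nabla\mL_k\|\geq \epsilon$ for every $m_j\leq k<n_j$ (take $m_j$ to be the first index after $n_{j-1}$ at which the residual exceeds $2\epsilon$, and $n_j$ to be the first index thereafter at which it drops below $\epsilon$). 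In particular, every $k\in[m_j,n_j)$ lies in $\mathcal{K}_\epsilon$ from Lemma \ref{lemma: finite_sum}.

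By Lipschitzness and the displacement bound,
\begin{equation*}
\epsilon \;\leq\; \|\nabla\mL_{m_j}\|-\|\nabla\mL_{n_j}\| \;\leq\; L_{\nabla\mL}\|\bx_{n_j}-\bx_{m_j}\| \;\leq\; L_{\nabla\mL}C\sum_{k=m_j}^{n_j-1}\Delta_k.
\end{equation*}
Lemma \ref{lemma: finite_sum} gives $\sum_{k\in\mathcal{K}_\epsilon}\Delta_k<\infty$ almost surely, so the tail sum $\sum_{k=m_j}^{n_j-1}\Delta_k\to 0$ as $j\to\infty$ (since all these indices belong to $\mathcal{K}_\epsilon$). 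This contradicts the uniform lower bound $\epsilon$, and hence $\limsup_k\|\nabla\mL_k\|=0$ almost surely, proving the theorem.

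The main obstacle is the extraction of the two subsequences with the right property that \emph{all} intermediate indices lie in $\mathcal{K}_\epsilon$; this is what allows the telescoping displacement to be controlled by a tail of the convergent series from Lemma \ref{lemma: finite_sum}. Everything else is a routine Lipschitz/summability manipulation.
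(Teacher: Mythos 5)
Your proposal is correct and follows essentially the same route as the paper: a contradiction argument that extracts interleaved subsequences $\{m_j\}$, $\{n_j\}$ straddling levels $\epsilon$ and $2\epsilon$, bounds the change in $\|\nabla\L_k\|$ by a Lipschitz constant times the accumulated displacement $\sum\Delta_k$ over indices in $\K_\epsilon$, and invokes Lemma \ref{lemma: finite_sum}. The only (cosmetic) difference is that you include $m_j$ itself in $\K_\epsilon$ and conclude via the vanishing tail of the convergent series, whereas the paper excludes $\Delta_{m_i}$ using $\Delta_k\to 0$ and derives $\sum_{j\in\K_\epsilon}\Delta_j=\infty$; both are valid.
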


\begin{proof}

We prove by contradiction. Suppose for a realization of Algorithm \ref{Alg:STORM}, there exist an $\epsilon>0$ and an infinite index set $\K_1$ such that $\|\nabla \L_k\|\geq2\epsilon$ for all $k\in\K_1$. By Theorem \ref{thm:liminf_result}, we~know~for~the~realization considered, there exists an infinite index set $\K_2$ such that~\mbox{$\|\nabla \L_k\|< \epsilon$}~for~all~\mbox{$k\in \K_2$}.~Thus,~there are index sets $\{m_i\}$ and $\{n_i\}$ with $m_i<n_i$ such that for all $i\geq 0$,
\begin{equation*}
\|\nabla \L_{m_i}\|\geq 2\epsilon,\quad \|\nabla \L_{n_i}\|<\epsilon,\quad 
\text{and }\;\; \|\nabla \L_k\|\geq\epsilon \;\;\text{ for }\; k\in\{m_i+1,\cdots,n_i-1\}.
\end{equation*}
By the algorithm design, for all $j\geq 0$, we have\vskip-0.5cm
\begin{equation}\label{eq:stepsize}
\|\bx_{j+1}-\bx_{j}\|\leq \|\Delta\bx_j\|+\|\bd_j\| \stackrel{\eqref{eq:length_of_correctional_step}}{\leq} \|\Delta\bx_j\|+\frac{L_G}{\sqrt{\kappa_{1,G}}}\|\Delta\bx_j\|^2 
\leq \left(1+\frac{L_G\Delta_{\max}}{\sqrt{\kappa_{1,G}}}\right)\Delta_{\max},
\end{equation}
where the last inequality is due to $\Delta_j\leq\Delta_{\max}$. Thus, by Assumption~\ref{ass:1-1} and $\nabla\L_k=(P_kg_k,c_k)$,~there exist constants $L_{\nabla \L,1}, L_{\nabla \L,2}>0$ such that $\|\nabla \L_{j+1}-\nabla\L_{j}\| \leq L_{\nabla\L,1}(\|\bx_{j+1}-\bx_{j}\|+\|\bx_{j+1}-\bx_{j}\|^2) \leq L_{\nabla\L,2}\|\bx_{j+1}-\bx_{j}\|$ for $ j\geq 0$. Then, \vskip-0.5cm
\begin{align*}
\epsilon & < \left|\|\nabla\L_{n_i}\|-\|\nabla\L_{m_i}\|\right|
 \leq \sum_{j=m_i}^{n_i-1}\|\nabla\L_{j+1}-\nabla\L_{j}\|
\leq L_{\nabla\L,2}\sum_{j=m_i}^{n_i-1}\|\bx_{j+1}-\bx_j\| \\
& \stackrel{\mathclap{\eqref{eq:stepsize}}}{\leq}L_{\nabla\L,2}\left(1+\frac{L_G\Delta_{\max}}{\sqrt{\kappa_{1,G}}}\right)
\sum_{j=m_i}^{n_i-1}\Delta_j = L_{\nabla\L,2}\left(1+\frac{L_G\Delta_{\max}}{\sqrt{\kappa_{1,G}}}\right)\left(\Delta_{m_i}+\sum_{j=m_i+1}^{n_i-1}\Delta_j\right).
\end{align*}
Since $\Delta_k$ converges to zero, we have $L_{\nabla\L,2}\left(1+L_G\Delta_{\max}/\sqrt{\kappa_{1,G}}\right)\Delta_{m_i} \leq \epsilon/2$ for $i$ large enough.~Thus, we obtain $L_{\nabla\L,2}\left(1+L_G\Delta_{\max}/\sqrt{\kappa_{1,G}}\right) \sum_{j=m_i+1}^{n_i-1}\Delta_j>\epsilon/2>0$. Since $\sum_i\sum_{j=m_i+1}^{n_i-1}\Delta_j\leq \sum_{j\in\K_{\epsilon}}\Delta_j$, we have $\sum_{j\in \K_{\epsilon}}\Delta_j=\infty$, which contradicts Lemma \ref{lemma: finite_sum}. This completes the proof.
\end{proof}

We have finished the convergence analysis of Algorithm \ref{Alg:STORM}. In particular, Theorem \ref{thm:limit_result} strengthens the liminf-type to limit-type for the first-order convergence guarantee, and shows that the~\mbox{iterates} generated by Algorithm \ref{Alg:STORM} have vanishing KKT residuals almost surely.~This result matches the~\mbox{first-order} conclusion in \cite{Chen2017Stochastic} for trust-region methods in unconstrained problems.

We also mention that strengthening the liminf-type to limit-type for the second-order convergence guarantee is challenging. Technically, for an eigen step, the predicted reduction of the merit function $\text{Pred}_k$ in \eqref{def:Pred_k} involves the term $\bartau_k^+\Delta_k^2$.~Using a proof similar to Lemma~\ref{lemma: finite_sum}~would~lead~to~$\sum_{k\in \K_{\epsilon}'} \Delta_k^2 < \infty$, where $\K_{\epsilon}' = \{k:\bartau_k^+ \geq \epsilon\}$. However, this fact would not lead to any contradiction with~$\sum_{k\in \K_{\epsilon}'} \Delta_k = \infty$. That being said, Theorem \ref{thm:liminf_result} suggests that there exists a subsequence of iterates with vanishing negative curvature of the reduced Lagrangian Hessian. This result also matches the state-of-the-art second-order conclusion in \cite{Blanchet2019Convergence} for trust-region methods in unconstrained problems.

\subsection{Merit parameter behavior}
\label{subsec:merit_para}

In this subsection, we investigate the behavior of merit parameter $\barmu_k$ and demonstrate the reasonability of Assumption \ref{ass:4-1}. 
We prove that Assumption \ref{ass:4-1} holds when the gradient estimate $\barg_k$ is bounded above and the (Lagrangian) Hessian estimate $\barH_k$ is bounded both above and below.~In~particular, we introduce the following assumption.

\begin{assumption}\label{ass:bdd_err}
For all $k \geq 0$, (i) there exists $M>0$ such that $\|\barg_k-g_k\|\leq M$; (ii)~there~exists $\kappa_B>0$ such that $1/\kappa_B\leq \|\barH_k\|\leq \kappa_B$.
\end{assumption}

The above assumption is consistent with \cite[Assumption 4.12]{Fang2024Fully}.
The upper~boundedness condition of $\barg_k$ is commonly imposed in the SSQP literature (see \cite{Berahas2021Sequential, Berahas2023Stochastic,Na2022adaptive,Na2023Inequality,Curtis2024Stochastic}), and is satisfied, for example, when the objective has a finite-sum form (i.e., sampling from the empirical distribution as in many machine learning problems). The upper boundedness condition of $\barH_k$ is restated from Assumption \ref{ass:1-1} (and~Lemma \ref{lemma:bounded_Hessian}), which is equivalent to assuming the upper boundedness of the objective~Hessian~noise $\|\bar{\nabla}^2f_k-\nabla^2f_k\|$~(cf.~\eqref{eq:lemma1_1}).

In addition, the lower boundedness of $\barH_k$ is a mild regularity condition. For first-order stationarity, we do not require $\barH_k$ to be a precise estimate of the Lagrangian Hessian $\nabla_{\bx}^2\mL_k$. We can set $\barH_k$ as the identity matrix, the estimated Hessian, the averaged Hessian, or the quasi-Newton update; all these reasonable constructions are naturally bounded away from zero.
For second-order stationarity, we~let $\barH_k=\bar{\nabla}^2 f_k+\sum_{i=1}^n\barblambda_k\nabla^2 c^i_k$ be an estimate of the Lagrangian Hessian (see Step 1 in Section~\ref{sec:3.2}).~As suggested by second-order sufficient condition \cite[Chapter 12]{Nocedal2006Numerical}, it is also~very reasonable to have a non-vanishing Hessian estimate (especially for large $k$) in order to exploit the curvature information and converge to a non-trivial second-order stationary points.

We note that, in contrast to \cite{Sun2023trust},~the upper and lower bounds for the~quantities in our study are unknown and not involved in our trust-region algorithm design.

\begin{lemma}\label{lemma: mu_stabilize}
Assumptions \ref{ass:1-1} and \ref{ass:bdd_err} imply Assumption \ref{ass:4-1}.~In particular, under \mbox{Assumptions}~\ref{ass:1-1} and \ref{ass:bdd_err}, there exist an (potentially stochastic) iteration threshold $\barK<\infty$ and a deterministic~constant $\hat{\mu}$, such that~$\barmu_k=\barmu_{\barK}\leq \hatmu$ for all  $k \geq \barK$.
\end{lemma}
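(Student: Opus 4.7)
The strategy is to exhibit a deterministic threshold $\hatmu$ such that whenever $\barmu_k \geq \hatmu$, the sufficient-reduction condition \eqref{eq:threshold_Predk} is satisfied automatically. Because the inner update $\barmu_k \leftarrow \rho\barmu_k$ in Step 3 of Algorithm~\ref{Alg:STORM} is invoked only when \eqref{eq:threshold_Predk} fails, and $\rho>1$ is fixed, $\{\barmu_k\}$ is monotone non-decreasing and can grow only by discrete multiplicative jumps. Hence identifying any such finite $\hatmu$ immediately yields the claim with $\barK$ defined as the first (finite) index at which $\barmu_k \geq \hatmu$; finiteness holds because a geometric sequence with ratio $\rho>1$ cannot remain strictly below $\hatmu$ indefinitely.

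Under Assumption~\ref{ass:bdd_err} together with Assumption~\ref{ass:1-1}, every ``problem-side'' quantity entering $\text{Pred}_k$ is uniformly bounded: $\|\barg_k\| \leq \kappa_{\nabla f} + M =: \bar{g}$, $\|\barblambda_k\| \leq \bar{g}/\sqrt{\kappa_{1,G}}$, $1/\kappa_B \leq \|\barH_k\| \leq \kappa_B$, $\|\bv_k\| \leq \kappa_c/\sqrt{\kappa_{1,G}}$, and $\|\Delta\bx_k\| \leq \Delta_{\max}$. Decomposing $\Delta\bx_k = \bw_k + Z_k\bu_k$ and using $G_kZ_k = \b0$ together with $G_k\bw_k = -\bargamma_k c_k$, we write
\begin{equation*}
\text{Pred}_k = A_k + B_k - \barmu_k\bargamma_k\|c_k\|,
\end{equation*}
with $A_k \coloneqq \barg_k^T\bw_k + \tfrac{1}{2}\bw_k^T\barH_k\bw_k$ and $B_k \coloneqq (\barg_k+\barH_k\bw_k)^T Z_k\bu_k + \tfrac{1}{2}(Z_k\bu_k)^T\barH_k(Z_k\bu_k)$. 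The uniform bounds together with $\|\bw_k\| \leq \breve{\Delta}_k$ yield $|A_k| \leq (\bar{g} + \tfrac{1}{2}\kappa_B\Delta_{\max})\breve{\Delta}_k$, while the tangential term $B_k$ is controlled by either the Cauchy reduction \eqref{eq:cauchy1} (gradient step) or the curvature reduction \eqref{nequ:2} (eigen step); in both cases $B_k \leq 0$ and matches, up to an $O(\|\barH_k\bw_k\|) = O(\breve{\Delta}_k)$ perturbation, the ``tangential half'' of the RHS of \eqref{eq:threshold_Predk}.

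The crucial step is to show that $\bargamma_k\|c_k\|$ dominates the ``normal half'' of the RHS of \eqref{eq:threshold_Predk} up to a fixed constant. When $c_k \neq \b0$, combining $\|\bv_k\| \leq \|c_k\|/\sqrt{\kappa_{1,G}}$ with $\bargamma_k = \min\{\breve{\Delta}_k/\|\bv_k\|,1\}$ gives $\bargamma_k\|c_k\| \geq \min\{\sqrt{\kappa_{1,G}}\breve{\Delta}_k,\|c_k\|\}$; together with the rescalings in \eqref{nequ:1} and the radius decompositions \eqref{eq:breve and tilde_delta_k}, \eqref{eq:breve and tilde_delta_k_2}, a case analysis over (gradient vs.\ eigen) $\times$ ($\bargamma_k < 1$ vs.\ $\bargamma_k = 1$) establishes $\bargamma_k\|c_k\| \geq c_\star \cdot (\text{RHS of }\eqref{eq:threshold_Predk})$ for a constant $c_\star > 0$ depending only on $\kappa_{1,G},\kappa_{2,G},\kappa_B,\kappa_{fcd},\Delta_{\max}$. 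When $c_k = \b0$, both $\breve{\Delta}_k$ and $\bargamma_k\|c_k\|$ vanish, so $A_k = 0$ and \eqref{eq:threshold_Predk} collapses to the already-satisfied tangential bound on $B_k$. Choosing $\hatmu$ large enough that $\barmu_k\bargamma_k\|c_k\|$ dominates $|A_k|$ plus the $O(\breve{\Delta}_k)$ mismatch completes the argument. The hard part will be this last case analysis: the RHS of \eqref{eq:threshold_Predk} is a maximum of two heterogeneous quantities and the Cauchy/curvature bound on $B_k$ only captures one of them at a time, so tracking the normal-step perturbation $\barH_k\bw_k$ inside $\|Z_k^T(\barg_k+\barH_k\bw_k)\|$ and unpacking the scale-invariant proportionalities in \eqref{nequ:1}--\eqref{eq:breve and tilde_delta_k_2} carries the bulk of the bookkeeping, paralleling the merit-parameter analysis of \cite{Fang2024Fully}.
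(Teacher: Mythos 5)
Your overall route is the paper's (Appendix \ref{append:1}): exhibit a deterministic threshold $\tilde{\mu}$ above which \eqref{eq:threshold_Predk} holds automatically, decompose $\text{Pred}_k$ into a normal part, a tangential part controlled by the Cauchy/curvature reduction, and the $-\barmu_k\bargamma_k\|c_k\|$ term, and then conclude $\barmu_k\leq\rho\tilde{\mu}\eqqcolon\hatmu$ from the discrete multiplicative updates. (Minor slip: $\barK$ should be the index at which $\barmu_k$ stops changing, not the first index with $\barmu_k\geq\hatmu$ --- the sequence may stabilize strictly below the threshold; the conclusion is unaffected.) However, your displayed key inequality $\bargamma_k\|c_k\|\geq c_\star\cdot(\text{RHS of }\eqref{eq:threshold_Predk})$ is false and would sink the argument if taken literally: let $\|c_k\|\to 0$ while $\|\bar{\nabla}_{\bx}\L_k\|$ stays of order one; then the left side is at most $\|c_k\|\to 0$, but the right side stays bounded away from zero through $\|\bar{\nabla}\L_k\|\min\{\Delta_k,\|\bar{\nabla}\L_k\|/\|\barH_k\|\}$. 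What your surrounding prose about the ``normal half'' suggests you intend --- and what actually works --- is that the \emph{discrepancy} between the tangential reduction and the full RHS consists of terms each bounded by a fixed constant times $\bargamma_k\|c_k\|$: the feasibility contributions ($\|c_k\|\Delta_k$, $\|c_k\|^2/\|\barH_k\|$, or $\bartau_k^+\Delta_k\|c_k\|$), the normal-step perturbations $\bargamma_k\|\barg_k\|\|\bv_k\|$ and $\bargamma_k\|\barH_k\|\|\bv_k\|\tilde{\Delta}_k$, and the $\Delta_k$-versus-$\tilde{\Delta}_k$ gap $\|\bar{\nabla}_{\bx}\L_k\|\breve{\Delta}_k$. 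For that last term the bound is \emph{not} automatic from ``$O(\breve{\Delta}_k)$'': when $\bargamma_k=1$ one has $\breve{\Delta}_k/\|c_k\|=\Delta_k/(\|G_k\|\|\bar{\nabla}\L_k^{RS}\|)$, which is unbounded; the ratio to $\bargamma_k\|c_k\|$ is controlled only because the prefactor $\|\bar{\nabla}_{\bx}\L_k\|$ cancels the $1/\|\bar{\nabla}\L_k^{RS}\|$ via $\|\bar{\nabla}\L_k^{RS}\|\geq\|\bar{\nabla}_{\bx}\L_k\|/\|\barH_k\|$. So the bookkeeping you defer is precisely where your stated inequality breaks, and it must be replaced by this term-by-term absorption.

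You also flag the max of two heterogeneous quantities in \eqref{eq:threshold_Predk} as ``the hard part'' because the Cauchy/curvature bound captures only one of them at a time. This difficulty dissolves once you use the step-selection rule \eqref{eq:cri_step_comput}: a gradient step is taken exactly when the gradient term attains the max, and an eigen step otherwise, so in each branch of the case analysis the max collapses to the single term your reduction bound matches. The paper opens each case with exactly this observation, reducing \eqref{eq:threshold_Predk} to \eqref{nequ:6} or \eqref{nequ:8} respectively; with that in hand, the remaining work is the case analysis on $\bargamma_k$ (and on $\tilde{\Delta}_k$ versus $\|\bar{\nabla}_{\bx}\L_k\|/\|\barH_k\|$) that produces the explicit thresholds $\hat{\mu}_1,\ldots,\hat{\mu}_5$.
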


\begin{proof}
See Appendix \ref{append:1}.
\end{proof}

Existing line-search-based SSQP methods require the stochastic merit parameter $\barmu_k$ not only to be stabilized but also to be stabilized at a sufficiently large value to demonstrate global (first-order)~convergence \citep{Berahas2021Sequential,Berahas2023Stochastic,Berahas2023Accelerating,Na2022adaptive,Na2023Inequality,Curtis2024Stochastic}. Without a sufficiently large merit parameter, these methods cannot establish a connection~\mbox{between}~the~\mbox{stochastic}~reduction of the merit function and the true KKT residual. To achieve this requirement, \cite{Berahas2021Sequential, Berahas2023Stochastic, Berahas2023Accelerating, Curtis2024Stochastic} additionally assumed a symmetric estimation noise,~while \cite{Na2022adaptive, Na2023Inequality} imposed a stronger feasibility condition when selecting the merit parameter.~Our method eliminates this requirement. By computing a gradient step (or an eigen step), our stochastic reduction of the merit function \eqref{eq:threshold_Predk} is proportional to the estimated KKT residual (or the negative curvature).~Then, by leveraging the design of random models, we can naturally bridge the gap between the stochastic merit function~\mbox{reduction}~and~the~true~KKT~\mbox{residual}~(or~the~true~\mbox{negative}~\mbox{curvature}),~as proved in Lemmas \ref{lemma:ABCC'_hold} and \ref{lemma:Phi_(ABCC')^c} and applied in Lemma \ref{lemma:One_Step_Rec}.

\section{Numerical Experiment}\label{sec:5}

We explore the empirical performance of TR-SQP-STORM (Algorithm \ref{Alg:STORM}). We implement the method both on a subset of equality-constrained problems from the benchmark CUTEst test set \citep{Gould2014CUTEst} and on constrained logistic regression problems using synthetic datasets~and~real~datasets from the UCI repository. In addition, we implement a saddle-point problem to examine the~capability~of~our methods to escape saddle points.
For all problems, our method is implemented for both first- and 
second-order stationarity, referred to as TR-SQP-STORM and TR-SQP-STORM2, respectively.~We compare the~performance of our method with an adaptive line-search-based~SSQP~\mbox{algorithm}~(Algorithm 3 in \cite{Na2022adaptive}, referred to as AL-SSQP below),
which is developed under a~\mbox{similar}~random model setup but offers only first-order guarantees. To investigate the role of the merit function, we also replace the augmented Lagrangian merit function in that algorithm with the same $\ell_2$ merit function we used, referring to this modified algorithm as $\ell_2$-SSQP method.

\subsection{Algorithm setups}

To estimate objective values, gradients, and Hessians, batches of samples are generated in each iteration with batch sizes selected adaptively. We denote the batch sizes as $|\xi_f^k|, |\xi_g^k|, |\xi_h^k|$ for~the~corresponding estimators.~In addition, for TR-SQP-STORM2, we use $|\xi_f^{k'}|$ to denote the batch size~of~the~set $\xi_f^{k'}$, which is only generated for re-estimating the objective value at the new trial point when the SOC step is performed (cf. Case 2 of Step 4 in Section \ref{sec:3.2}). We allow $(\xi_f^k, \xi_f^{k'}, \xi_g^k, \xi_h^k)$ to be dependent and their sizes are decided following \eqref{eq:batchsize}.~For AL-SSQP method, $|\xi_f^k|$ and $|\xi_g^k|$ are~\mbox{generated}~\mbox{following} the conclusions of Lemmas 2 and 3 in \cite{Na2022adaptive}.~Analogously, we generate $|\xi_f^k|$ and $|\xi_g^k|$~for~$\ell_2$-SSQP method as
\begin{equation*}
|\xi_f^k|\geq\frac{C_{func}\log\left(\frac{4}{p_f}\right)}{\min\left\{\left[\kappa_f\bar{\alpha}_k^2\left(\barg_k^T\Delta\bx_k-\barmu_k\|c_k\|\right)\right]^2,\bar{\epsilon}_k^2,1\right\}},\quad\quad |\xi_g^k|\geq\frac{C_{grad}\log\left(\frac{8d}{p_{grad}}\right)}{\min\left\{\kappa_{grad}^2\bar{\alpha}_k^2\|\bar{\nabla}\L_k\|^2,1\right\}},
\end{equation*}
where $C_{func}, C_{grad}$ are positive constants. We require all sample sizes to not exceed $10^4$.

For both AL-SSQP and $\ell_2$-SSQP methods, we follow the notation in \cite{Na2022adaptive} and~set~$\barmu_0=\bar{\epsilon}_0=1$, $\beta=0.3$, $\rho=1.2$, $\bar{\alpha}_0=\alpha_{\max}=1.5$, $\kappa_{grad}=0.05$, $\kappa_f=0.05$, $p_{grad}=p_f=0.1$,~and~$C_{grad}=C_{func}=5$. We set the Hessian matrix $\barH_k=I$ and solve all SQP subproblems exactly.

For our method (under both first- and second-order stationarity), we set $\Delta_0 = \barmu_0= \barepsilon_0 = 1$,~$\kappa_g = \kappa_h=0.05$, $p_f = p_g=p_h=0.9$, $C_f=C_g=C_h=5$, $\Delta_{\max}=5$, $\rho=1.2$, $\gamma=1.5$,~$\eta=0.4$,~and~$r=0.01$.
We apply \texttt{IPOPT} solver \citep{Waechter2005Implementation} to solve \eqref{eq:Sto_tangential_step} with $\kappa_{fcd}=1$. Since trust-region methods allow Hessian matrices to be indefinite, same as \cite{Fang2024Fully}, we consider four~different constructions of $\barH_k$ for first-order stationarity:

\begin{enumerate}[label=(\alph*),topsep=0pt]
\setlength\itemsep{0.0em}
\item Identity matrix (Id). This choice has been used in numerous existing SSQP literature due~to~the simplicity (see \cite{Berahas2021Sequential, Berahas2023Stochastic, Na2022adaptive,Na2023Inequality} and references therein).\quad\quad

\item Symmetric rank-one (SR1) update.  We initialize $\barH_{0}=I$ and, for $k\geq 1$, $\barH_k$ is updated as
\begin{equation*}
\barH_{k}=\barH_{k-1}+\frac{(\boldsymbol{y}_{k-1}-\barH_{k-1}\Delta\bx_{k-1})(\boldsymbol{y}_{k-1}-\barH_{k-1}\Delta\bx_{k-1})^T}{(\boldsymbol{y}_{k-1}-\barH_{k-1}\Delta\bx_{k-1})^T\Delta\bx_{k-1}}.
\end{equation*}
Here, $\boldsymbol{y}_{k-1}=\bar{\nabla}_{\bx}\L_{k}-\bar{\nabla}_{\bx}\L_{k-1}$ and $\Delta\bx_{k-1}=\bx_{k}-\bx_{k-1}$.~The quasi-Newton with SR1 update~can generate indefinite Hessian approximations and may converge faster to the true Hessian than BFGS in some scenarios \citep{Khalfan1993Theoretical}.
    
\item Estimated Hessian (EstH). As in the second-order stationarity, we estimate the Hessian matrix $\bar{\nabla}_{\bx}^2\L_k$ using a single sample and set $\barH_k=\bar{\nabla}_{\bx}^2\L_k$.

\item Averaged Hessian (AveH). We estimate the Hessian matrix $\bar{\nabla}_{\bx}^2\L_k$ using a single sample and set $\barH_k=\frac{1}{B}\sum_{i=k-B+1}^{k}\bar{\nabla}_{\bx}^2\L_i$ with $B=50$. This choice is motivated by \cite{Na2022Hessian}, which shows that averaging the Hessians helps stochastic Newton methods achieve faster~convergence. 

\end{enumerate}

\subsection{CUTEst set}

We implement 47 problems from the CUTEst test set.~All problems have a non-constant \mbox{objective},~only equality constraints, and dimension $d \leq 1000$. We employ two types of initializations: (i) the~initialization provided by the CUTEst package, and (ii) random initialization, where each entry~of~$\bx_0$~is~independently drawn from a Gaussian distribution $\N(0,100)$. For random initialization, all methods~start from the same initialization to ensure a fair comparison.

For objective values, gradients, and Hessians, we generate the estimates based on the true~deterministic quantities provided by the CUTEst package. Specifically, $F(\bx_k;\xi) \sim \N(f_k, \sigma^2)$,~$\nabla F(\bx_k;\xi) \sim \N(\nabla f_k, \sigma^2(I+\boldsymbol{1}\boldsymbol{1}^T))$, and $[\nabla^2 F(\bx_k;\xi)]_{i,j} = [\nabla^2 F(\bx_k;\xi)]_{j,i}\sim \N([\nabla^2 f_k]_{i,j},\sigma^2)$. Here, $\boldsymbol{1}$ denotes the $d$-dimensional all-one vector.
We consider four different noise levels $\sigma^2 \in \{10^{-8}, 10^{-4}, 10^{-2}, 10^{-1}\}$.~For each method on each problem and each noise~level, we perform 5 independent runs and report~the average of the KKT residuals.
The stopping criteria for TR-SQP-STORM, AL-SSQP, and $\ell_2$-SSQP are set as $\|\nabla\mathcal{L}_k\| \leq 10^{-4} \text{ OR } k \geq 10^5$, while the stopping criterion for TR-SQP-STORM2 is set as $\max\{\|\nabla\mathcal{L}_k\|, \tau_k^+\} \leq 10^{-4} \text{ OR } k \geq 10^5$.

\begin{figure}[t]
\centering
\includegraphics[width=0.48\textwidth]{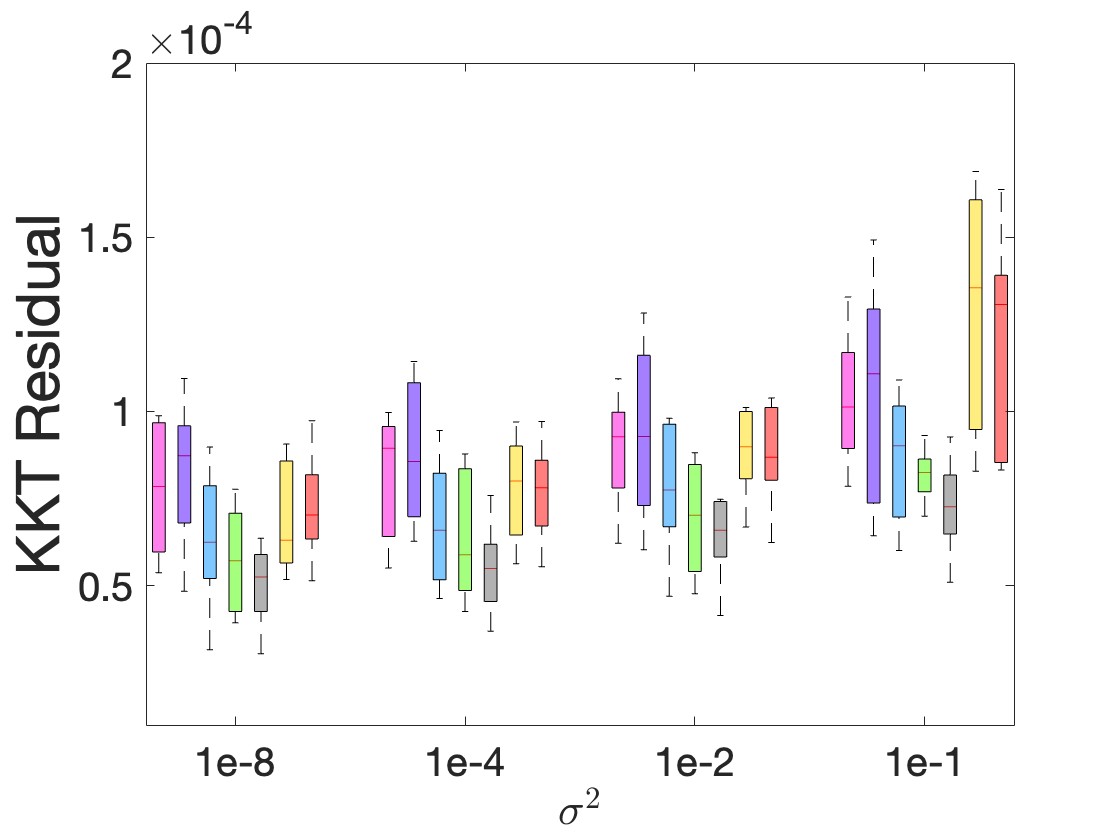}
\quad
\includegraphics[width=0.48\textwidth]{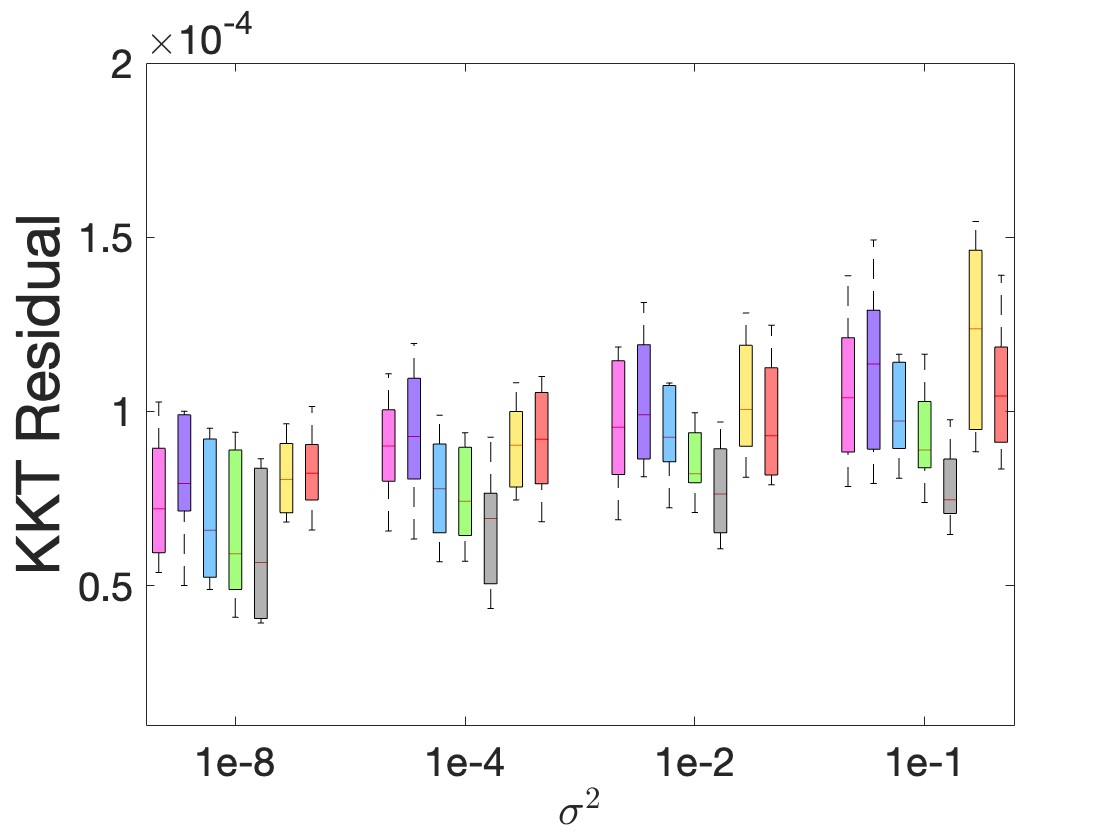}
\includegraphics[width=0.85\textwidth]{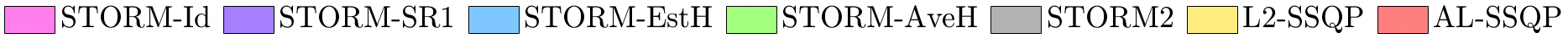}
\caption{KKT residual box plots over 47 CUTEst problems with given initialization (left) and~random initialization (right). Each panel has four different noise levels. For each noise level, the first~four boxes correspond to TR-SQP-STORM with different types of $\barH_k$; the fifth box corresponds~to~TR-SQP-STORM2; and the last two boxes correspond to $\ell_2$-SSQP and AL-SSQP, respectively.}
\label{fig:cutest}
\end{figure}

The results of the experiment are illustrated in Figure \ref{fig:cutest}.~From the figure, we observe that TR-SQP-STORM2 (i.e., our method with second-order stationarity) outperforms the other \mbox{methods} and its superior performance is robust across different noise levels and types of initializations.~This advantage is attributed to precise Hessian estimations, the ability to move along negative \mbox{curvatures},~and the computation of SOC steps. Only this method can guarantee to escape from saddle points. Furthermore, at low noise levels ($\sigma^2=10^{-8}$ or $10^{-4}$), line-search-based AL-SSQP and $\ell_2$-SSQP methods perform comparably to our trust-region methods. However, as noise levels increase, the performance of $\ell_2$-SSQP deteriorates rapidly, while AL-SSQP remains competitive though still inferior to our methods. 
In addition, among the four types of Hessians $\barH_k$ used in TR-SQP-STORM,~we~\mbox{observe}~that the SR1 update can lead to unstable performance.~It achieves~small~KKT~residuals at low~noise~\mbox{levels}~but~\mbox{performs}~well in some problems and poorly in others at high noise levels.~In contrast, the Hessians of EstH and~AveH consistently enhance the performance of TR-SQP-STORM across varying noise levels. Notably,~AveH demonstrates even better performance than EstH at high noise levels, as Hessian averaging generates more accurate Hessian estimates by aggregating samples.

\subsection{Logistic regression}
\begin{figure}[t!]
\centering
\subfigure[\texttt{covtype} dataset]{\includegraphics[width=0.48\textwidth]{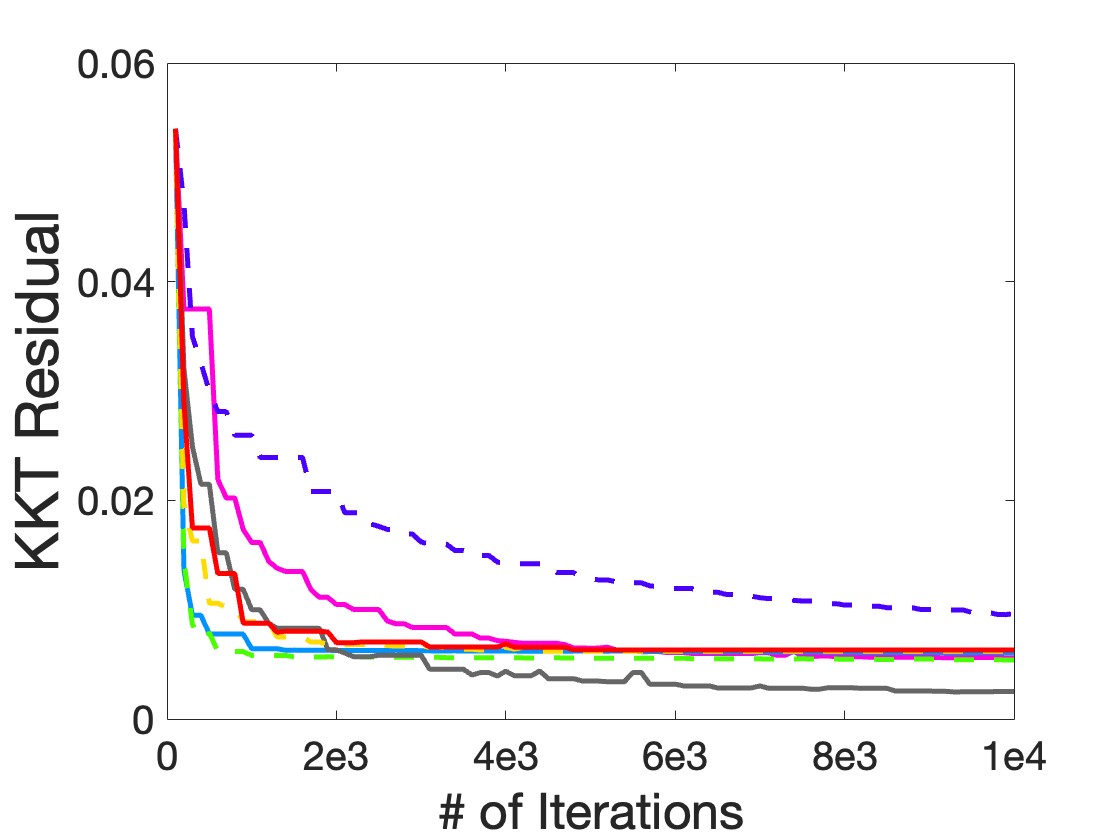}}
\quad
\subfigure[\texttt{shuttle} dataset]{\includegraphics[width=0.48\textwidth]{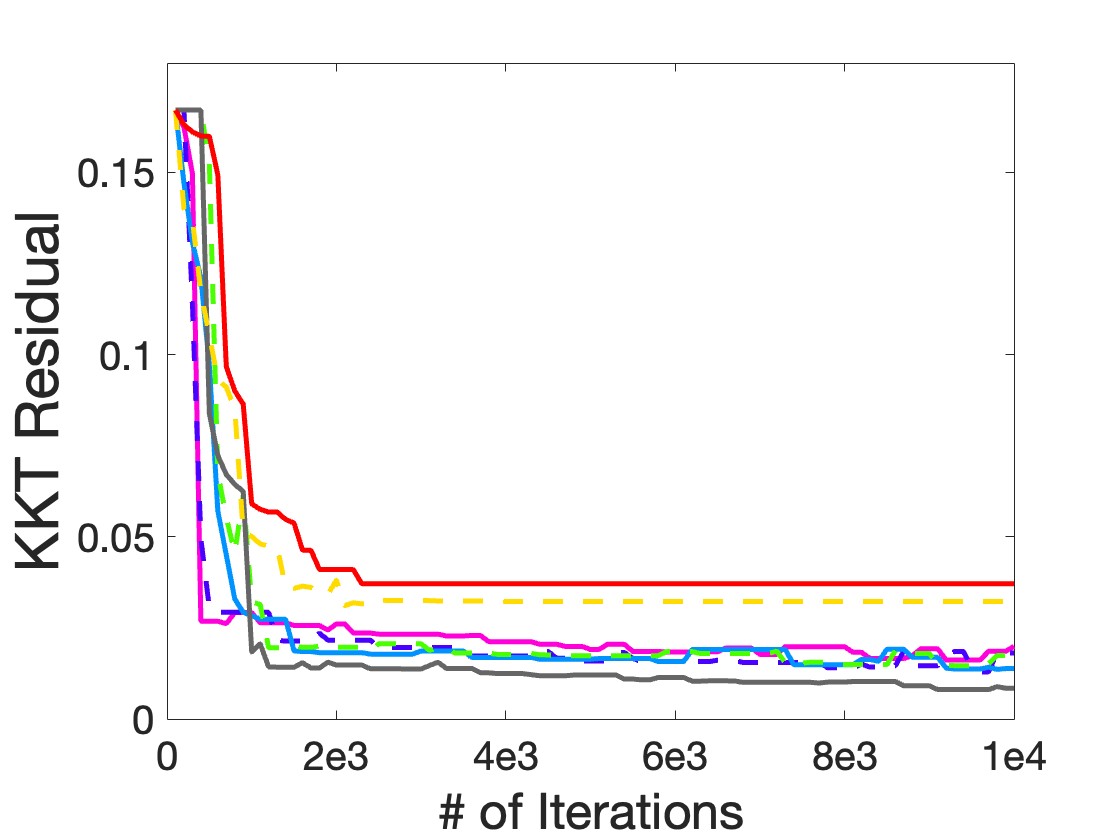}}
\quad
\subfigure[\texttt{normal} dataset]{\includegraphics[width=0.48\textwidth]{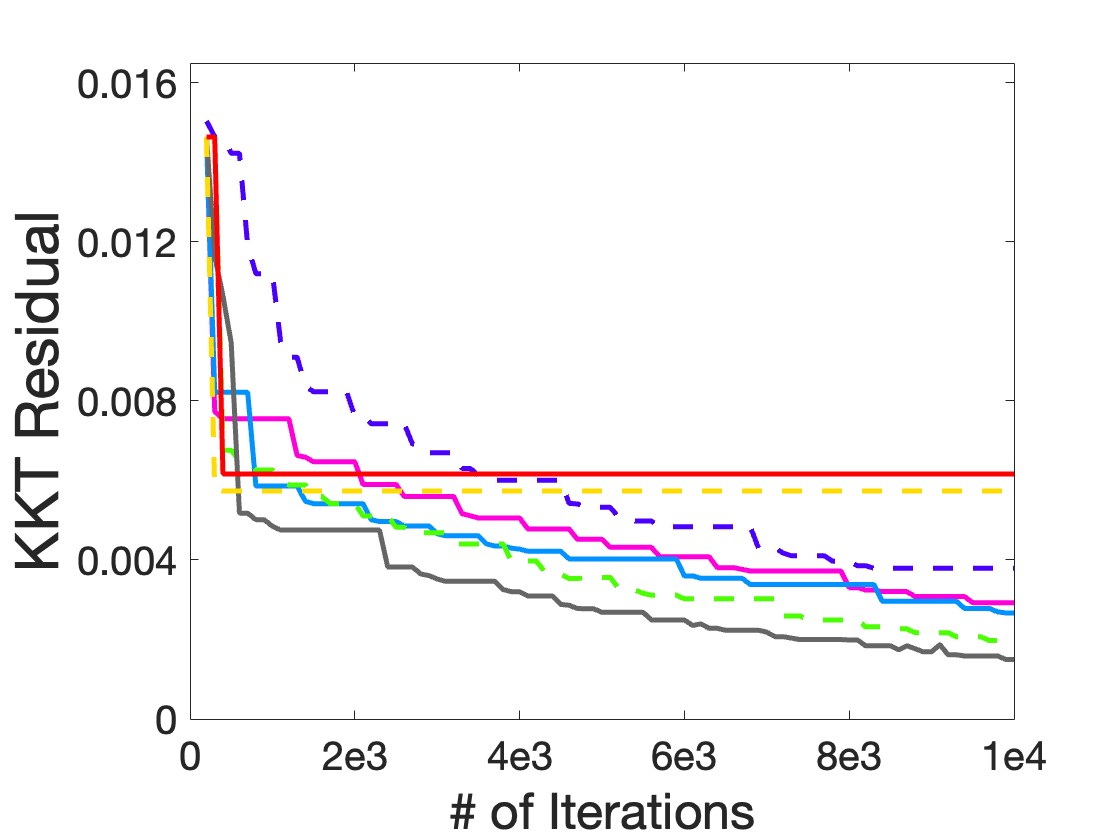}}
\quad
\subfigure[\texttt{exponential} dataset]{\includegraphics[width=0.48\textwidth]{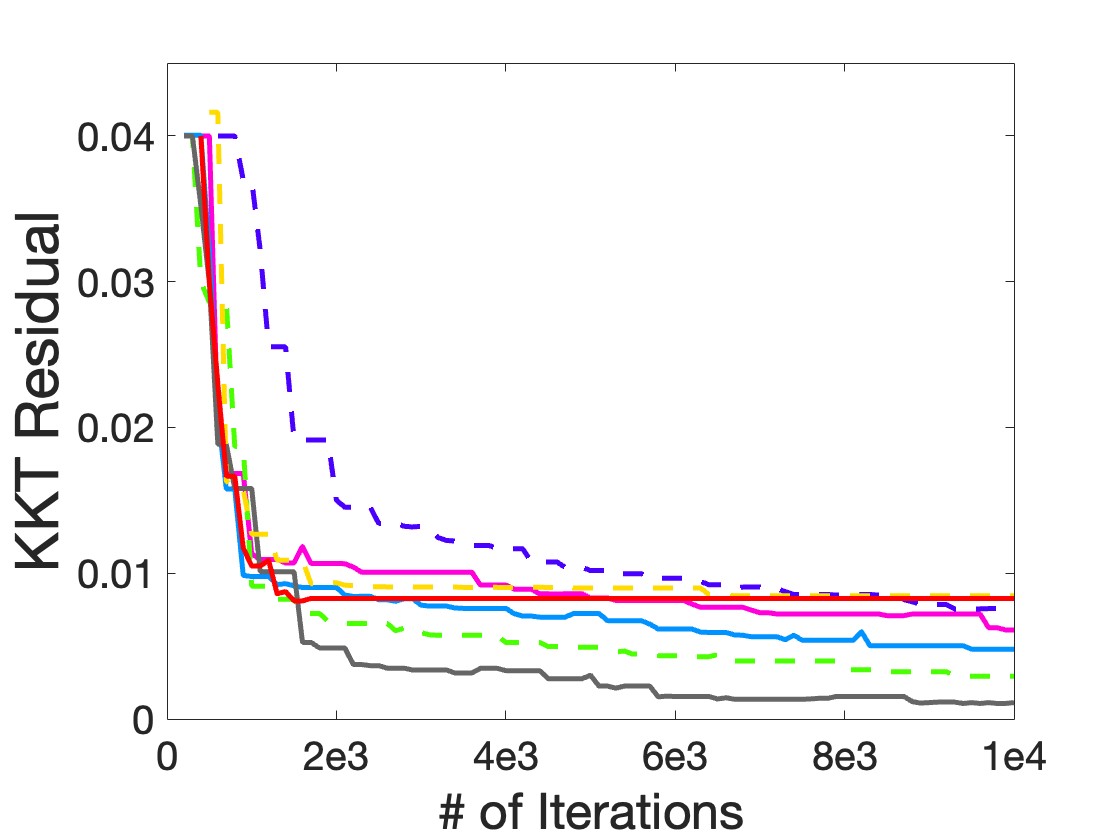}}
\quad
\includegraphics[width=0.85\textwidth]{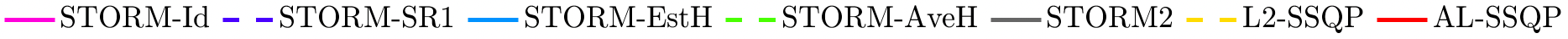}
\caption{Trajectories of KKT residuals of four datasets. Each panel corresponds to a dataset~and~includes seven lines representing the seven algorithms.}
\label{fig:logit}
\end{figure}

We consider an equality-constrained logistic regression problem of the form:
\begin{equation*}
\min_{\bx\in\mR^{d}}\;\; f(\bx)=\frac{1}{N}\sum_{i=1}^{N}\log\big(1+e^{-y_i(\boldsymbol{z}_i^T\bx)}\big),\quad\quad \text{s.t.} \quad A\bx=\boldsymbol{b},
\end{equation*}
where $\{(\boldsymbol{z}_i,y_i)\}_{i=1}^N$ are $N$ samples with features $\boldsymbol{z}_i\in\mR^d$ and labels $y_i\in\{-1,+1\}$. 
The constraint parameters $A\in\mR^{5\times d}$ and $\boldsymbol{b}\in\mR^5$ are generated for each problem with entries drawn independently~from a standard Gaussian distribution while ensuring that $A$ has full row rank.~We implement four datasets: \texttt{covtype} and \texttt{shuttle} from the UCI repository, and \texttt{normal} and \texttt{exponential} that are synthetic. 
For the \texttt{normal} and \texttt{exponential} datasets, we set $d=15$ and $N=6\times 10^4$,~equally~split~\mbox{between}~the~two classes. In the \texttt{normal} dataset, each entry of $\boldsymbol{z}_i$ is generated from $\N(0,1)$ if $y_i=1$ and $\N(5,1)$~if~$y_i=-1$. In the \texttt{exponential} dataset, each entry of $\boldsymbol{z}_i$ is generated from $\exp(1)$ if $y_i=1$ and $5+\exp(1)$ if $y_i=-1$. We set the initialization to a zero vector. 
For each algorithm and each dataset, we plot the trajectory of the average KKT residuals over five independent runs. The stopping criteria~for~TR-SQP-STORM, AL-SSQP, and $\ell_2$-SSQP are set as $ \|\nabla\mathcal{L}_k\|\leq 10^{-4}\text{ OR }k\geq 10^4$, while similarly,~the~stopping criterion for TR-SQP-STORM2 is set as $\max\{\|\nabla\mathcal{L}_k\|,\tau_k^+\}\leq 10^{-4}\text{ OR }k\geq 10^4$.

We present the results in Figure \ref{fig:logit}. From the figure, we observe that TR-SQP-STORM2~clearly~outperforms the other methods in three out of four datasets. Only in the \texttt{shuttle} dataset is its performance comparable to TR-SQP-STORM.
We also note that AL-SSQP and $\ell_2$-SSQP perform well on \texttt{covtype} but poorly on \texttt{shuttle} and \texttt{normal}. For the \texttt{exponential} dataset, the performance of AL-SSQP and $\ell_2$-SSQP is similar to that of TR-SQP-STORM with the Id and~SR1~\mbox{Hessian}~\mbox{updates},~both of which are inferior to the performance with EstH and AveH updates. 
Overall, among the four~types of Hessian matrices tested for TR-SQP-STORM, the averaged Hessian generally performs the best, followed by the estimated Hessian, while the SR1 update performs the worst. However, it~is~worth~noting that for the \texttt{shuttle} dataset, all four types of Hessians exhibit similar performance.

\subsection{Saddle-point problem}

To demonstrate the efficacy of TR-SQP-STORM2 in escaping saddle points compared to TR-SQP-STORM, AL-SSQP and $\ell_2$-SSQP methods, we consider the following saddle-point problem:
\begin{equation}\label{prob:saddle}
\min_{(x_1,x_2)} f(x_1,x_2) = 2x_1 + \frac{1}{2}x_2^2 \quad\quad \text{s.t.}\quad\quad  x_1^2 + x_2^2-1=0.
\end{equation}
We can check that Problem \eqref{prob:saddle} has two stationary points: a local minima at $(-1,0)$ and~a~\mbox{saddle}~point at $(1,0)$.
In this experiment, we initialize all methods randomly within a neighborhood of radius $0.01$ around the saddle point. Following the CUTEst experiment, we generate estimates of objective values, gradients, and Hessians based on their true deterministic quantities.~\mbox{Specifically}, we have~$F(\bx_k;\xi) \sim \N(f_k, \sigma^2)$, $\nabla F(\bx_k;\xi) \sim \N(\nabla f_k, \sigma^2(I+\boldsymbol{1}\boldsymbol{1}^T))$, and $[\nabla^2 F(\bx_k;\xi)]_{i,j} = [\nabla^2 F(\bx_k;\xi)]_{j,i}\sim \N([\nabla^2 f_k]_{i,j},\sigma^2)$. 
We consider four different noise levels $\sigma^2 \in \{10^{-8}, 10^{-4}, 10^{-2}, 10^{-1}\}$. For each method on each  noise level, we perform 5 independent runs and report the averaged trajectories of the KKT residuals and the smallest eigenvalue of the reduced Lagrangian Hessians. The stopping criteria for all methods~are set as $\max\{\|\nabla\mathcal{L}_k\|, \tau_k^+\} \leq 10^{-4} \text{ OR } k \geq 10^4$. 

\begin{figure}[t!]
\centering
\subfigure[$\sigma^2=10^{-8}$]{\includegraphics[width=0.243\textwidth]{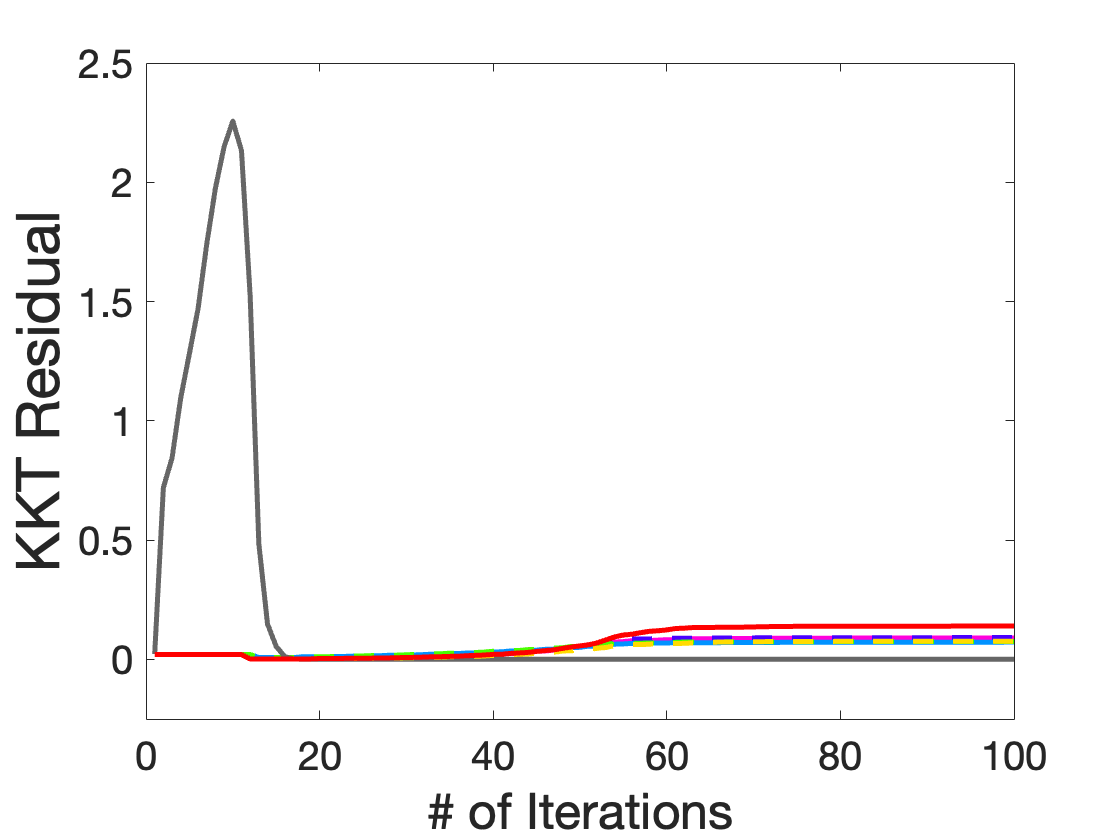}}
\subfigure[$\sigma^2=10^{-4}$]{\includegraphics[width=0.243\textwidth]{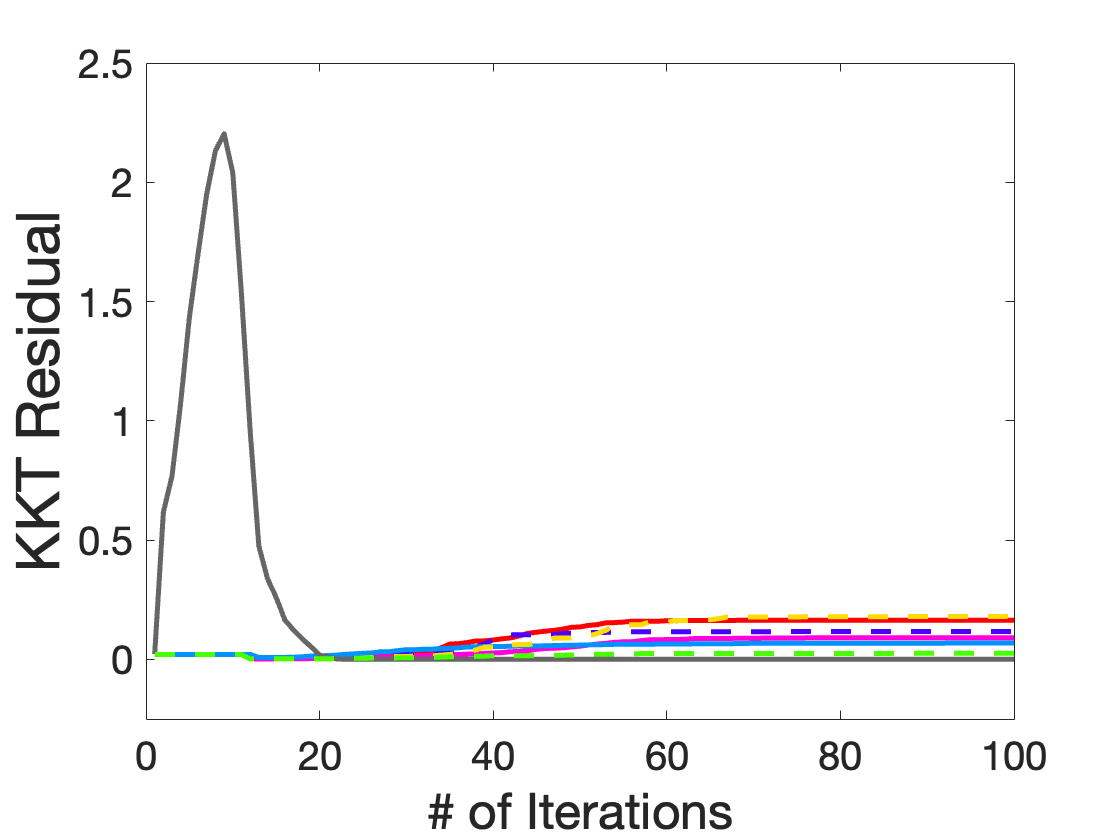}}
\subfigure[$\sigma^2=10^{-2}$]{\includegraphics[width=0.243\textwidth]{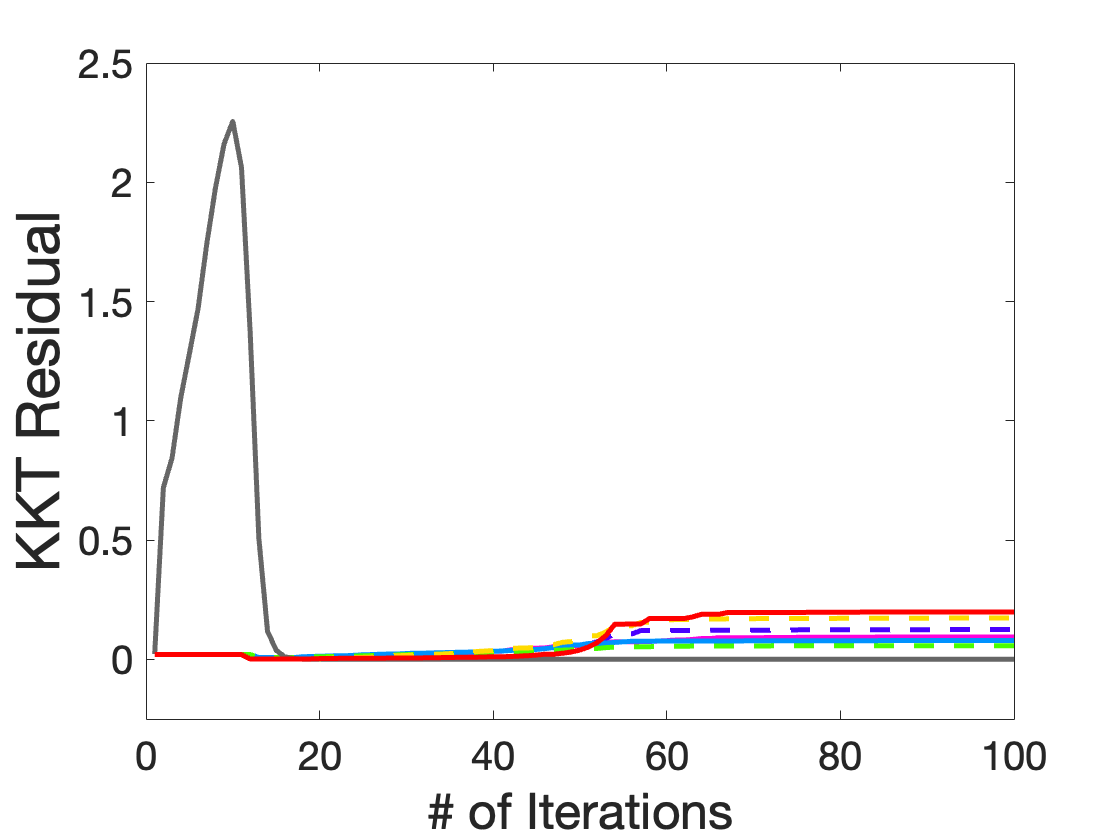}}
\subfigure[$\sigma^2=10^{-1}$]{\includegraphics[width=0.243\textwidth]{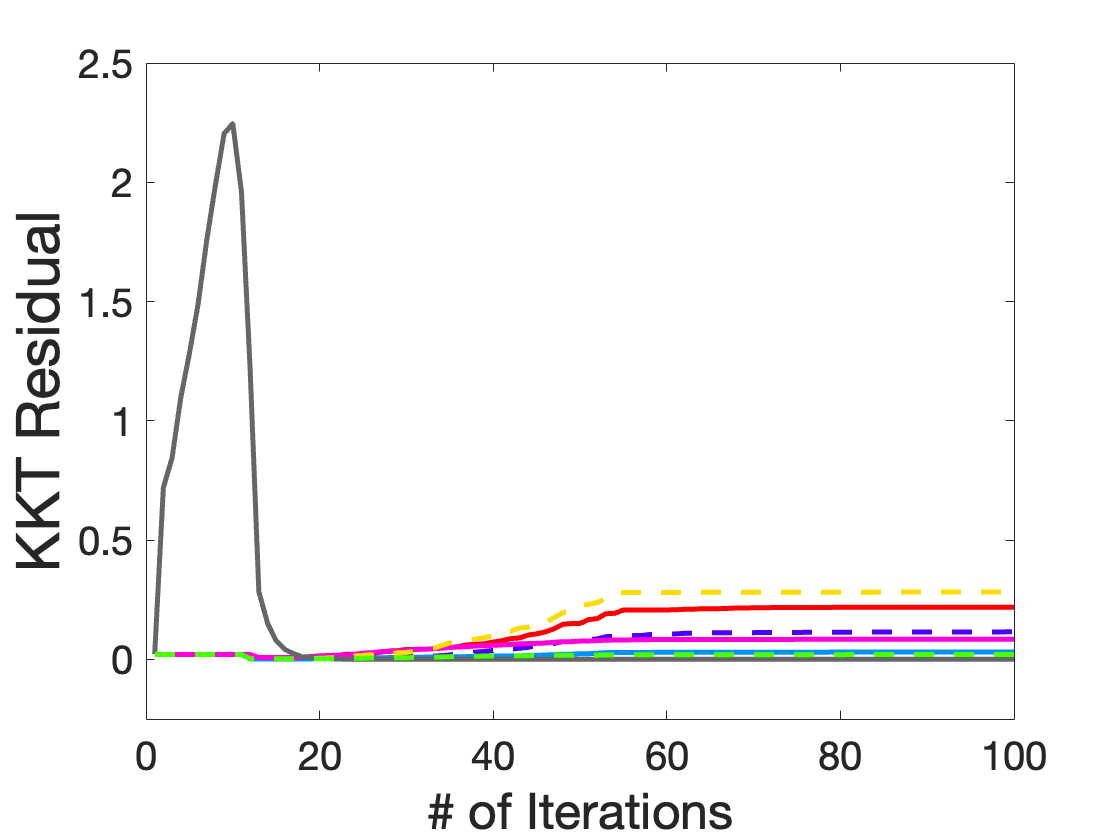}}
\subfigure[$\sigma^2=10^{-8}$]{\includegraphics[width=0.243\textwidth]{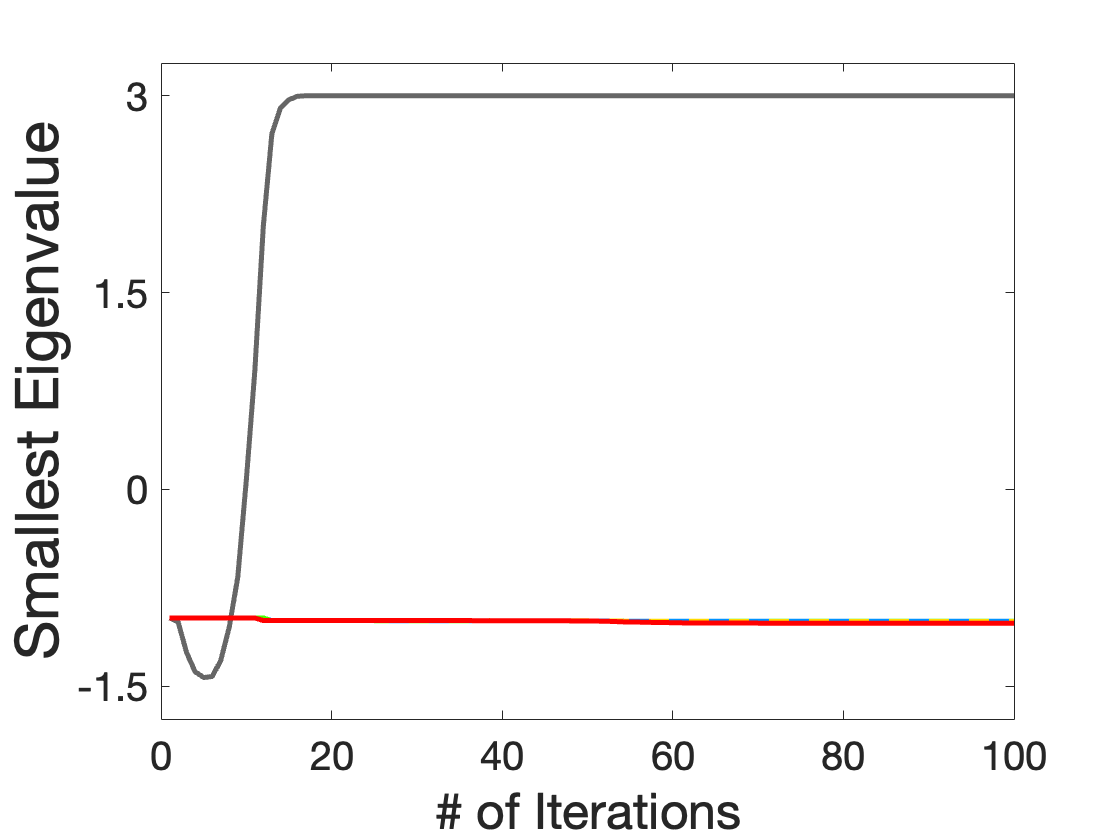}}
\subfigure[$\sigma^2=10^{-4}$]{\includegraphics[width=0.243\textwidth]{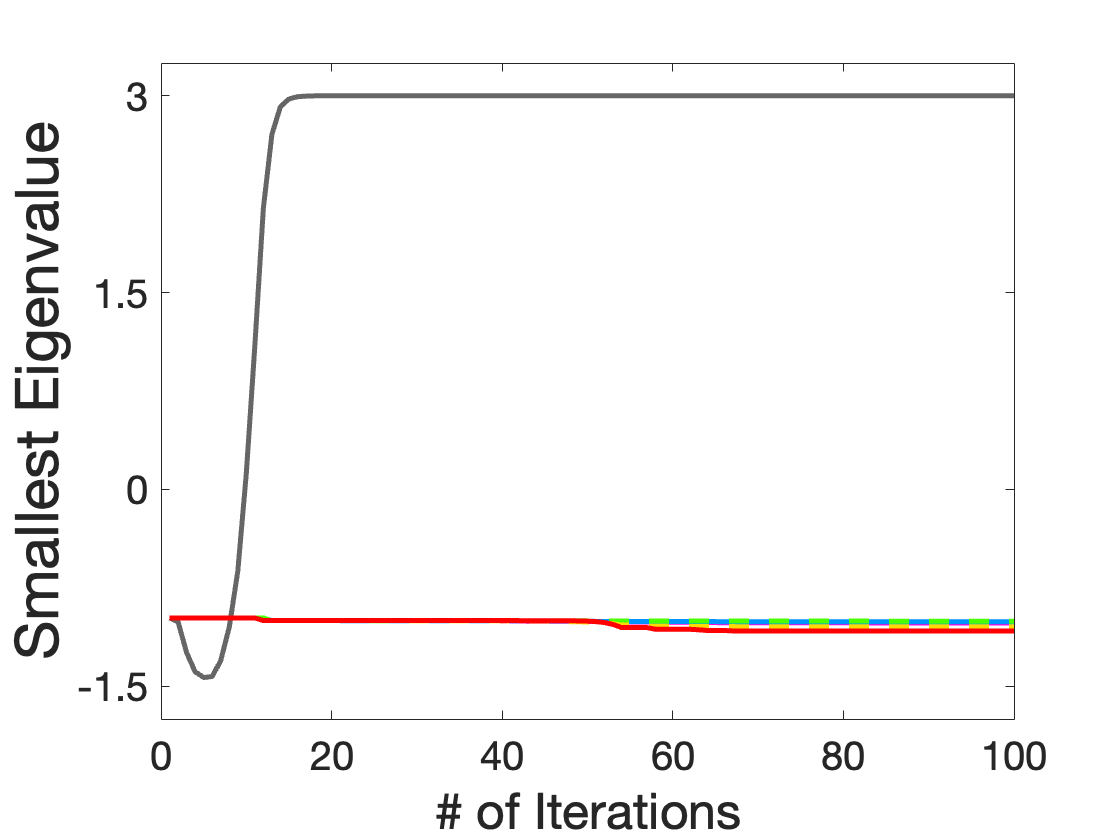}}
\subfigure[$\sigma^2=10^{-2}$]{\includegraphics[width=0.243\textwidth]{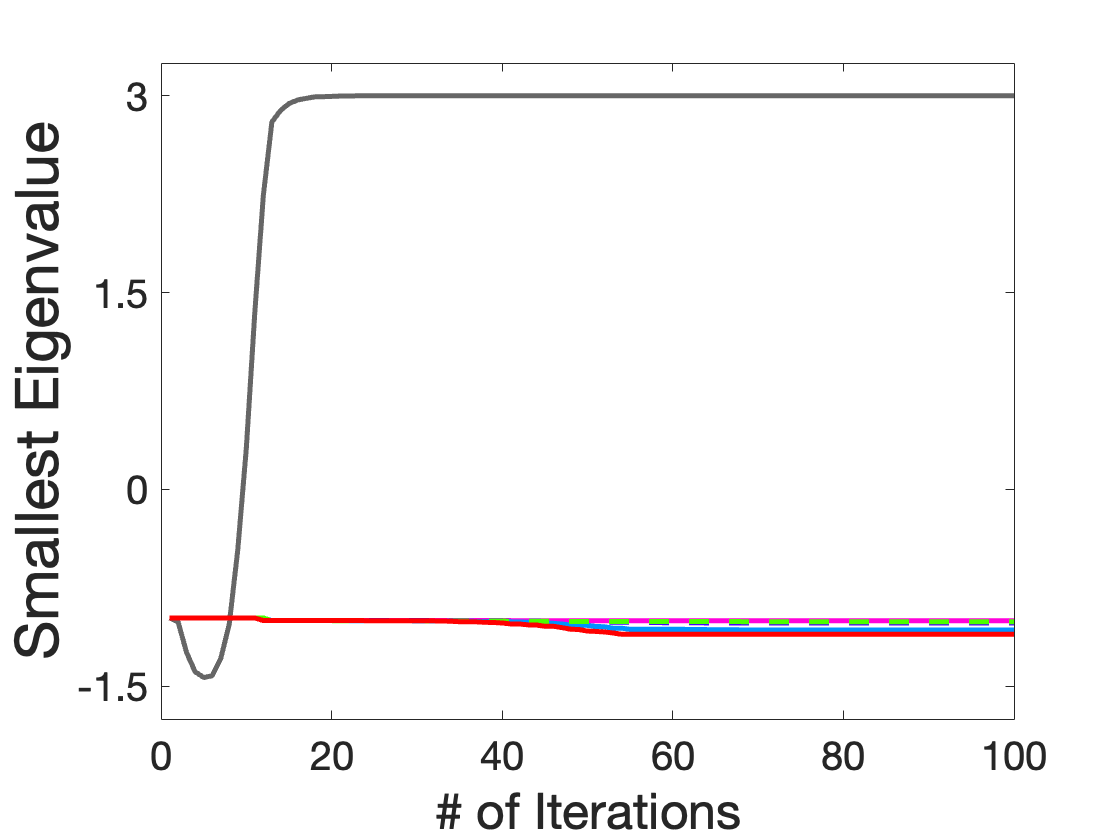}}
\subfigure[$\sigma^2=10^{-1}$]{\includegraphics[width=0.243\textwidth]{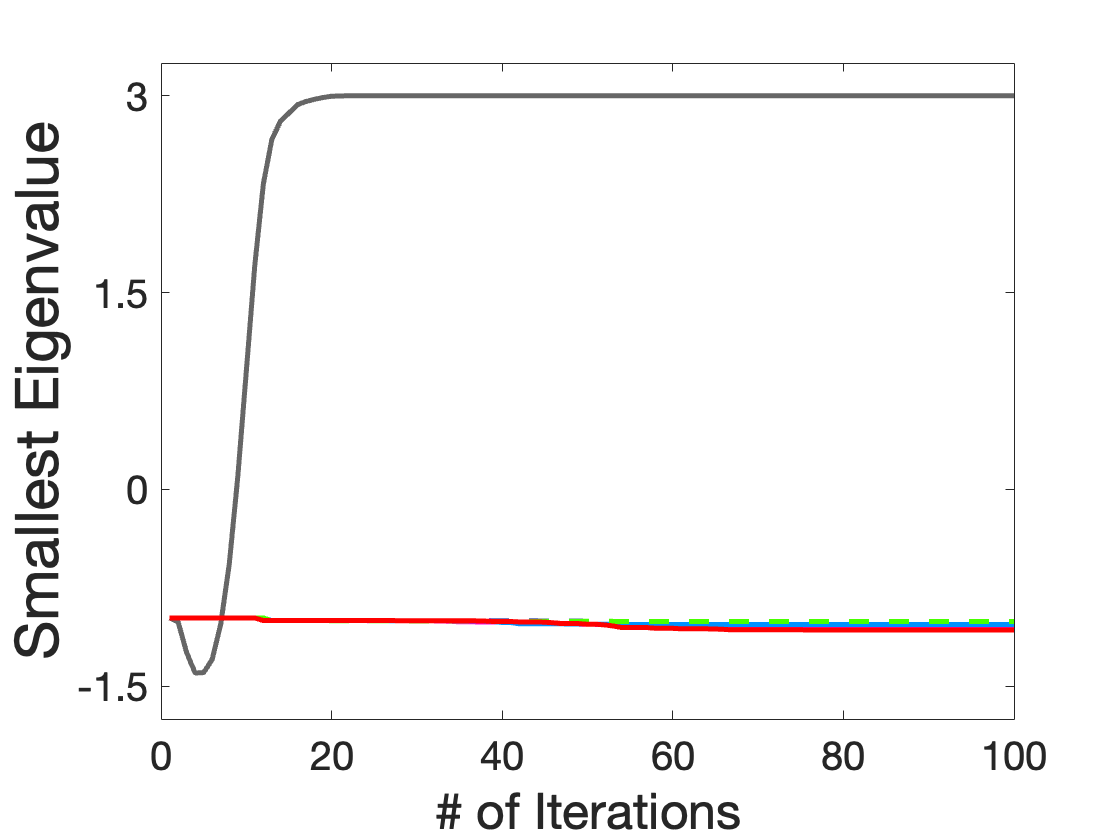}}
\includegraphics[width=0.85\textwidth]{Figures/label2.png}
\caption{Trajectories of the KKT residuals and the smallest eigenvalue of the reduced Lagrangian Hessians under four noise levels. The top four figures show the trajectories of the KKT residuals, while the bottom four figures show the trajectories of the smallest eigenvalues. Each figure corresponds~to~a noise level and includes seven lines representing the seven algorithms.}\label{fig:saddle}
\end{figure}

We present the trajectories of the KKT residuals in Figure \ref{fig:saddle}(a)-(d) and the trajectories~of~the~smallest eigenvalues in Figure \ref{fig:saddle}(e)-(h).
To better visualize the results, we plot only the first 100 iterations as both the KKT residuals and the smallest eigenvalues stabilize after this point.~From the two figures, we see that across all four noise levels, only TR-SQP-STORM2 successfully escapes the~\mbox{saddle}~point and converges to the local minima.
In contrast, for all other methods, the KKT residuals remain~relatively large and the smallest eigenvalues stay close to $-1$, which is precisely the negative curvature at the saddle point $(1,0)$. Thus, we conclude that the other methods are trapped near the saddle~point. Moreover, TR-SQP-STORM2 demonstrates a rapid escape, consistently terminating after around~20 iterations for different noise levels.

\section{Conclusion}\label{sec:6}

In this paper, we proposed a Trust-Region Sequential Quadratic Programming method called TR-SQP-STORM to find both first- and second-order stationary points for constrained stochastic problems. Our method utilizes a random model framework to represent the objective function.~At each iteration, a batch of samples is realized to estimate the objective quantities, with the batch size adaptively selected to ensure the estimators satisfy certain proper accuracy conditions with a fixed probability. We designed two types of trial steps, gradient steps and eigen steps, both of which are computed via a novel parameter-free decomposition of the step and the trust-region radius.
The gradient steps aim to reduce the KKT residuals to achieve first-order stationarity, while the eigen steps aim to explore the negative curvature of the reduced~\mbox{Lagrangian}~\mbox{Hessian}~to~achieve~\mbox{second-order}~\mbox{stationarity}. For the latter goal, we additionally computed second-order correction steps to overcome the~\mbox{potential}~Maratos effect, which occurs exculsively in constrained problems.~Under mild assumptions, we showed~global~almost sure convergence guarantees.~Numerical experiments on CUTEst benchmark problems,~constrained logistic regression problems, and saddle-point problems illustrate the promising performance of our method.

\bibliographystyle{my-plainnat}
\bibliography{ref}

\appendix
\numberwithin{equation}{section}
\numberwithin{theorem}{section}
\makeatletter
\def\@seccntformat#1{\appendixname\ \csname the#1\endcsname. \;}
\makeatother

\section{Proof of Lemma \ref{lemma: mu_stabilize}}\label{append:1}

It suffices to show that \eqref{eq:threshold_Predk} is satisfied as long as $\barmu_k$ exceeds a deterministic threshold independent of $k$. We divide our analysis into two cases, depending on whether the gradient~step~or~the~eigen~step~is~taken.

\noindent $\bullet$ \textbf{Case 1: gradient step is taken.} By the algorithm design, a gradient step is taken if~and~only~if~\eqref{eq:cri_step_comput} holds. Thus, we only need to show
\begin{equation}\label{nequ:6}
\text{Pred}_k \leq -\frac{\kappa_{fcd}}{2}\|\bar{\nabla}\L_k\|\min\left\{\Delta_k,\frac{\|\bar{\nabla}\L_k\|}{\|\barH_k\|}\right\}
\end{equation}
when $\barmu_k$ is sufficiently large. Since $\|c_k+G_k\Delta\bx_k\|-\|c_k\|=-\bargamma_k\|c_k\|$, we have
\begin{align}\label{eq:pred_part1}
\text{Pred}_k & \stackrel{\mathclap{\eqref{def:Pred_k}}}{=} \barg_k^T\Delta\bx_k+\frac{1}{2}\Delta\bx_k^T\barH_k\Delta\bx_k+\barmu_k(\|c_k+G_k\Delta\bx_k\|-\|c_k\|) \notag\\
& = (\barg_k+\bargamma_k\barH_k\bv_k)^TZ_k\bu_k+\frac{1}{2}\bu_k^TZ_k^T\barH_kZ_k\bu_k+\bargamma_k\barg_k^T\bv_k +\frac{1}{2}\bargamma_k^2\bv_k^T\barH_k\bv_k-\barmu_k\bargamma_k\|c_k\| \notag\\
& \leq -\frac{\kappa_{fcd}}{2}\|Z_k^T(\barg_k+\bargamma_k\barH_k\bv_k)\|\min\left\{\tilde{\Delta}_k,\frac{\|Z_k^T(\barg_k+\bargamma_k\barH_k\bv_k)\|}{\|\barH_k\|}\right\}+\bargamma_k\|\barg_k\|\|\bv_k\| \notag\\
&\quad +\frac{1}{2}\bargamma_k\|\barH_k\|\|\bv_k\|^2-\barmu_k\bargamma_k\|c_k\|,
\end{align}	
where the inequality is due to \eqref{eq:cauchy1} and $\bargamma_k\leq 1$. By $\|Z_k^T(\barg_k+\bargamma_k\barH_k\bv_k)\|\geq \|Z_k^T\barg_k\|- \bargamma_k\|\barH_k\|\|\bv_k\|$,~we~have
\begin{align}\label{eq:pred_part2}
\|Z_k^T& (\barg_k +\bargamma_k\barH_k\bv_k)\|\min\left\{\tilde{\Delta}_k,\frac{\|Z_k^T(\barg_k+\bargamma_k\barH_k\bv_k)\|}{\|\barH_k\|}\right\} \notag \\
& \geq \|Z_k^T\barg_k\|\min\left\{\tilde{\Delta}_k,\frac{\|Z_k^T\barg_k\|}{\|\barH_k\|}-\bargamma_k\|\bv_k\|\right\} -\bargamma_k\|\barH_k\|\|\bv_k\|\min\left\{\tilde{\Delta}_k,\frac{\|Z_k^T\barg_k\|}{\|\barH_k\|}-\bargamma_k\|\bv_k\|\right\} \notag \\
& \geq \|Z_k^T\barg_k\|\min\left\{\tilde{\Delta}_k,\frac{\|Z_k^T\barg_k\|}{\|\barH_k\|}\right\}-\bargamma_k\|Z_k^T\barg_k\|\|\bv_k\|-\bargamma_k\|\barH_k\|\|\bv_k\|\tilde{\Delta}_k.
\end{align}
Combining \eqref{eq:pred_part1}, \eqref{eq:pred_part2}, the fact $\|\bv_k\|\leq\frac{1}{\sqrt{\kappa_{1,G}}}\|c_k\|$, and Assumption \ref{ass:1-1}, we obtain
\begin{align*}
\text{Pred}_k & \leq -\frac{\kappa_{fcd}}{2}\|Z_k^T\barg_k\|\min\left\{\tilde{\Delta}_k,\frac{\|Z_k^T\barg_k\|}{\|\barH_k\|}\right\}+\frac{\kappa_{fcd}}{2\sqrt{\kappa_{1,G}}}\bargamma_k\|Z_k^T\barg_k\|\|c_k\|+\frac{\kappa_{fcd}}{2\sqrt{\kappa_{1,G}}}\bargamma_k\|\barH_k\|\|c_k\|\tilde{\Delta}_k\\
&\quad +\frac{1}{\sqrt{\kappa_{1,G}}}\bargamma_k\|\barg_k\|\|c_k\| +\frac{\kappa_c}{2\kappa_{1,G}}\bargamma_k\|\barH_k\|\|c_k\|-\barmu_k\bargamma_k\|c_k\|\\
& \leq -\frac{\kappa_{fcd}}{2}\|Z_k^T\barg_k\|\min\left\{\tilde{\Delta}_k,\frac{\|Z_k^T\barg_k\|}{\|\barH_k\|}\right\}+\left(\frac{\Delta_{\max}}{2\sqrt{\kappa_{1,G}}}+\frac{\kappa_c}{2\kappa_{1,G}}\right)\bargamma_k\|\barH_k\|\|c_k\|\\
&\quad +\frac{1.5}{\sqrt{\kappa_{1,G}}}\bargamma_k\|\barg_k\|\|c_k\| -\barmu_k\bargamma_k\|c_k\|,
\end{align*}
where the second inequality uses $\|Z_k^T\barg_k\|\leq\|\barg_k\|$, $\tilde{\Delta}_k\leq\Delta_{\max}$, and $\kappa_{fcd}\leq 1$. 
By Assumptions \ref{ass:1-1} and \ref{ass:bdd_err}, we know $\|\barg_k\| \leq \|\barg_k- g_k\| + \|g_k\| \leq M + \kappa_{\nabla f}$. Noting that $\|Z_k^T\barg_k\|=\|\bar{\nabla}_{\bx}\L_k\|$~and~$\|\barH_k\|\leq \kappa_B$ (cf. Assumption \ref{ass:bdd_err}), we further have
\begin{equation}\label{nequ:5}
\text{Pred}_k \leq -\frac{\kappa_{fcd}}{2}\|\bar{\nabla}_{\bx}\L_k\|\min\left\{\tilde{\Delta}_k,\frac{\|\bar{\nabla}_{\bx}\L_k\|}{\|\barH_k\|}\right\} + 
\cbr{\frac{\Delta_{\max}\kappa_B}{2\sqrt{\kappa_{1,G}}}+\frac{\kappa_c\kappa_B}{2\kappa_{1,G}} + \frac{1.5(M+\kappa_{\nabla f})}{\sqrt{\kappa_{1,G}}}  - \barmu_k
}\bargamma_k\|c_k\|.
\end{equation}
\noindent$\bullet$ \textbf{Case 1a: $\tilde{\Delta}_k\leq \|\bar{\nabla}_{\bx}\L_k\|/\|\barH_k\|$.} We note that
\begin{multline*}
-\frac{\kappa_{fcd}}{2}\|\bar{\nabla}_{\bx}\L_k\|\tilde{\Delta}_k -\frac{\kappa_{fcd}}{2}\|\bar{\nabla}_{\bx}\L_k\|\breve{\Delta}_k-\frac{\kappa_{fcd}}{2}\|c_k\|\Delta_k 
\leq -\frac{\kappa_{fcd}}{2}\|\bar{\nabla}_{\bx}\L_k\|\Delta_k-\frac{\kappa_{fcd}}{2}\|c_k\|\Delta_k \\
\leq -\frac{\kappa_{fcd}}{2}\|\bar{\nabla}\L_k\|\Delta_k \leq -\frac{\kappa_{fcd}}{2}\|\bar{\nabla}\L_k\|\min\left\{\Delta_k,\frac{\|\bar{\nabla}\L_k\|}{\|\barH_k\|}\right\}.
\end{multline*}
Therefore, we know from \eqref{nequ:5} and the above display that \eqref{nequ:6} holds as long as
\begin{equation}\label{mu:eq1}
\barmu_k\bargamma_k\|c_k\| \geq \frac{\kappa_{fcd}}{2}\|\bar{\nabla}_{\bx}\L_k\|\breve{\Delta}_k+\frac{\kappa_{fcd}}{2}\|c_k\|\Delta_k + \cbr{\frac{\Delta_{\max}\kappa_B}{2\sqrt{\kappa_{1,G}}}+\frac{\kappa_c\kappa_B}{2\kappa_{1,G}} + \frac{1.5(M+\kappa_{\nabla f})}{\sqrt{\kappa_{1,G}}} }\bargamma_k\|c_k\|.
\end{equation}
When $\bargamma_k=1$, \eqref{mu:eq1} holds as long as
\begin{equation}\label{mu:eq2}
\barmu_k \geq \frac{\kappa_{fcd}\|\bar{\nabla}_{\bx}\L_k\|}{2\|c_k\|}\breve{\Delta}_k+\frac{\kappa_{fcd}}{2}\Delta_k 
+ \frac{\Delta_{\max}\kappa_B}{2\sqrt{\kappa_{1,G}}}+\frac{\kappa_c\kappa_B}{2\kappa_{1,G}} +\frac{1.5(M+\kappa_{\nabla f})}{\sqrt{\kappa_{1,G}}}.
\end{equation}
Noting that
\begin{equation*}
\breve{\Delta}_k  \stackrel{\eqref{eq:breve and tilde_delta_k}}{=} \frac{\|c_k^{RS}\|}{\|\bar{\nabla}\L_k^{RS}\|}\Delta_k \stackrel{\eqref{nequ:1}}{=} \frac{\|G_k\|^{-1}\|c_k\|}{\|(\|\barH_k\|^{-1}\bar{\nabla}_{\bx}\L_k,\|G_k\|^{-1}c_k)\|}\Delta_k\leq \frac{\|G_k\|^{-1}\|c_k\|}{\|\barH_k\|^{-1}\|\bar{\nabla}_{\bx}\L_k\|}\Delta_k,
\end{equation*}
we see \eqref{mu:eq2} is satisfied if
\begin{equation*}
\barmu_k \geq \frac{\kappa_{fcd}}{2}\frac{\|\barH_k\|}{\|G_k\|}\Delta_k+\frac{\kappa_{fcd}}{2}\Delta_k
+ \frac{\Delta_{\max}\kappa_B}{2\sqrt{\kappa_{1,G}}}+\frac{\kappa_c\kappa_B}{2\kappa_{1,G}} +\frac{1.5(M+\kappa_{\nabla f})}{\sqrt{\kappa_{1,G}}}.
\end{equation*}
Combining the above display with $\Delta_k\leq \Delta_{\max}$, $\|G_k\|\geq \sqrt{\kappa_{1,G}}$, $\|\barH_k\|\leq \kappa_B$, and $\kappa_{fcd}\leq 1$, we know \eqref{nequ:6}~holds~if
\begin{equation*}
\barmu_k \geq \rbr{\frac{\kappa_B}{\sqrt{\kappa_{1,G}}}+0.5}\Delta_{\max} + \frac{\kappa_c\kappa_B}{2\kappa_{1,G}} +\frac{1.5(M+\kappa_{\nabla f})}{\sqrt{\kappa_{1,G}}} \eqqcolon \hat{\mu}_1.
\end{equation*}
On the other hand, when $\bargamma_k=\breve{\Delta}_k/\|\bv_k\|$, \eqref{mu:eq1} holds provided that
\begin{align}\label{mu:eq4}
\barmu_k &\geq \frac{\kappa_{fcd}}{2}\frac{\|\bar{\nabla}_{\bx}\L_k\|\|\bv_k\|}{\|c_k\|}+\frac{\kappa_{fcd}}{2}\frac{\Delta_k\|\bv_k\|}{\breve{\Delta}_k}
+ \frac{\Delta_{\max}\kappa_B}{2\sqrt{\kappa_{1,G}}}+\frac{\kappa_c\kappa_B}{2\kappa_{1,G}} + \frac{1.5(M+\kappa_{\nabla f})}{\sqrt{\kappa_{1,G}}} \nonumber \\
& = \frac{\kappa_{fcd}}{2}\frac{\|\bar{\nabla}_{\bx}\L_k\|\|\bv_k\|}{\|c_k\|}+\frac{\kappa_{fcd}}{2}\frac{\|G_k\|\|\bv_k\|\|\bar{\nabla}\L_k^{RS}\|}{\|c_k\| }
+ \frac{\Delta_{\max}\kappa_B}{2\sqrt{\kappa_{1,G}}}+\frac{\kappa_c\kappa_B}{2\kappa_{1,G}} + \frac{1.5(M+\kappa_{\nabla f})}{\sqrt{\kappa_{1,G}}}.
\end{align}
Since $\kappa_{fcd}\leq 1$, $\|\bv_k\|\leq \|c_k\|/\sqrt{\kappa_{1,G}}$, $ \|\bar{\nabla}_{\bx}\L_k\| \leq \|\barg_k\| \leq M+\kappa_{\nabla f}$, and
\begin{align*}
\|\bar{\nabla}\L_k^{RS}\|\leq \max\{\|\barH_k\|^{-1},\|G_k\|^{-1}\}\|\bar{\nabla}\L_k\| & \leq \max\left\{\frac{1}{\kappa_B},\frac{1}{\sqrt{\kappa_{1,G}}}\right\}(\|\bar{\nabla}_{\bx}\L_k\|+\|c_k\|)\\
&\leq \max\left\{\frac{1}{\kappa_B},\frac{1}{\sqrt{\kappa_{1,G}}}\right\}(M + \kappa_{\nabla f} + \kappa_c ),
\end{align*}
we know \eqref{mu:eq4} is implied by 
\begin{equation*}
\barmu_k \geq  \frac{2(M+\kappa_{\nabla f})}{\sqrt{\kappa_{1,G}}} + \frac{\sqrt{\kappa_{2,G}}}{2\sqrt{\kappa_{1,G}} }\max\left\{\frac{1}{\kappa_B},\frac{1}{\sqrt{\kappa_{1,G}}}\right\}(M+\kappa_{\nabla f}+\kappa_c)
+ \frac{\Delta_{\max}\kappa_B}{2\sqrt{\kappa_{1,G}}}+\frac{\kappa_c\kappa_B}{2\kappa_{1,G}} \eqqcolon \hat{\mu}_2.
\end{equation*}
\noindent$\bullet$ \textbf{Case 1b: $\tilde{\Delta}_k>\|\bar{\nabla}_{\bx}\L_k\|/\|\barH_k\|$.} We note that
\begin{equation*}
-\frac{\kappa_{fcd}}{2}\frac{\|\bar{\nabla}_{\bx}\L_k\|^2}{\|\barH_k\|}-\frac{\kappa_{fcd}}{2}\frac{\|c_k\|^2}{\|\barH_k\|}  = -\frac{\kappa_{fcd}}{2}\frac{\|\bar{\nabla}\L_k\|^2}{\|\barH_k\|}  \leq -\frac{\kappa_{fcd}}{2}\|\bar{\nabla}\L_k\|\min\left\{\Delta_k,\frac{\|\bar{\nabla}\L_k\|}{\|\barH_k\|}\right\}.
\end{equation*}
Thus, \eqref{nequ:5} suggests that \eqref{nequ:6} holds as long as
\begin{align}\label{nequ:7}
\barmu_k\bargamma_k\|c_k\| & \geq \frac{\kappa_{fcd}}{2}\frac{\|c_k\|^2}{\|\barH_k\|}
+ \cbr{ \frac{\Delta_{\max}\kappa_B}{2\sqrt{\kappa_{1,G}}}+\frac{\kappa_c\kappa_B}{2\kappa_{1,G}} + \frac{1.5(M+\kappa_{\nabla f})}{\sqrt{\kappa_{1,G}}} }\bargamma_k\|c_k\| \nonumber\\
\Longleftrightarrow \barmu_k & \geq \frac{\kappa_{fcd}}{2}\frac{\|c_k\|}{\bargamma_k\|\barH_k\|}
+ \frac{\Delta_{\max}\kappa_B}{2\sqrt{\kappa_{1,G}}}+\frac{\kappa_c\kappa_B}{2\kappa_{1,G}} + \frac{1.5(M+\kappa_{\nabla f})}{\sqrt{\kappa_{1,G}}}.
\end{align}
By \eqref{eq:breve and tilde_delta_k}, $\tilde{\Delta}_k>\|\bar{\nabla}_{\bx}\L_k\|/\|\barH_k\| = \|\bar{\nabla}_{\bx}\L_k^{RS}\|$ implies $\Delta_k>\|\bar{\nabla}\L_k^{RS}\|$. Using $\breve{\Delta}_k=\|G_k\|^{-1}\|c_k\|/\|\bar{\nabla}\L_k^{RS}\|\cdot\Delta_k$ and $\|\bv_k\|\leq \|c_k\|/\sqrt{\kappa_{1,G}}$, we have
\begin{multline*}
\bargamma_k=\min\left\{\frac{\breve{\Delta}_k}{\|\bv_k\|},1\right\}=\min\left\{\frac{\|G_k\|^{-1}\|c_k\|\Delta_k}{\|\bar{\nabla}\L_k^{RS}\|\|\bv_k\|},1\right\} \\
\geq \min\left\{\frac{\|c_k\|}{\|G_k\|\|\bv_k\|},1\right\}\geq \min\left\{\sqrt{\frac{\kappa_{1,G}}{\kappa_{2,G}}},1\right\}=\sqrt{\frac{\kappa_{1,G}}{\kappa_{2,G}}}.
\end{multline*}
Thus, with $\kappa_{fcd}\leq 1$, \eqref{nequ:7} (and hence \eqref{nequ:6}) is implied by
\begin{equation}\label{mu:eq9}
\barmu_k \geq \frac{\kappa_c}{2\kappa_B}\sqrt{\frac{\kappa_{2,G}}{\kappa_{1,G}}}
+ \frac{\Delta_{\max}\kappa_B}{2\sqrt{\kappa_{1,G}}}+\frac{\kappa_c\kappa_B}{2\kappa_{1,G}} + \frac{1.5(M+\kappa_{\nabla f})}{\sqrt{\kappa_{1,G}}} \eqqcolon \hat{\mu}_3.
\end{equation}
\noindent$\bullet$ \textbf{Case 2: eigen step is taken.} We only need to show
\begin{equation}\label{nequ:8}
\text{Pred}_k \leq -\frac{\kappa_{fcd}}{2}\bartau_k^+\Delta_k^2-\frac{\kappa_{fcd}}{2}\bartau_k^+\|c_k\|\Delta_k
\end{equation}
when $\barmu_k$ is sufficiently large. By the definition of the eigen step, we have
\begin{align*}	
\text{Pred}_k & \stackrel{\mathclap{\eqref{def:Pred_k}}}{=} \barg_k^T\Delta\bx_k+\frac{1}{2}\Delta\bx_k^T\barH_k\Delta\bx_k+\barmu_k(\|c_k+G_k\Delta\bx_k\|-\|c_k\|)\\
& = (\barg_k+\bargamma_k\barH_k\bv_k)^TZ_k\bu_k+\frac{1}{2}\bu_k^TZ_k^T\barH_kZ_k\bu_k+\bargamma_k\barg_k^T\bv_k +\frac{1}{2}\bargamma_k^2\bv_k^T\barH_k\bv_k-\barmu_k\bargamma_k\|c_k\|\\		
& \stackrel{\mathclap{\eqref{eq:reduction_eigen_step}}}{\leq} -\frac{\kappa_{fcd}}{2}\bartau_k^+\tilde{\Delta}_k^2+\bargamma_k\|\barg_k\|\|\bv_k\| +\frac{1}{2}\bargamma_k\|\barH_k\|\|\bv_k\|^2-\barmu_k\bargamma_k\|c_k\|.
\end{align*}
By Assumptions \ref{ass:1-1}, \ref{ass:bdd_err} and $\|\bv_k\|\leq \|c_k\|/\sqrt{\kappa_{1,G}}$, we further have
\begin{align*}
\text{Pred}_k & \leq -\frac{\kappa_{fcd}}{2}\bartau_k^+\tilde{\Delta}_k^2 
+\frac{M+\kappa_{\nabla f}}{\sqrt{\kappa_{1,G}}}\bargamma_k\|c_k\| + \frac{\kappa_c\kappa_B}{2\kappa_{1,G}}\bargamma_k\|c_k\| -\barmu_k\bargamma_k\|c_k\|\\
& = -\frac{\kappa_{fcd}}{2}\bartau_k^+\Delta_k^2 -\frac{\kappa_{fcd}}{2}\bartau_k^+\|c_k\|\Delta_k +\frac{\kappa_{fcd}}{2}\bartau_k^+\breve{\Delta}_k^2 +\frac{\kappa_{fcd}}{2}\bartau_k^+\|c_k\|\Delta_k \\
& \quad + \cbr{\frac{M+\kappa_{\nabla f}}{\sqrt{\kappa_{1,G}}} + \frac{\kappa_c\kappa_B}{2\kappa_{1,G}} - \barmu_k} \bargamma_k\|c_k\|. 
\end{align*}
Thus, \eqref{nequ:8} holds if
\begin{align}\label{eq:merit_4}
\barmu_k \bargamma_k & \|c_k\|  \geq \frac{\kappa_{fcd}}{2}\bartau_k^+\breve{\Delta}_k^2 + \frac{\kappa_{fcd}}{2}\bartau_k^+\|c_k\|\Delta_k + \cbr{\frac{M+\kappa_{\nabla f}}{\sqrt{\kappa_{1,G}}} + \frac{\kappa_c\kappa_B}{2\kappa_{1,G}}} \bargamma_k\|c_k\| \nonumber\\
& \stackrel{\mathclap{\eqref{eq:breve and tilde_delta_k_2}}}{=} 
\frac{\kappa_{fcd}}{2}\frac{\bartau_k^+ \|c_k^{RS}\|^2}{(\bartau_k^{RS+})^2+\|c_k^{RS}\|^2}\Delta_k^2 + \frac{\kappa_{fcd}}{2}\bartau_k^+\|c_k\|\Delta_k + \cbr{\frac{M+\kappa_{\nabla f}}{\sqrt{\kappa_{1,G}}} + \frac{\kappa_c\kappa_B}{2\kappa_{1,G}}} \bargamma_k\|c_k\|.
\end{align}
When $\bargamma_k=1$, by Young's inequality, $\bartau_k^+\leq\|\barH_k\|\leq\kappa_B$, and $\Delta_k\leq\Delta_{\max}$, we know \eqref{eq:merit_4} is~implied~by
\begin{align*}
\barmu_k & \geq 
\frac{\kappa_{fcd}}{4}\frac{\bartau_k^+\|c_k^{RS}\|}{\bartau_k^{RS+}\|c_k\|}\Delta_{\max}^2 
+ \frac{\kappa_{fcd}}{2}\kappa_B\Delta_{\max}
+ \frac{M+\kappa_{\nabla f}}{\sqrt{\kappa_{1,G}}} + \frac{\kappa_c\kappa_B}{2\kappa_{1,G}}\\
\Longleftarrow \barmu_k & \geq 
\frac{\kappa_{fcd}}{4}\frac{\kappa_B}{\sqrt{\kappa_{1,G}}}\Delta_{\max}^2 
+ \frac{\kappa_{fcd}}{2}\kappa_B\Delta_{\max}
+ \frac{M+\kappa_{\nabla f}}{\sqrt{\kappa_{1,G}}} + \frac{\kappa_c\kappa_B}{2\kappa_{1,G}} \\
\Longleftarrow \barmu_k & \geq 
\frac{\kappa_B}{4\sqrt{\kappa_{1,G}}}\Delta_{\max}^2 
+ \frac{\kappa_B\Delta_{\max}}{2}
+ \frac{M+\kappa_{\nabla f}}{\sqrt{\kappa_{1,G}}} + \frac{\kappa_c\kappa_B}{2\kappa_{1,G}} 
\eqqcolon \hat{\mu}_4.
\end{align*}
When $\bargamma_k<1$, without loss of generality, we suppose $\|c_k\|\neq 0$ (otherwise \eqref{eq:merit_4} is trivial).~From~\eqref{eq:breve and tilde_delta_k_2}~and $\bargamma_k\|\bv_k\|=\breve{\Delta}_k$, we have
\begin{equation*}
\Delta_k=\frac{\bargamma_k\|\bv_k\|\|G_k\|}{\|c_k\|}\|(\bartau_k^{RS+},c_k^{RS})\|.
\end{equation*}
Thus, \eqref{eq:merit_4} is equivalent to
\begin{equation*}
\barmu_k \bargamma_k\|c_k\| \geq \frac{\kappa_{fcd}}{2}\bartau_k^+\bargamma_k^2\|\bv_k\|^2 + \frac{\kappa_{fcd}}{2}\bartau_k^+\bargamma_k\|\bv_k\|\|G_k\|\|(\bartau_k^{RS+},c_k^{RS})\| + \cbr{\frac{M+\kappa_{\nabla f}}{\sqrt{\kappa_{1,G}}} + \frac{\kappa_c\kappa_B}{2\kappa_{1,G}}} \bargamma_k\|c_k\|.
\end{equation*}
Since $\max\{\bargamma_k,\kappa_{fcd}\}\leq 1$, we only need $\barmu_k$ to satisfy
\begin{equation*}
\barmu_k \geq\frac{\bartau_k^+\|\bv_k\|^2}{2\|c_k\|} + \frac{\bartau_k^+\|\bv_k\|}{2\|c_k\|}   \|G_k\|\|(\bartau_k^{RS+},c_k^{RS})\| + \frac{M+\kappa_{\nabla f}}{\sqrt{\kappa_{1,G}}} + \frac{\kappa_c\kappa_B}{2\kappa_{1,G}}.
\end{equation*}
Using $\bartau_k^+\leq \|\barH_k\|\leq \kappa_B$ and $\|\bv_k\|\leq \|c_k\|/\sqrt{\kappa_{1,G}}$, and applying Assumptions \ref{ass:1-1} and \ref{ass:bdd_err}, we know the above display can be further implied by 
\begin{equation*}
\barmu_k \geq \frac{\kappa_c\kappa_B}{\kappa_{1,G}} + \frac{\kappa_B\sqrt{\kappa_{2,G}}}{2\sqrt{\kappa_{1,G}}}\left(1+\frac{\kappa_c}{\sqrt{\kappa_{1,G}}}\right) +\frac{M+\kappa_{\nabla f}}{\sqrt{\kappa_{1,G}}} \eqqcolon \hat{\mu}_5.
\end{equation*}	
Combining all the results above by defining $\tilde{\mu} = \max\{\hat{\mu}_1,\ldots,\hat{\mu}_5\}$, we know \eqref{eq:threshold_Predk} is satisfied if $\barmu_k\geq\tilde{\mu}$. Since $\barmu_k$ is increased by a factor of $\rho$ for each update, this result suggests that $\barmu_k\leq \rho\tilde{\mu}\eqqcolon\hatmu$. This completes the proof.

\end{document}